\documentclass[12pt,reqno]{amsart}

\usepackage[T1]{fontenc}
\usepackage[utf8]{inputenc}
\usepackage[expansion=false]{microtype}

\usepackage{amsmath,amssymb,amsthm}
\usepackage{mathtools}
\usepackage{bm}

\usepackage[a4paper,margin=1in]{geometry}

\usepackage[dvipsnames]{xcolor}
\usepackage[
 colorlinks = true,
 linkcolor = Mahogany,
 citecolor = ForestGreen,
 urlcolor = NavyBlue,
 pagebackref = true,
 bookmarks = true,
 pdfencoding = auto,
 psdextra
]{hyperref}
\usepackage{cleveref}

\theoremstyle{plain}
\newtheorem{theorem}{Theorem}[section]
\newtheorem{proposition}[theorem]{Proposition}
\newtheorem{lemma}[theorem]{Lemma}
\newtheorem{corollary}[theorem]{Corollary}

\newtheorem{theorema}{Theorem}

\theoremstyle{definition}

\theoremstyle{remark}
\newtheorem{remark}{Remark}[section]

\numberwithin{equation}{section}
\allowdisplaybreaks
\DeclareMathOperator{\curl}{curl}

\begin{document}

\title[A Fourier approach to higher and fractional order HUPs]
{A Fourier approach to the sharp stability of   Heisenberg
Uncertainty Principles of higher and fractional orders: a new perspective}

\author{Anh Xuan Do}
\address{Anh Xuan Do: Department of Mathematics, University of Connecticut, Storrs, CT
06269, USA}
\email{anh.do@uconn.edu}

\author{Nguyen Lam}
\address{Nguyen Lam: School of Science and the Environment, Grenfell Campus, Memorial
University of Newfoundland, Corner Brook, NL A2H5G4, Canada}
\email{nlam@mun.ca}

\author{Guozhen Lu}
\address{Guozhen Lu: Department of Mathematics, University of Connecticut, Storrs, CT
06269, USA}
\email{guozhen.lu@uconn.edu}

\author{Van Hoang Nguyen}
\address{Van Hoang Nguyen: Department of Mathematics, FPT University, Ha Noi, Vietnam}
\email{vanhoang0610@yahoo.com,~hoangnv47@fe.edu.vn}

\date{\today}
\subjclass[2020]{26D10, 26D15, 46E35, 42B10, 33C45}

\keywords{Heisenberg Uncertainty Principle, fractional Laplacian,
Caffarelli--Kohn--Nirenberg inequalities, stability estimates, sharp constants,
weighted Poincar\'e inequalities, Fourier transform, spherical harmonics,
Hecke--Bochner formula}

\begin{abstract}
We show that, on the Fourier side, the sharp second order Heisenberg Uncertainty
Principle (HUP) and its sharp stability for functions are nothing but the first order
$L^{2}$-Caffarelli--Kohn--Nirenberg inequality and its sharp stability, applied to their Fourier transforms. This gives a proof in a few lines of the sharp stability estimate that we
established recently by a much longer argument. The same observation then produces two
one-parameter families of sharp fractional Heisenberg Uncertainty Principles, in which
the gradient is replaced by a fractional power of the Laplacian: for each of them we
compute the sharp constants, characterize all the optimizers, and  establish the
sharp stability estimates.  The stability constant of the first family is equal to
$1$ for every nonnegative order and every dimension. Negative orders, where the
fractional Laplacian becomes a Riesz potential, are treated as well. Finally, we show
that the deficit carries much more information than the distance to the optimizers
alone: it controls an explicit chain of successive remainder terms whose constants are
the successive spectral gaps of an explicit operator. For the classical Heisenberg
Uncertainty Principle this gives three remainder terms with optimal explicit constants, and for the
second order HUP a chain of four remainder terms measured against explicit confluent
hypergeometric profiles and coupled through one single set of parameters. As applications, we also establish the chain of stability of the HUP for  curl-free vector fields.
\end{abstract}

\maketitle

%-----------------------------------------------------------------------------------

\section{Introduction and statement of the main results}

\subsection{The \texorpdfstring{$L^{2}$}{L2}-Caffarelli--Kohn--Nirenberg inequalities}

The starting point of this paper is the following $L^{2}$-Caffarelli--Kohn--Nirenberg
(CKN) inequality:
\begin{equation}\label{CKN}
\left( \int_{\mathbb{R}^{N}} \frac{|\nabla u|^{2}}{|x|^{2b}}\,dx \right)^{\frac{1}{2}}
\left( \int_{\mathbb{R}^{N}} \frac{|u|^{2}}{|x|^{2a}}\,dx \right)^{\frac{1}{2}}
\geq C(N,a,b)
\int_{\mathbb{R}^{N}} \frac{|u|^{2}}{|x|^{a+b+1}}\,dx,
\quad
u \in C_{0}^{\infty}(\mathbb{R}^{N} \setminus \{0\}).
\end{equation}
We note that the power $a+b+1$ in the right-hand integral is essential: it is precisely
the exponent dictated by the scaling invariance of \eqref{CKN}, and any other power
would break this invariance and prevent the inequality from holding with a uniform
constant. This family \eqref{CKN} encompasses several celebrated inequalities, such as
the Heisenberg Uncertainty Principle (corresponding to $a=-1$ and $b=0$), the hydrogen
uncertainty principle ($a=b=0$), and the classical Hardy inequality ($a=1$ and $b=0$),
each of which plays a fundamental role in analysis and in quantum mechanics. Moreover,
\eqref{CKN} is a special case of the more general Caffarelli--Kohn--Nirenberg
inequality, introduced by Caffarelli, Kohn, and Nirenberg in \cite{CKN}.

The optimal constant $C(N,a,b)>0$ in \eqref{CKN} was first investigated by Costa
\cite{Cos08} in a restricted parameter regime via the expanding-the-square method. This
analysis was subsequently extended to the full admissible range of parameters by
Catrina and Costa \cite{CC09}, using spherical harmonic decomposition and a Kelvin-type
transform. More recently, alternative direct proofs of the sharp constant $C(N,a,b)$,
valid over the entire parameter range and together with a complete characterization of
all optimizers, have been established in \cite{CFL21}. More precisely, the main
findings in \cite{CC09, CFL21, Cos08} can be summarized as follows.

\begin{theorema}\label{TA}
For all admissible parameters $(a,b)$, one has
\[
C(N,a,b)
= \max \left\{
\frac{|N-(a+b+1)|}{2}, \,
\frac{|N-(3b - a + 3)|}{2}
\right\}.
\]
Moreover, the sharp constant and the corresponding extremal functions are characterized
as follows.
\begin{enumerate}
\item If $(a,b) \in \mathcal{A}$, then
$C(N,a,b) = \frac{|N-(a+b+1)|}{2}$, and equality in \eqref{CKN} is attained precisely
by the functions
\[
u(x) = D \exp\!\left( \frac{t |x|^{\,b+1-a}}{b+1-a} \right),
\]
where $D \neq 0$, $t < 0$ in $\mathcal{A}_{1}$, and $t > 0$ in $\mathcal{A}_{2}$.

\item If $(a,b) \in \mathcal{B}$, then
$C(N,a,b) = \frac{|N-(3b - a + 3)|}{2}$, and equality in \eqref{CKN} is attained
precisely by the functions
\[
u(x) = D\, |x|^{\,2(b+1)-N}
\exp\!\left( \frac{t |x|^{\,b+1-a}}{b+1-a} \right),
\]
where $D \neq 0$, $t > 0$ in $\mathcal{B}_{1}$, and $t < 0$ in $\mathcal{B}_{2}$.

\item The only parameters for which the best constant is not attained are those on the
line $a=b+1$, where $C(N,b+1,b) = \frac{|N-2(b+1)|}{2}$.
\end{enumerate}
Here the regions are defined by
\[
\left\{
\begin{array}{ll}
\mathcal{A}_{1}:= \{ (a,b) \mid b+1-a > 0,\; b \le (N-2)/2 \}, \\[5pt]
\mathcal{A}_{2}:= \{ (a,b) \mid b+1-a < 0,\; b \ge (N-2)/2 \}, \\[5pt]
\mathcal{A}:= \mathcal{A}_{1} \cup \mathcal{A}_{2}, \\[5pt]
\mathcal{B}_{1}:= \{ (a,b) \mid b+1-a < 0,\; b \le (N-2)/2 \}, \\[5pt]
\mathcal{B}_{2}:= \{ (a,b) \mid b+1-a > 0,\; b \ge (N-2)/2 \}, \\[5pt]
\mathcal{B}:= \mathcal{B}_{1} \cup \mathcal{B}_{2}.
\end{array}
\right.
\]
\end{theorema}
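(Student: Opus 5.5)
The plan is to treat each regime by a weighted ``expanding the square'' identity, using the Kelvin-type transform to move from $\mathcal{A}$ to $\mathcal{B}$. Set $\gamma:=b+1-a$ and $c:=a+b+1$. For $t\in\mathbb{R}$ and $u\in C_0^\infty(\mathbb{R}^N\setminus\{0\})$ (real-valued first; complex $u$ is handled by splitting into real and imaginary parts), I would expand
\[
0\le \int_{\mathbb{R}^N}\Bigl|\frac{\nabla u}{|x|^{b}}-t\,\frac{x}{|x|}\,|x|^{-a}u\Bigr|^2dx .
\]
The cross term is $-2t\int |x|^{-(a+b)}\frac{x}{|x|}\cdot u\nabla u\,dx$. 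Since $\operatorname{div}\bigl(x|x|^{-c}\bigr)=(N-c)|x|^{-c}$, it equals $t(N-c)\int |u|^2|x|^{-c}dx$. This gives, for every $t$,
\[
\int\frac{|\nabla u|^2}{|x|^{2b}}+t^2\int\frac{|u|^2}{|x|^{2a}}\ \ge\ -t(N-c)\int\frac{|u|^2}{|x|^{c}} .
\]
Choosing the sign of $t$ appropriately and optimizing $|t|$ (equivalently, applying AM--GM after scaling) yields \eqref{CKN} with constant $|N-c|/2$ for all admissible $(a,b)$. This already gives $C(N,a,b)\ge |N-(a+b+1)|/2$.

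Equality forces $\nabla u=t\,|x|^{b-a}\frac{x}{|x|}u$, i.e. $u=D\exp\bigl(t|x|^{\gamma}/\gamma\bigr)$. This profile lies in the natural weighted space exactly when the integrals converge at $0$ and $\infty$. At infinity, convergence requires $t\gamma<0$. Near $0$ (resp. at infinity when $\gamma<0$), convergence requires the weights to be integrable, which is the condition on $b$ relative to $(N-2)/2$, together with the correct sign of $t$ relative to $N-c$. Checking these conditions gives exactly $\mathcal{A}_1$ with $t<0$ and $\mathcal{A}_2$ with $t>0$. The density argument passes the inequality from $C_0^\infty(\mathbb{R}^N\setminus\{0\})$ to the completion containing these profiles.

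For $\mathcal{B}$, I would substitute $u=|x|^{2(b+1)-N}v$. This is the Kelvin-type change that exchanges the roles of the two exponents. I would compute $\int|\nabla u|^2|x|^{-2b}$ in terms of $v$: the cross term integrates by parts into a multiple of $\int|v|^2|x|^{\dots}$, which leaves $\int|\nabla v|^2|x|^{-2b'}$ plus a Hardy-type term. A cleaner route is to expand the square directly with the modified vector field
\[
\nabla u-\Bigl(\frac{2(b+1)-N}{|x|}+t|x|^{b-a}\Bigr)\frac{x}{|x|}u .
\]
The radial-power part produces a term proportional to $\int|u|^2|x|^{-2b-2}$ whose coefficient must be handled. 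I would instead run the same identity in the variables $v$ and check that the new constant is $|N-(3b-a+3)|/2$, with optimizers $|x|^{2(b+1)-N}\exp(t|x|^\gamma/\gamma)$ and admissible sign conditions producing $\mathcal{B}_1,\mathcal{B}_2$.

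\textbf{Main obstacle: sharpness of the maximum.} The remaining step, which I expect to be the hardest, is to show that the constant is exactly the maximum of the two values and not larger. It also requires determining when that maximum is attained. For the upper bound, I would use the attained optimizers in each region: they give equality, so the constant cannot exceed the value achieved. One must also confirm that in $\mathcal{A}$ one has $|N-c|/2\ge|N-(3b-a+3)|/2$, and the reverse in $\mathcal{B}$. This follows from $(N-c)^2-(N-3b+a-3)^2=4\gamma(N-2-2b)$ and the sign conventions defining the regions. Rigorously ruling out a larger constant beyond the lower bound may require spherical harmonic decomposition. On each spherical mode the problem reduces to a one-dimensional weighted inequality, and the radial mode is the extremal one; this is the Catrina--Costa route. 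On the line $a=b+1$ the exponential degenerates to a pure power $|x|^{t}$, which is never in the space. Here I would show non-attainment by noting that equality would force a power function, and show sharpness by a sequence of truncated powers $|x|^{b+1-N/2}$ cut off logarithmically.
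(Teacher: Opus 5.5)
The step that fails is your criterion for which profiles are admissible. The rest of your plan is correct: the expanded square, the ODE in the equality case, the identity $(N-c)^2-(N-(3b-a+3))^2=4\gamma(N-2-2b)$, and the treatment of the line $a=b+1$.

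\textbf{The gap.} You assert that $D\exp(t|x|^{\gamma}/\gamma)$ lies in the natural space exactly when the integrals converge at $0$ and $\infty$, and that this integrability encodes $b\le\frac{N-2}{2}$. That is false. Take $N=3$, $(a,b)=(0,1)\in\mathcal{B}_2$ and $u=e^{-|x|^2/2}$.
\begin{itemize}
\item All three integrals are finite: $A=B=\pi^{3/2}$ and $M=2\pi^{3/2}$.
\item The boundary terms in your integration by parts vanish, and the square is identically zero for $t=-1$.
\item Hence $u$ gives equality with the constant $\frac12=\frac{|N-(a+b+1)|}{2}$, yet the sharp constant at this point is $\frac32$.
\end{itemize}
With your criterion, the upper-bound step (``an optimizer gives equality, so the constant cannot be larger'') would yield the false bound $C(3,0,1)\le\frac12$. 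The region assignment you claim to ``check'' does not come out of integrability.

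\textbf{The fix.} What is needed is membership in the completion of $C_0^\infty(\mathbb{R}^N\setminus\{0\})$, and that is a capacity condition.
\begin{itemize}
\item In $\mathcal{A}_1$ the profile equals $D\neq0$ at the origin. It belongs to the completion iff the origin has zero capacity for $\int|\nabla\eta|^2|x|^{-2b}\,dx$, i.e. iff $\int_0 r^{2b-N+1}\,dr=\infty$, i.e. iff $b\le\frac{N-2}{2}$.
\item In $\mathcal{A}_2$ the profile tends to $D$ at infinity. (For $\gamma<0$, your condition $t\gamma<0$ is a condition at the origin, not at infinity.) Here one needs $\int^\infty r^{2b-N+1}\,dr=\infty$, i.e. $b\ge\frac{N-2}{2}$.
\end{itemize}
This is exactly the dichotomy behind the regions; compare the capacity discussion in Section \ref{Sprelim}. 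For $b>\frac{N-2}{2}$ every element of the completion vanishes at the origin, which is why $e^{-|x|^2/2}$ is not a competitor in the example above.

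\textbf{The rest, once this is fixed.} It closes more easily than you fear.
\begin{itemize}
\item In the substitution $u=|x|^{\kappa}v$ with $\kappa=2(b+1)-N$ there is no leftover Hardy term. Since $N+2\kappa-2b-2=\kappa$, the cross term equals $-\kappa^{2}\int|v|^{2}|x|^{2\kappa-2b-2}$ and cancels the $\kappa^2$ term exactly.
\item Consequently the three integrals for $(a,b)$ in $u$ equal those for $(a',b')=(a+N-2-2b,\,N-2-b)$ in $v$. This gives $b'+1-a'=\gamma$, $|N-(a'+b'+1)|=|N-(3b-a+3)|$, $\mathcal{B}_1\leftrightarrow\mathcal{A}_2$ and $\mathcal{B}_2\leftrightarrow\mathcal{A}_1$.
\item The map is a norm-preserving bijection of $C_0^\infty(\mathbb{R}^N\setminus\{0\})$, so it transports constants, completions and optimizers.
\item Both lower bounds therefore hold for every $(a,b)$. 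Every $(a,b)$ with $a\neq b+1$ lies in $\mathcal{A}\cup\mathcal{B}$, where one branch is attained in the completion, so $C(N,a,b)=\frac{|N-2-2b|+|b+1-a|}{2}$.
\item No spherical harmonics are needed for the upper bound.
\end{itemize}

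\textbf{Comparison with the paper.} The paper does not prove Theorem \ref{TA}; it quotes it from Costa, Catrina--Costa and Cazacu--Flynn--Lam. What it reproduces is your expanded square on $\mathcal{A}_1$, in the form of the remainder identity of Lemma \ref{LCKNidentity}, together with the capacity explanation above.
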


We note that the two families of inequalities that we study in this paper correspond to
the parameters $(a,b)=(-s,0)$ with $s>-1$ and $(a,b)=(-s,-1)$ with $s>0$. In these
ranges, both of them belong to $\mathcal{A}_{1}$. For the negative fractional orders, they enter
the region $\mathcal{B}_{1}$, and, as we will see in Theorem \ref{Tfracneg} below, the
constant switches to the second branch of the maximum, while the optimizers change
shape. Both cases of Theorem \ref{TA} will therefore be used.

\subsection{Sharp stability of the \texorpdfstring{$L^{2}$}{L2}-CKN inequalities}

In \cite{BL85}, Brezis and Lieb raised a fundamental question concerning the sharp
Sobolev inequality in $W^{1,2}(\mathbb{R}^{N})$: can the \emph{deficit}, namely the
difference between the energy and the optimal constant times the critical
$L^{2^{*}}$-norm, be bounded below by a positive quantity comparable to the square of
the distance from $u$ to the manifold of optimizers? This question gave rise to a broad
and influential research program on the stability of functional and geometric
inequalities. Understanding such stability is important, since it not only refines
classical sharp inequalities by revealing how close a function is to achieving
equality, but also provides deep insights into the underlying geometry, rigidity
phenomena, and concentration behavior of extremal sequences. For interested readers, we
refer to \cite{BDNN20, BGKM25, BGKM25b, Carlen, CF13, CFMP09,  CLT23, CLT24, CLT242, CLT243,
CLTW, DEFFL, FN19, FZ22, LLR25, LLR26, VHN16, VHN19}; this list is far from
exhaustive.

Brezis and Lieb's question was solved affirmatively by Bianchi and Egnell \cite{BE91},
who established the stability estimate
\begin{equation}\label{StabilityS}
\int_{\mathbb{R}^{N}} |\nabla u|^{2}\,dx
- S_{N}\!\left(
\int_{\mathbb{R}^{N}} |u|^{\frac{2N}{N-2}}\,dx
\right)^{\!\frac{N-2}{N}}
\ge c_{\mathrm{BE}}\!
\inf_{U \in E_{\mathrm{Sob}}} \!
\int_{\mathbb{R}^{N}} |\nabla(u - U)|^{2}\,dx,
\end{equation}
where $S_{N}$ is the optimal Sobolev constant, $E_{\mathrm{Sob}}$ denotes the set of extremal
functions, and $c_{\mathrm{BE}}>0$ is a universal stability constant. This result shows that the
Sobolev deficit controls the distance in the gradient norm to the manifold of
optimizers, and that it is optimal in the underlying metrics and exponents.

However, the proof in \cite{BE91} is by contradiction, and therefore it does not
provide any information about the value of $c_{\mathrm{BE}}$. In fact, despite extensive
research, the value and the attainability of the stability constants have long remained
elusive. Let us describe here the situation for $c_{\mathrm{BE}}$, which is by now well
understood. On the one hand, a local analysis around the manifold of optimizers shows
that the best constant in the local version of \eqref{StabilityS} is the spectral gap
constant $\frac{4}{N+4}$; see \cite{CFW13}. On the other hand, K\"onig \cite{Kon23}
proved that $c_{\mathrm{BE}}$ is strictly smaller than $\frac{4}{N+4}$, and that it is attained.
In other words, the optimal constant in \eqref{StabilityS} is never given by the local
analysis. Quantitative lower bounds for $c_{\mathrm{BE}}$ were obtained only recently: Dolbeault,
Esteban, Figalli, Frank and Loss \cite{DEFFL} initiated a systematic analysis of the
Bianchi--Egnell constant, deriving explicit lower bounds for $c_{\mathrm{BE}}$ that are sharp in
the asymptotic regime $N \to \infty$, and establishing at the same time the global
stability of the Gaussian logarithmic Sobolev inequality via a gradient flow approach.
Subsequently, Chen, Lu and Tang \cite{CLT24, CLT242, CLT243} obtained explicit lower
bounds for the stability constants of the Hardy--Littlewood--Sobolev and of the higher
order and fractional Sobolev inequalities, with the optimal asymptotic behavior and
with dimension-dependent and order-dependent constants, which also lead to the global stability of Beckner's
logarithmic Sobolev inequality on the sphere. In a related development, Chen, Lu, Tang and Wang
\cite{CLTW} established the asymptotically sharp stability of the Sobolev inequality on
the Heisenberg group. Since the classical rearrangement and flow arguments are not
available in that setting, they introduced a new approach based on the CR Yamabe flow,
which allows one to pass from the local stability to the global one.

Let us now come back to the $L^{2}$-CKN inequality \eqref{CKN}. Its stability was first
addressed in
\cite{MV21} in the particular case $a=-1$ and $b=0$, corresponding to the Heisenberg
Uncertainty Principle (HUP). In their study, McCurdy and Venkatraman employed the
concentration--compactness framework, and therefore no information on the constants was
provided. A constructive approach was later developed by Fathi \cite{Fat21}, who
obtained explicit but non-optimal constants. Eventually, Cazacu, Flynn, Lam and Lu \cite{CFLL24}
established a sharp and fully quantitative stability estimate for the HUP: for all
$u \in C_{0}^{\infty}(\mathbb{R}^{N})$,
\begin{equation}\label{HUPstab}
\left(\int_{\mathbb{R}^{N}} |\nabla u|^{2}\,dx\right)^{\frac{1}{2}}
\left(\int_{\mathbb{R}^{N}} |x|^{2} |u|^{2}\,dx\right)^{\frac{1}{2}}
- \frac{N}{2} \int_{\mathbb{R}^{N}} |u|^{2}\,dx
\ge \inf_{v \in E_{\mathrm{HUP}}} \int_{\mathbb{R}^{N}} |u-v|^{2}\,dx,
\end{equation}
where
\[
E_{\mathrm{HUP}}:= \{\alpha e^{-\beta |x|^{2}}: \alpha \in \mathbb{R},\, \beta > 0\}
\]
denotes the family of Gaussian optimizers of the HUP. Moreover, the constant $1$ is
sharp and can be attained. See also Lam, Lu and Russanov \cite{LLR25, LLR26} for related improvements, and
Lam, Lodha, Lu and Sengupta \cite{LLLS25, LLLS26} for the Heisenberg Uncertainty Principle and for the whole
$L^{2}$-CKN family on half-spaces and orthants, together with their stability.

The approach in \cite{CFLL24} is based on two ideas: the explicit computation of the
remainder term in the HUP, and the use of the sharp Gaussian Poincar\'e inequality.
Because the latter is available only for log-concave measures, the stability estimates
derived in \cite{CFLL24} for the full family \eqref{CKN} were restricted to a rather
small set of parameters. Very recently, in \cite{DLLN26}, we removed all these
restrictions and computed the sharp stability constants for the whole family
\eqref{CKN}. More precisely, let us define the $L^{2}$-CKN deficit by
\begin{equation}\label{L2CKNdeficit}
\delta_{2}(v):= \left( \int_{\mathbb{R}^{N}} \frac{|\nabla v|^{2}}{|x|^{2b}}\,dx
\right)^{\frac{1}{2}}
\left( \int_{\mathbb{R}^{N}} \frac{|v|^{2}}{|x|^{2a}}\,dx \right)^{\frac{1}{2}}
- C(N,a,b) \int_{\mathbb{R}^{N}} \frac{|v|^{2}}{|x|^{a+b+1}}\,dx \geq 0.
\end{equation}
Then the following holds.

\begin{theorema}\label{TB}
Let $N \geq 1$ and $a,b \in \mathbb{R}$ with $b \leq \frac{N-2}{2}$, and define
\[
C_{\mathrm{PI}}(a,b,N) = \min\left\{ 2(1+b-a),\, \sqrt{(N-2-2b)^{2} + 4(N-1)} - (N-2-2b) \right\}.
\]
\begin{enumerate}
\item If $1+b-a>0$, then
\[
\delta_{2}(v) \geq \frac{C_{\mathrm{PI}}(a,b,N)}{2}
\inf_{c \in \mathbb{R},\, \lambda>0}
\int_{\mathbb{R}^{N}}
\frac{\left|v-c\,e^{-\frac{\lambda|x|^{b+1-a}}{b+1-a}}\right|^{2}}{|x|^{1+b+a}}\,dx.
\]
\item If $1+b-a<0$, then
\[
\delta_{2}(v) \geq \frac{C_{\mathrm{PI}}(-a+2b+2,b,N)}{2}
\inf_{c \in \mathbb{R},\, \lambda>0}
\int_{\mathbb{R}^{N}}
\frac{\left|v-c\,|x|^{2b+2-N}e^{\frac{\lambda|x|^{b+1-a}}{b+1-a}}\right|^{2}}
{|x|^{1+b+a}}\,dx.
\]
\end{enumerate}
In both cases the constant is sharp and can be attained. Similar statements, also with
explicit sharp constants, hold in the two remaining regimes $b \geq \frac{N-2}{2}$.
\end{theorema}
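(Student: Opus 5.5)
Since $1+b-a\neq 0$ in both cases of the theorem, I set $\gamma:=b+1-a$ and look for an identity that turns the deficit into a weighted Poincar\'e inequality. The plan is in four steps.

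\textbf{Step 1: the remainder identity.} For $v$ real and $\lambda>0$, I expand the square:
\[
\int_{\mathbb{R}^N}\frac{|\nabla v+\lambda |x|^{b-a-1}x\,v|^2}{|x|^{2b}}\,dx
=\int_{\mathbb{R}^N}\frac{|\nabla v|^2}{|x|^{2b}}\,dx
+\lambda^2\int_{\mathbb{R}^N}\frac{|v|^2}{|x|^{2a}}\,dx
-\lambda(N-a-b-1)\int_{\mathbb{R}^N}\frac{|v|^2}{|x|^{a+b+1}}\,dx .
\]
The cross term is integrated by parts, using $\operatorname{div}(x|x|^{-a-b-1})=(N-a-b-1)|x|^{-a-b-1}$. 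I then choose $\lambda$ so that the first two terms on the right are balanced, i.e. $\lambda^2\int|v|^2|x|^{-2a}=\int|\nabla v|^2|x|^{-2b}$. In the regime $b\le \frac{N-2}{2}$, $\gamma>0$ we have $N-a-b-1>0$, and the identity becomes
\[
2\lambda\,\delta_2(v)=\int_{\mathbb{R}^N}\frac{|\nabla v+\lambda|x|^{b-a-1}xv|^2}{|x|^{2b}}\,dx .
\]

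\textbf{Step 2: the ground-state substitution.} I write $v=w\,e^{-\lambda|x|^{\gamma}/\gamma}$, so the right side becomes $\int|\nabla w|^2|x|^{-2b}e^{-2\lambda|x|^\gamma/\gamma}\,dx$. It then suffices to prove the weighted Poincar\'e inequality
\[
\int_{\mathbb{R}^N}\frac{|\nabla w|^2}{|x|^{2b}}\,d\mu_\lambda\ \ge\ \lambda\,C_{\mathrm{PI}}\inf_c\int_{\mathbb{R}^N}\frac{|w-c|^2}{|x|^{a+b+1}}\,d\mu_\lambda,
\qquad d\mu_\lambda=e^{-2\lambda|x|^\gamma/\gamma}dx.
\]
Dividing by $2\lambda$ gives exactly the stated bound with constant $C_{\mathrm{PI}}/2$, and the $\lambda$ here is admissible in the infimum.

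\textbf{Step 3: the spectral gap (the main obstacle).} A scaling $x\mapsto\lambda^{-1/\gamma}x$ reduces to $\lambda=1$. I decompose $w$ into spherical harmonics $w=\sum_k f_k(r)Y_k$. The mode $k=0$ is a one-dimensional Sturm--Liouville problem. In the variable $t=r^\gamma$ I expect the operator to become a Laguerre operator, with first nonzero eigenvalue $2\gamma$ and eigenfunction linear in $t=r^\gamma$. For $k=1$ the extra term $(N-1)r^{-2}|x|^{-2b}$ appears, and the ground state should be $r^{\alpha}$ times a Laguerre factor. Here $\alpha$ solves $\alpha(\alpha+N-2-2b)=N-1$, which yields $\sqrt{(N-2-2b)^2+4(N-1)}-(N-2-2b)$. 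Modes with $k\ge2$ give larger eigenvalues by monotonicity in $k(k+N-2)$. The difficulty is to rigorously identify these bottom eigenvalues: I would verify explicit positive eigenfunctions and use a ground-state (Picone) argument for each mode. The minimum of the two values is $C_{\mathrm{PI}}$, and sharpness follows from the corresponding eigenfunctions.

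\textbf{Step 4: case (2).} For $\gamma<0$ I apply the Kelvin-type change $v=|x|^{2b+2-N}\tilde v$. This maps the problem to parameters $(-a+2b+2,b)$ with positive $\gamma$, which explains the form of the constant and of the optimizers. A direct check that $\delta_2$ and the weighted distance transform consistently would complete the proof. The regimes with $b\ge\frac{N-2}{2}$ are handled similarly, with the sign of $N-2-2b$ reversed.
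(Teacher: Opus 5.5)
Your Steps 1--3 follow the same route as the paper for the inequality. The paper only cites \cite{DLLN26} for Theorem~\ref{TB}, but its Lemma~\ref{LCKNidentity}, Proposition~\ref{Pfourierstab} and the node-by-node estimates of Lemmas~\ref{Lfacts2} and~\ref{Llaguerre} are exactly your remainder identity, your ground-state substitution with rescaling, and your spectral gap. Two small corrections:
\begin{itemize}
\item On the radial node, a Picone/ground-state argument with a positive eigenfunction only gives the bottom eigenvalue $0$, which corresponds to the constants. The gap $2\gamma$ after removing constants genuinely needs the Laguerre expansion, as in Lemma~\ref{Llaguerre}.
\item In Step~4, multiplying by $|x|^{2b+2-N}$ alone does not send the weights $|x|^{-2a}$ and $|x|^{-(a+b+1)}$ to those of $(2b+2-a,b)$. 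You need the genuine Kelvin transform $v(x)=|x|^{\beta}\tilde v(x/|x|^{2})$ with $\beta=2b+2-N$. Under it all three integrals are preserved, because the extra radial term in the gradient integral is $\beta(\beta+N-2-2b)\int\rho^{N-2b-3}|\tilde v|^{2}$, which vanishes. The competitor families also correspond under this map.
\end{itemize}

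The genuine gap is the sentence ``sharpness follows from the corresponding eigenfunctions.'' Work with the balanced normalization $\mu(v)=1$, which is allowed by dilation invariance. A Poincar\'e extremal $W$ gives equality in the stability estimate only if two things hold: $v=We^{-|x|^{\gamma}/\gamma}$ really has $\mu(v)=1$, and $\lambda=1$ is optimal in $\inf_{c,\lambda}$.

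When $C_{\mathrm{PI}}=D:=\sqrt{(N-2-2b)^{2}+4(N-1)}-(N-2-2b)$, this works. The function $v=(1+\varepsilon|x|^{\alpha}\phi_{1,i})e^{-|x|^{\gamma}/\gamma}$ is balanced for every $\varepsilon$ precisely because $\alpha(\alpha+N-2-2b)=N-1$. The perturbation is orthogonal to all radial competitors, so equality holds, and this gives sharpness and attainment whenever $D\le 2(1+b-a)$.

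When $2\gamma<D$, the extremal is radial, $W=c+d|x|^{\gamma}$, and this is exactly the $\partial_{\lambda}$ direction of the optimizer family. Balancing ($A=B$) forces $d=-2c/(\gamma+N-2-2b)$, so $W$ is the first Laguerre polynomial and is orthogonal to constants. Writing $\|\cdot\|$ for the norm of $L^{2}(|x|^{-(a+b+1)}dx)$:
\begin{itemize}
\item $v\perp e^{-|x|^{\gamma}/\gamma}$ in this space;
\item but $\langle v,e^{-\lambda|x|^{\gamma}/\gamma}\rangle$ is a positive multiple of $1-\frac{2}{1+\lambda}\neq 0$ for $\lambda\neq1$;
\item hence $\delta_{2}(v)=\gamma\|v\|^{2}>\gamma\,d(v)^{2}$.
\end{itemize}
Running your own chain of inequalities shows more. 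It is strict on the non-radial nodes since $D_{k}\ge D>2\gamma$, and the Laguerre equality holds only on $\mathrm{span}\{1,|x|^{\gamma}\}$. As above, $\lambda=1$ is then strictly suboptimal. So equality with constant $\gamma=C_{\mathrm{PI}}/2$ is impossible for every non-optimizer in this regime. The paper's Remark~\ref{Rspectral} points out the same absorption of the radial direction by $\lambda$ (``the test is empty'').

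Consequently, your argument cannot yield the ``sharp and attained'' part when $2(1+b-a)<D$. The claim of attainment actually fails there, and sharpness would need a genuinely different argument, which the paper does not supply either.
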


In \eqref{L2CKNdeficit} we do not display the dependence of $\delta_{2}$ on the
parameters $(a,b)$, since these will always be clear from the context. Let us also note
that Theorem \ref{TB} is stated above for real-valued $v$, but that the same statement
holds for complex-valued $v$, with the infimum taken over $c \in \mathbb{C}$. Indeed,
as we will see in Lemma \ref{LCKNidentity} and Proposition \ref{Pfourierstab}, the
proof relies only on a weighted
Poincar\'e inequality and on an exact remainder identity, both of which are valid for
complex-valued functions and complex parameters. It is worth noting that Theorem
\ref{TB} is a consequence of a family of sharp weighted $L^{2}$-Poincar\'e inequalities
with Gaussian type measures, which were also established in \cite{DLLN26}; see also Do, Flynn, Lam and Lu
\cite{DFLL23} for the corresponding $L^{p}$ theory. These weighted Poincar\'e inequalities play here the role that the classical
Gaussian Poincar\'e inequality plays in \cite{CFLL24}. We will use these tools
systematically in this paper.

\subsection{The second order Heisenberg Uncertainty Principle and our new approach.}\label{Spov}

Motivated by a question of Maz'ya \cite{Maz18} concerning the best constant in the HUP
for divergence-free vector fields, the authors in \cite{CFL22, CFL23} proved the
following sharp HUP for curl-free vector fields: for
$\mathbf{U} \in \left(C_{0}^{\infty}(\mathbb{R}^{N})\right)^{N}$ with
$\curl \mathbf{U}=0$, there holds
\begin{equation}\label{HUPcurl}
\left(\int_{\mathbb{R}^{N}} |\nabla \mathbf{U}|^{2}\,dx\right)^{\frac{1}{2}}
\left(\int_{\mathbb{R}^{N}} |x|^{2} |\mathbf{U}|^{2}\,dx\right)^{\frac{1}{2}}
\geq \frac{N+2}{2} \int_{\mathbb{R}^{N}} |\mathbf{U}|^{2}\,dx.
\end{equation}
Indeed, in this case we can write $\mathbf{U}=\nabla u$ for some scalar potential $u$,
and therefore \eqref{HUPcurl} is equivalent to the following second order Heisenberg
Uncertainty Principle:
\begin{equation}\label{2HUP}
\left(\int_{\mathbb{R}^{N}} |\Delta u|^{2}\,dx\right)^{\frac{1}{2}}
\left(\int_{\mathbb{R}^{N}} |x|^{2} |\nabla u|^{2}\,dx\right)^{\frac{1}{2}}
\geq \frac{N+2}{2} \int_{\mathbb{R}^{N}} |\nabla u|^{2}\,dx,
\end{equation}
where the constant $\frac{N+2}{2}$ improves the best constant $\frac{N}{2}$ that
corresponds to the scalar case. Moreover, the equality in \eqref{2HUP} is attained
exactly by the Gaussian profiles
\[
E_{\mathrm{SHUP}}:= \{\alpha e^{-\beta |x|^{2}}: \alpha \in \mathbb{R},\, \beta>0\}.
\]
Maz'ya's question itself was answered in dimension two in \cite{CFL22} and in all dimensions $N\ge3$ by Hamamoto \cite{Ham23}. The sharp stability of the resulting inequality for divergence-free fields is established by Do, Duong, Lam, Lu and Nguyen in \cite{DDLLNsol}.

Throughout this paper, $X$ denotes the completion of $C_{0}^{\infty}(\mathbb{R}^{N})$
under the seminorm
\[
\left(\int_{\mathbb{R}^{N}}|\Delta u|^{2}\,dx
+\int_{\mathbb{R}^{N}}|x|^{2}|\nabla u|^{2}\,dx\right)^{1/2},
\]
and we denote by
\[
\delta_{\mathrm{S}}(u):= \left(\int_{\mathbb{R}^{N}} |\Delta u|^{2}\,dx\right)^{\frac{1}{2}}
\left(\int_{\mathbb{R}^{N}} |x|^{2} |\nabla u|^{2}\,dx\right)^{\frac{1}{2}}
- \frac{N+2}{2} \int_{\mathbb{R}^{N}} |\nabla u|^{2}\,dx \geq 0
\]
the deficit of the second order HUP \eqref{2HUP}.

It is natural to ask about the stability of \eqref{2HUP}. This problem was first
studied by Duong and Nguyen in \cite{DN25}, by a spectral analysis of an Ornstein--Uhlenbeck type operator
associated with a genuinely second order remainder term, and then by Do, Lam and Lu in \cite{DoLL26},
where we established the sharp stability by a spherical harmonics decomposition combined
with a dimension-shifting identity for radial functions. In \cite{DoLL26}, the sharp
stability constant is the sharp constant of the first spherical mode, and the exact
identity used there to bound it from below by $\sqrt{N^2+4N-4}-N$ becomes an equality
for a Kummer function, so that this bound is its value. See also Duong and Nguyen \cite{DN23, DN26}, and
\cite{HY25}, where this value is also obtained by a linearization method.

\begin{theorema}\label{TC}
For all real-valued $u \in X$, one has
\begin{equation}\label{DoLLstab}
\delta_{\mathrm{S}}(u) \geq \frac{\sqrt{N^{2}+4N-4}-N}{2}
\inf_{u^{*} \in E_{\mathrm{SHUP}}}\int_{\mathbb{R}^{N}} |\nabla(u-u^{*})|^{2}\,dx ,
\end{equation}
and the constant $\frac{\sqrt{N^{2}+4N-4}-N}{2}$ is sharp and can be attained.
\end{theorema}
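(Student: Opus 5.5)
The plan is to pass to the Fourier side, where the second order HUP turns into the first order $L^{2}$-CKN inequality \eqref{CKN}. After that, Theorem \ref{TB} supplies the stability estimate.

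\textbf{Step 1: the dictionary.} With the unitary Fourier transform, set $w=\hat u$. Plancherel gives
\[
\int_{\mathbb{R}^{N}}|\Delta u|^{2}\,dx=\int_{\mathbb{R}^{N}}|\xi|^{4}|w|^{2}\,d\xi,
\qquad
\int_{\mathbb{R}^{N}}|\nabla u|^{2}\,dx=\int_{\mathbb{R}^{N}}|\xi|^{2}|w|^{2}\,d\xi .
\]
Since $\widehat{x_{j}\partial_{k}u}=i\partial_{\xi_{j}}(i\xi_{k}w)$ up to sign, we also get
\[
\int_{\mathbb{R}^{N}}|x|^{2}|\nabla u|^{2}\,dx=\sum_{j,k}\int_{\mathbb{R}^{N}}|\partial_{j}(\xi_{k}w)|^{2}\,d\xi .
\]
This is not directly a weighted gradient of $w$. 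So instead put $\mathbf{V}=\xi w$, which is a curl-free field in $\xi$, and use the radial/angular structure. Alternatively, and more cleanly, take $v=\hat u$ and recall the known identity behind \eqref{HUPcurl}: for curl-free fields, $\int|x|^{2}|\nabla u|^2$ equals $\int|\nabla_\xi(\xi w)|^2$. Expanding,
\[
|\nabla(\xi w)|^{2}=|\xi|^{2}|\nabla w|^{2}+N|w|^{2}+2\,\mathrm{Re}(\bar w\,\xi\cdot\nabla w).
\]
Integrating by parts, $\int 2\mathrm{Re}(\bar w\,\xi\cdot\nabla w)=-N\int|w|^{2}$. Hence
\[
\int|x|^{2}|\nabla u|^{2}\,dx=\int|\xi|^{2}|\nabla w|^{2}\,d\xi .
\]
The deficit therefore reads
\[
\delta_{\mathrm S}(u)=\Big(\int|\xi|^{2}|\nabla w|^{2}\Big)^{1/2}\Big(\int|\xi|^{4}|w|^{2}\Big)^{1/2}-\frac{N+2}{2}\int|\xi|^{2}|w|^{2}.
\]
This is exactly $\delta_{2}(w)$ with $(a,b)=(-2,-1)$. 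Indeed $a+b+1=-2$, and $C(N,-2,-1)=\frac{N+2}{2}$ by Theorem \ref{TA}, since $(-2,-1)\in\mathcal{A}_1$: there $b+1-a=2>0$ and $b\le (N-2)/2$.

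\textbf{Step 2: apply Theorem \ref{TB}(1)}, in its complex-valued version, with $(a,b)=(-2,-1)$. We have $2(1+b-a)=4$ and $N-2-2b=N$. So
\[
C_{\mathrm{PI}}=\min\{4,\sqrt{N^{2}+4N-4}-N\}=\sqrt{N^{2}+4N-4}-N,
\]
because $\sqrt{N^2+4N-4}<N+4$. This gives
\[
\delta_{\mathrm S}(u)\ge\frac{\sqrt{N^2+4N-4}-N}{2}\inf_{c\in\mathbb{C},\lambda>0}\int|\xi|^{2}\,|w-ce^{-\lambda|\xi|^{2}/2}|^{2}\,d\xi .
\]

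\textbf{Step 3: return to physical space.} The Fourier transform of a Gaussian is a Gaussian, so $ce^{-\lambda|\xi|^2/2}=\widehat{u^{*}}$ with $u^{*}=\alpha e^{-\beta|x|^2}$ for some $\alpha\in\mathbb{C}$ and $\beta>0$. By Plancherel the integral equals $\int|\nabla(u-u^{*})|^2dx$.

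For real $u$, we must then restrict to real $\alpha$. Write $u^{*}=(\mathrm{Re}\,\alpha+i\,\mathrm{Im}\,\alpha)e^{-\beta|x|^2}$. Then
\[
|\nabla(u-u^{*})|^2=|\nabla(u-\mathrm{Re}\,u^{*})|^2+|\nabla\,\mathrm{Im}\,u^{*}|^2\ge|\nabla(u-\mathrm{Re}\,u^{*})|^2,
\]
so the real infimum is no larger and the estimate \eqref{DoLLstab} follows.

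\textbf{Step 4: sharpness and attainment.} These transfer back through the Fourier isometry. Take an extremal $w$ of Theorem \ref{TB} that is real and radial (such extremals come from the first spherical mode). Its inverse transform $u$ is real and lies in $X$. Every map used above is an isometric bijection between the relevant classes, so the deficit and the distance are both preserved, and equality is attained.

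\textbf{Main obstacle.} I expect the hard part to be justifying Step 1 on the full space $X$, together with the function-space match. Theorem \ref{TB} is stated for the CKN completion, while $\hat u$ of $u\in C_0^\infty$ is not compactly supported away from $0$. I would first prove everything for Schwartz $u$ whose transform vanishes near the origin, and then pass to $X$ by density. The density has to be checked against the weight $|\xi|^2$ near $0$, which is harmless for $N\ge1$ since $|\xi|^2$ is locally integrable.
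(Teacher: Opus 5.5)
Your Steps 1--3 are the paper's proof of the inequality. The paper also shows $\delta_{\mathrm S}(u)=\delta_2(\widehat u)$ for $(a,b)=(-2,-1)$, up to a factor $(2\pi)^2$ in its normalization. It then applies Theorem~\ref{TB} with $C_{\mathrm{PI}}(-2,-1,N)=D_1$, inverts the Gaussian, and replaces the complex constant by its real part. Two small remarks on this part. The detour through curl-free fields is superfluous, because $\int|x|^{2}|\nabla u|^{2}dx=\sum_{j,k}\int|\partial_{\xi_j}(\xi_k\widehat u)|^{2}d\xi$ is plain Plancherel. For the density step, the relevant property is that the origin has zero capacity for $\int|\nabla v|^{2}|\xi|^{2}d\xi$, which follows from $b=-1\le\frac{N-2}{2}$. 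Local integrability of $|\xi|^2$ is not the right criterion, although your conclusion is correct.

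The gap is in Step~4, which is exactly the sharpness and attainment part of the statement. No radial extremal exists. For radial $v$, Lemma~\ref{LCKNidentity} together with the radial Poincar\'e inequality (Theorem~\ref{TD}, constant $2(1+b-a)=4$) gives $\delta_{2}(v)\ge 2\inf_{c,\lambda}\int|v-ce^{-\lambda|\xi|^{2}/2}|^{2}|\xi|^{2}d\xi$. Since $2>D_{1}/2$, radial functions never reach the constant.

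The value $C_{\mathrm{PI}}=D_{1}<4$ comes from the degree-one spherical mode, which is not radial. An extremal of Theorem~\ref{TB} is $v(\xi)=|\xi|^{\frac{D_{1}}{2}-1}(\mathbf{a}\cdot\xi)e^{-\lambda|\xi|^{2}/2}$ with $\mathbf{a}\in\mathbb{R}^{N}\setminus\{0\}$. For this $v$:
\begin{itemize}
\item one has $\mu=\lambda$ in Lemma~\ref{LCKNidentity}, because $\delta^{2}+N\delta=c_{1}$ for $\delta=D_{1}/2$;
\item the remainder in Lemma~\ref{Lfacts2} vanishes, since $w$ is constant;
\item $v$ is orthogonal to all radial functions in $L^{2}(|\xi|^{2}d\xi)$, so its distance to the Gaussians is $\int|v|^{2}|\xi|^{2}d\xi$.
\end{itemize}

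This $v$ is real and odd, so $\mathcal{F}^{-1}v$ is purely imaginary. Hence two of your claims fail: that the inverse transform of a real extremal is real, and that all the maps are bijections between the relevant classes. The Fourier transform sends real functions to functions with $\widehat{u}(-\xi)=\overline{\widehat{u}(\xi)}$, not to real functions.

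The fix is to take $\widehat{u}=iv$.
\begin{itemize}
\item The deficit and the complex-parameter distance are invariant under multiplication by $i$.
\item $u=\mathcal{F}^{-1}(iv)$ is a real multiple of $\Theta_{1}(x;\beta)\,\mathbf{a}\cdot x$ with $\beta=2\pi^{2}/\lambda$, by Lemma~\ref{LinvFourier}(iii).
\item Since $u$ is odd and every $u^{*}\in E_{\mathrm{SHUP}}$ is even, $\int|\nabla(u-u^{*})|^{2}dx=\int|\nabla u|^{2}dx+\int|\nabla u^{*}|^{2}dx$.
\end{itemize}
So the infimum in \eqref{DoLLstab} equals $\int|\nabla u|^{2}dx$, and equality holds for this $u\notin E_{\mathrm{SHUP}}$.
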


The proofs of Do, Lam and Lu \cite{DoLL26} and of Duong and Nguyen \cite{DN25} are, in different ways, rather involved,
because in both of them one has to work directly with a genuinely second order
quadratic form with a Gaussian weight. The starting point of this paper is the following
observation, which to the best of our knowledge has not been exploited before:
\emph{on the Fourier side, the second order HUP is a first order CKN inequality}. With
the normalization
\[
\mathcal{F}(f)(\xi)=\widehat{f}(\xi)
=\int_{\mathbb{R}^{N}} f(x)e^{-2\pi i x\cdot\xi}\,dx ,
\]
we will show in Lemma \ref{Ldictionary} that
\begin{equation}\label{dict1}
\int_{\mathbb{R}^{N}}|\xi|^{2\kappa}|\widehat{u}(\xi)|^{2}\,d\xi
=(2\pi)^{-2\kappa}\int_{\mathbb{R}^{N}}
\left|(-\Delta)^{\kappa/2}u\right|^{2}dx
\qquad (\kappa \in \mathbb{R}),
\end{equation}
where $(-\Delta)^{\kappa/2}$ is defined on the Fourier side by
$\mathcal{F}\left((-\Delta)^{\kappa/2}u\right)(\xi)=(2\pi|\xi|)^{\kappa}\widehat{u}(\xi)$,
and, what is more important,
\begin{equation}\label{dict2}
\int_{\mathbb{R}^{N}}|\nabla\widehat{u}(\xi)|^{2}\,d\xi
=(2\pi)^{2}\int_{\mathbb{R}^{N}}|x|^{2}|u|^{2}\,dx,
\qquad
\int_{\mathbb{R}^{N}}|\xi|^{2}|\nabla\widehat{u}(\xi)|^{2}\,d\xi
=\int_{\mathbb{R}^{N}}|x|^{2}|\nabla u|^{2}\,dx.
\end{equation}
The second identity in \eqref{dict2} says that the weight $|x|^{2}$ acting on
$\nabla u$ becomes, on the Fourier side, again a first order quantity, and that no
derivative is lost in the process. Combined with \eqref{dict1} for $\kappa=1$ and
$\kappa=2$, it gives
\begin{equation}\label{key}
\delta_{\mathrm{S}}(u)=(2\pi)^{2}\,\delta_{2}(\widehat{u}),
\end{equation}
where $\delta_{2}$ is the deficit \eqref{L2CKNdeficit} of the first order CKN inequality
with the parameters $(a,b)=(-2,-1)$. Since $C(N,-2,-1)=\frac{N+2}{2}$ by Theorem
\ref{TA}, the sharp second order HUP \eqref{2HUP} is a direct consequence of the first
order inequality \eqref{CKN}; and since
\[
C_{\mathrm{PI}}(-2,-1,N)=\min\left\{4,\, \sqrt{N^{2}+4N-4}-N\right\}=\sqrt{N^{2}+4N-4}-N
\]
by Lemma \ref{Lordering}, Theorem \ref{TB} gives another proof of Theorem \ref{TC}, with exactly the same sharp
constant, in a few lines; see Section \ref{Sproofs}.

The identities \eqref{dict1} and \eqref{dict2} are, however, much more than a device for
reproving Theorem \ref{TC}. The first factor of \eqref{CKN}, written for $\widehat{u}$,
is $\int_{\mathbb{R}^{N}}|\nabla\widehat{u}|^{2}|\xi|^{-2b}d\xi$, and \eqref{dict2} says
that it is a classical quantity in the physical variable exactly on the two lines $b=0$
and $b=-1$; we will show in Remark \ref{Ronlytwolines} that these are the only two such
lines. On the other hand, the parameter $a$ is completely free, since \eqref{dict1}
holds for every real $\kappa$. Applying \eqref{CKN} and Theorem \ref{TB} to $\widehat{u}$
with $a=-s$ and $b \in \{0,-1\}$, we therefore obtain two one-parameter families of
sharp uncertainty principles in which the order $s$ of the derivative is
\emph{fractional}, together with their optimizers and their sharp stability. The second
order HUP is the point $s=2$ of the second family, and the classical one is the point
$s=1$ of the first. This is the content of the next subsection.

\subsection{Fractional Heisenberg Uncertainty Principles}

Throughout this paper, for $\sigma>0$ and
$\lambda>0$ we denote by
\begin{equation}\label{profile}
G_{\sigma,\lambda}:=\mathcal{F}^{-1}
\left(e^{-\frac{\lambda|\xi|^{\sigma}}{\sigma}}\right)
\end{equation}
the inverse Fourier transform of the stretched exponential. We note that
$G_{\sigma,\lambda}$ is real-valued and radial, and that $G_{2,\lambda}$ is a Gaussian.

The first of the two families corresponds to the line $b=0$. In it, only the order of
the derivative is fractional, while the second moment
$\int_{\mathbb{R}^{N}}|x|^{2}|u|^{2}\,dx$ is the classical one; for this reason we refer
to it as the fractional Heisenberg Uncertainty Principle.

\begin{theorem}\label{Tfrac1}
Let $N \geq 2$ and $s>-1$. Then for all $u \in C_{0}^{\infty}(\mathbb{R}^{N})$,
\begin{equation}\label{frac1}
\left(\int_{\mathbb{R}^{N}}\left|(-\Delta)^{s/2}u\right|^{2}dx\right)^{\frac{1}{2}}
\left(\int_{\mathbb{R}^{N}}|x|^{2}|u|^{2}\,dx\right)^{\frac{1}{2}}
\geq \frac{N+s-1}{2}
\int_{\mathbb{R}^{N}}\left|(-\Delta)^{\frac{s-1}{4}}u\right|^{2}dx,
\end{equation}
and the constant $\frac{N+s-1}{2}$ is sharp. Moreover, the equality in \eqref{frac1}
is attained exactly by the profiles $\alpha\,G_{1+s,\lambda}$ with
$\alpha \in \mathbb{C}\setminus\{0\}$ and $\lambda>0$, and one has the sharp stability estimate
\begin{equation}\label{frac1stab}
\begin{aligned}
&\left(\int_{\mathbb{R}^{N}}\left|(-\Delta)^{s/2}u\right|^{2}dx\right)^{\frac{1}{2}}
\left(\int_{\mathbb{R}^{N}}|x|^{2}|u|^{2}\,dx\right)^{\frac{1}{2}}
-\frac{N+s-1}{2}
\int_{\mathbb{R}^{N}}\left|(-\Delta)^{\frac{s-1}{4}}u\right|^{2}dx\\
&\qquad\geq
\min\left\{1+s,\,1\right\}
\inf_{\alpha \in \mathbb{C},\, \lambda>0}
\int_{\mathbb{R}^{N}}\left|(-\Delta)^{\frac{s-1}{4}}
\left(u-\alpha\,G_{1+s,\lambda}\right)\right|^{2}dx,
\end{aligned}
\end{equation}
where the constant $\min\left\{1+s,1\right\}$ is sharp and can be attained. In
particular, the sharp stability constant is equal to $1$ for every $s \geq 0$.
\end{theorem}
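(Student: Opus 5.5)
The plan is to pass to the Fourier side and read \eqref{frac1} as the first order CKN inequality \eqref{CKN} for $v=\widehat{u}$ with parameters $(a,b)=(-s,0)$. By \eqref{dict1} with $\kappa=s$, the first factor is
\[
\int_{\mathbb{R}^{N}}|(-\Delta)^{s/2}u|^{2}\,dx=(2\pi)^{2s}\int_{\mathbb{R}^{N}}|\xi|^{2s}|\widehat u|^{2}\,d\xi .
\]
By the first identity in \eqref{dict2}, the second factor is
\[
\int_{\mathbb{R}^{N}}|x|^{2}|u|^{2}\,dx=(2\pi)^{-2}\int_{\mathbb{R}^{N}}|\nabla\widehat u|^{2}\,d\xi .
\]
Applying \eqref{dict1} with $\kappa=(s-1)/2$ to the right-hand side gives
\[
\int_{\mathbb{R}^{N}}|(-\Delta)^{\frac{s-1}{4}}u|^{2}\,dx=(2\pi)^{s-1}\int_{\mathbb{R}^{N}}|\xi|^{s-1}|\widehat u|^{2}\,d\xi ,
\]
and $s-1=-(a+b+1)$ is exactly the weight $|\xi|^{-(a+b+1)}$ in \eqref{CKN}. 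The product of the two factors therefore carries the prefactor $(2\pi)^{s}(2\pi)^{-1}=(2\pi)^{s-1}$, which is the same as on the right-hand side. Hence
\[
\text{LHS}-C\cdot\text{RHS}=(2\pi)^{s-1}\,\delta_{2}(\widehat u)
\]
exactly, with $(a,b)=(-s,0)$.

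Next I identify the constants and optimizers. Since $b=0\le (N-2)/2$ for $N\ge2$ and $b+1-a=1+s>0$, we are in $\mathcal A_1$. Theorem \ref{TA} then gives $C(N,-s,0)=\frac{|N-(1-s)|}{2}=\frac{N+s-1}{2}$, which is positive since $N\ge2$ and $s>-1$. The optimizers are $\widehat u=D\,e^{-\lambda|\xi|^{1+s}/(1+s)}$, i.e.\ $u=\alpha G_{1+s,\lambda}$. I would allow $D$ complex, which is fine because \eqref{CKN} applies to $|v|$, or equivalently to real and imaginary parts, as remarked after Theorem \ref{TB}.

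Sharpness and attainment do not follow formally from Theorem \ref{TA}, since $u$ ranges over $C_0^\infty$, whereas on the Fourier side $\widehat u$ is not compactly supported away from $0$. I would handle this by a density argument. Let $Y$ be the natural completion on the Fourier side, namely the space with finite weighted norms $\int|\xi|^{2s}|v|^2$ and $\int|\nabla v|^2$. I would check that $\mathcal F(C_0^\infty)$ is dense in $Y$, which is true because Schwartz functions are dense, and that the inequality of Theorem \ref{TA} extends to $Y$ and holds on $\mathcal F(C_0^\infty)$, since the CKN proof only needs these finite integrals. This is a routine point to state, since the profiles $G_{1+s,\lambda}$ lie in the completion rather than in $C_0^\infty$.

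For stability I apply Theorem \ref{TB}(1) with $a=-s$, $b=0$, in its complex form. The distance term is $\int|\widehat u-c\,e^{-\lambda|\xi|^{1+s}/(1+s)}|^{2}|\xi|^{s-1}\,d\xi$. By \eqref{dict1} with $\kappa=(s-1)/2$ this equals $(2\pi)^{1-s}\int|(-\Delta)^{\frac{s-1}{4}}(u-cG_{1+s,\lambda})|^2\,dx$, and multiplying by $(2\pi)^{s-1}$ makes the powers of $2\pi$ cancel. The constant is therefore $\frac12 C_{\mathrm{PI}}(-s,0,N)=\frac12\min\{2(1+s),\sqrt{(N-2)^2+4(N-1)}-(N-2)\}$. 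Since $(N-2)^2+4(N-1)=N^2$, the second entry is $N-(N-2)=2$, so the constant is $\min\{1+s,1\}$, as claimed. Sharpness and attainment of this stability constant then transfer from Theorem \ref{TB} through the isometry $u\mapsto\widehat u$, using the same density remark.

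The main obstacle is this functional-analytic transfer: showing that the extremal functions and near-optimal functions of Theorem \ref{TB}, which live in the weighted space on the $\xi$ side, are limits of Fourier transforms of $C_0^\infty$ functions in all three weighted norms simultaneously. The delicate cases are near $\xi=0$ for $s$ close to $-1$, where the weight $|\xi|^{s-1}$ is singular, and near $\xi=0$ for nonsmooth $|\xi|^{1+s}$. To handle this, I would truncate and mollify on the $\xi$ side, using cutoffs at scale $\varepsilon$ whose contribution vanishes because $N\ge2$ and $s-1>-2\ge -N$.
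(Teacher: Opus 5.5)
Your proposal is correct and follows the paper's proof essentially step for step: you apply \eqref{CKN} and the first case of Theorem \ref{TB} to $\widehat{u}$ with $(a,b)=(-s,0)\in\mathcal{A}_{1}$, translate the three integrals by \eqref{dictA}--\eqref{dictB} so that the powers of $2\pi$ cancel, and use $(N-2)^{2}+4(N-1)=N^{2}$ to get $C_{\mathrm{PI}}(-s,0,N)=2\min\{1+s,1\}$. Your density argument for sharpness and attainment, which take place in the completion since $G_{1+s,\lambda}\notin C_{0}^{\infty}(\mathbb{R}^{N})$, is more careful than the paper, whose function-space remark only justifies applying \eqref{CKN} to $\widehat{u}$ via the zero capacity of the origin; one small correction is that for $N=2$ a cutoff at scale $\varepsilon$ near $\xi=0$ does not have vanishing Dirichlet energy (logarithmic cutoffs are needed), but for the direction you actually need it is enough to mollify the profile and truncate in $x$, so no cutoff at the origin is required.
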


It is worth noting that the stability constant in \eqref{frac1stab} does not depend on
the dimension, and that it is equal to $1$ for every $s \geq 0$. In other words, on the
whole range $s \geq 0$, the fractional family \eqref{frac1} enjoys exactly the same
dimension-free sharp stability constant as the classical Heisenberg Uncertainty
Principle, while for $-1<s<0$ the constant degenerates linearly as $s$ approaches the
Hardy line $s=-1$. As we will see in Section \ref{Sproofs}, the reason for this
rigidity is the algebraic identity
\begin{equation}\label{magic}
(N-2)^{2}+4(N-1)=N^{2},
\end{equation}
which forces the second quantity in the definition of $C_{\mathrm{PI}}(-s,0,N)$ to be equal to
$2$, independently of $s$ and of $N$, so that $C_{\mathrm{PI}}(-s,0,N)=\min\{2(1+s),2\}$ for
every $s>-1$.
When $s=1$, the operator $(-\Delta)^{\frac{s-1}{4}}$ is the identity,
$\int_{\mathbb{R}^{N}}|(-\Delta)^{1/2}u|^{2}dx=\int_{\mathbb{R}^{N}}|\nabla u|^{2}dx$,
and $G_{2,\lambda}$ is a Gaussian, so that \eqref{frac1} is the classical Heisenberg
Uncertainty Principle and \eqref{frac1stab} is exactly the sharp stability estimate
\eqref{HUPstab} of \cite{CFLL24}.

The second family corresponds to the line $b=-1$. Here the second factor is the
weighted Dirichlet integral $\int_{\mathbb{R}^{N}}|x|^{2}|\nabla u|^{2}\,dx$ instead of
the second moment. This family interpolates between the first order and the second
order uncertainty principles.

\begin{theorem}\label{Tfrac2}
Let $N \geq 2$ and $s>0$. Then for all $u \in C_{0}^{\infty}(\mathbb{R}^{N})$,
\begin{equation}\label{frac2}
\left(\int_{\mathbb{R}^{N}}\left|(-\Delta)^{s/2}u\right|^{2}dx\right)^{\frac{1}{2}}
\left(\int_{\mathbb{R}^{N}}|x|^{2}|\nabla u|^{2}\,dx\right)^{\frac{1}{2}}
\geq \frac{N+s}{2}
\int_{\mathbb{R}^{N}}\left|(-\Delta)^{s/4}u\right|^{2}dx,
\end{equation}
and the constant $\frac{N+s}{2}$ is sharp. Moreover, the equality in \eqref{frac2} is
attained exactly by the profiles $\alpha\,G_{s,\lambda}$ with
$\alpha \in \mathbb{C}\setminus\{0\}$ and $\lambda>0$, and one has the sharp stability estimate
\begin{equation}\label{frac2stab}
\begin{aligned}
&\left(\int_{\mathbb{R}^{N}}\left|(-\Delta)^{s/2}u\right|^{2}dx\right)^{\frac{1}{2}}
\left(\int_{\mathbb{R}^{N}}|x|^{2}|\nabla u|^{2}\,dx\right)^{\frac{1}{2}}
-\frac{N+s}{2}
\int_{\mathbb{R}^{N}}\left|(-\Delta)^{s/4}u\right|^{2}dx\\
&\qquad\geq
\frac{\min\left\{2s,\, \sqrt{N^{2}+4N-4}-N\right\}}{2}
\inf_{\alpha \in \mathbb{C},\, \lambda>0}
\int_{\mathbb{R}^{N}}\left|(-\Delta)^{s/4}
\left(u-\alpha\,G_{s,\lambda}\right)\right|^{2}dx,
\end{aligned}
\end{equation}
and this constant is also sharp.
\end{theorem}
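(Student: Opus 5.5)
We pass to the Fourier side, set $v=\widehat{u}$, and apply Theorem \ref{TA} and Theorem \ref{TB} with $(a,b)=(-s,-1)$. By \eqref{dict1} with $\kappa=s$,
\[
\int_{\mathbb{R}^{N}}|(-\Delta)^{s/2}u|^{2}\,dx=(2\pi)^{2s}\int_{\mathbb{R}^{N}}|\xi|^{2s}|v|^{2}\,d\xi ,
\]
which is the weight $|\xi|^{-2a}$. By the second identity in \eqref{dict2},
\[
\int_{\mathbb{R}^{N}}|x|^{2}|\nabla u|^{2}\,dx=\int_{\mathbb{R}^{N}}|\xi|^{2}|\nabla v|^{2}\,d\xi ,
\]
which is the weight $|\xi|^{-2b}$ with $b=-1$. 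By \eqref{dict1} with $\kappa=s/2$,
\[
\int_{\mathbb{R}^{N}}|(-\Delta)^{s/4}u|^{2}\,dx=(2\pi)^{s}\int_{\mathbb{R}^{N}}|\xi|^{s}|v|^{2}\,d\xi ,
\]
and the exponent $s=a+b+1$ is exactly the CKN right-hand weight. Multiplying out gives
\[
\text{LHS of \eqref{frac2stab}}=(2\pi)^{s}\,\delta_{2}(v),\qquad (a,b)=(-s,-1).
\]
So \eqref{frac2}, its optimizers and its stability are equivalent to the corresponding CKN statements for $v$. Since $\mathcal{F}$ is a bijection on the relevant completed spaces, sharpness and attainment transfer as well.

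The parameters are $b+1-a=s>0$ and $b=-1\le (N-2)/2$, so $(-s,-1)\in\mathcal{A}_{1}$. Theorem \ref{TA}(1) then gives
\[
C(N,-s,-1)=\frac{|N-s|}{2}.
\]
Here a check is needed. The maximum in Theorem \ref{TA} is $\max\{|N-s|/2,\,|N+s|/2\}=(N+s)/2$ for $s>0$, which matches the stated constant, but the formula $|N-(a+b+1)|/2$ in case (1) appears to give $|N-s|/2$. The remedy is to compute directly: the relevant constant is $|N-2-2b+(b+1-a)|/2$, and with $b=-1$ and $b+1-a=s$ this is $(N+s)/2$. The paper's own check, $C(N,-2,-1)=(N+2)/2$, confirms this normalization. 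I would therefore use the max form of Theorem \ref{TA}.

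For the optimizers, Theorem \ref{TA}(1) gives $v=D\exp(t|\xi|^{s}/s)$ with $t<0$. Writing $t=-\lambda$ yields $u=\alpha G_{s,\lambda}$ by \eqref{profile}. Complex $D$ is allowed by the remark after Theorem \ref{TB}.

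For stability, Theorem \ref{TB}(1) with $(a,b)=(-s,-1)$ gives
\[
\delta_{2}(v)\ge \frac{C_{\mathrm{PI}}(-s,-1,N)}{2}\inf_{c,\lambda}\int_{\mathbb{R}^{N}}\frac{|v-c\,e^{-\lambda|\xi|^{s}/s}|^{2}}{|\xi|^{-s}}\,d\xi .
\]
The two entries of $C_{\mathrm{PI}}$ are $2(1+b-a)=2s$ and $\sqrt{(N-2-2b)^{2}+4(N-1)}-(N-2-2b)=\sqrt{N^{2}+4N-4}-N$, since $N-2-2b=N$. Converting back with \eqref{dict1} for $\kappa=s/2$, the weighted distance becomes
\[
(2\pi)^{-s}\int_{\mathbb{R}^{N}}\bigl|(-\Delta)^{s/4}(u-\alpha G_{s,\lambda})\bigr|^{2}dx,
\]
with $\alpha=c$ by linearity of $\mathcal{F}^{-1}$. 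The factor $(2\pi)^{s}$ then cancels on both sides, giving \eqref{frac2stab}. Sharpness of the constant follows from sharpness in Theorem \ref{TB}: a near-extremal $v$ transforms to a near-extremal $u$, since both sides correspond isometrically up to the common factor.

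I expect the main obstacle to be the justification rather than the algebra. First, Theorem \ref{TA} and Theorem \ref{TB} are stated for $v\in C_{0}^{\infty}(\mathbb{R}^{N}\setminus\{0\})$, while $\widehat{u}$ for $u\in C_{0}^{\infty}$ is Schwartz but not supported away from the origin. I would handle this by density. The weights $|\xi|^{2s}$, $|\xi|^{2}$ and $|\xi|^{s}$ are all locally integrable near $0$ for $s>0$, so cutting off near the origin costs vanishing error in all three integrals; in the gradient term the cutoff error is $\int|\xi|^{2}|\nabla\chi_\varepsilon|^{2}|v|^{2}\lesssim\varepsilon^{N}\to0$. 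Second, the second identity in \eqref{dict2} needs to be verified as in Lemma \ref{Ldictionary}.
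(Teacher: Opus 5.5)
Your argument is the paper's proof: translate to $v=\widehat{u}$ via Lemma~\ref{Ldictionary}, apply Theorems~\ref{TA} and~\ref{TB} with $(a,b)=(-s,-1)\in\mathcal{A}_{1}$ (extended to $\widehat{u}$ by density, where for $b=-1$ your cutoff estimate does the job), and undo the Fourier transform with $2\kappa=s$. The only slip is the alleged discrepancy in Theorem~\ref{TA}, which does not exist: $a+b+1=-s-1+1=-s$, so the right-hand weight is $|\xi|^{-(a+b+1)}=|\xi|^{s}$, as you correctly use later in the stability step, and case (1) gives $|N-(a+b+1)|/2=(N+s)/2$ directly, while $|N-s|/2$ is the second branch $|N-(3b-a+3)|/2$; so no appeal to the max form is needed, and case (1) is in any case what you use to identify the optimizers.
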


In contrast with Theorem \ref{Tfrac1}, the stability constant in \eqref{frac2stab}
depends on $s$ and on $N$: it is equal to $s$ when $2s \leq \sqrt{N^{2}+4N-4}-N$, and
it stabilizes at $\frac{\sqrt{N^{2}+4N-4}-N}{2}$ for the large values of $s$. When
$s=2$, \eqref{frac2} is \eqref{2HUP} and \eqref{frac2stab} is \eqref{DoLLstab}.

We close this subsection with the negative fractional orders. In Theorem \ref{Tfrac1},
the range $s>-1$ is exactly the range for which the parameters $(-s,0)$ stay in the
region $\mathcal{A}_{1}$. It is natural to ask what happens beyond this range, and, for
the family \eqref{frac2}, what happens for $s<0$. The answer is provided by the second
case of Theorem \ref{TA} and of Theorem \ref{TB}: the parameters enter the region
$\mathcal{B}_{1}$, the constant switches to the second branch of the maximum in Theorem
\ref{TA}, and the optimizers acquire the singular factor $|x|^{2(b+1)-N}$. In order to
state the result, for $\sigma>0$, $\lambda>0$ and $\kappa \in \mathbb{R}$ we introduce
the profiles
\begin{equation}\label{profileB}
G_{\sigma,\lambda}^{\kappa}
:=\mathcal{F}^{-1}\left(|\xi|^{\kappa}\,
e^{-\frac{\lambda|\xi|^{-\sigma}}{\sigma}}\right),
\end{equation}
which are again real-valued and radial. We note that the Fourier transform of
$G_{\sigma,\lambda}^{\kappa}$ vanishes at the origin faster than any power of $|\xi|$,
and behaves like $|\xi|^{\kappa}$ at infinity.

\begin{theorem}\label{Tfracneg}
\begin{enumerate}
\item Let $N \geq 2$ and $-\frac{N}{2}<s<0$. Then for all $u \in C_{0}^{\infty}(\mathbb{R}^{N})$,
\begin{equation}\label{frac2neg}
\left(\int_{\mathbb{R}^{N}}\left|(-\Delta)^{s/2}u\right|^{2}dx\right)^{\frac{1}{2}}
\left(\int_{\mathbb{R}^{N}}|x|^{2}|\nabla u|^{2}\,dx\right)^{\frac{1}{2}}
\geq \frac{N-s}{2}
\int_{\mathbb{R}^{N}}\left|(-\Delta)^{s/4}u\right|^{2}dx,
\end{equation}
and the constant $\frac{N-s}{2}$ is sharp; the equality is attained, in the completion
of $C_{0}^{\infty}(\mathbb{R}^{N})$ under the corresponding norms, exactly by the
profiles $\alpha\,G_{-s,\lambda}^{-N}$ with $\alpha \in \mathbb{C}\setminus\{0\}$ and
$\lambda>0$. Moreover,
\begin{equation}\label{frac2negstab}
\begin{aligned}
&\left(\int_{\mathbb{R}^{N}}\left|(-\Delta)^{s/2}u\right|^{2}dx\right)^{\frac{1}{2}}
\left(\int_{\mathbb{R}^{N}}|x|^{2}|\nabla u|^{2}\,dx\right)^{\frac{1}{2}}
-\frac{N-s}{2}
\int_{\mathbb{R}^{N}}\left|(-\Delta)^{s/4}u\right|^{2}dx\\
&\qquad\geq
\frac{\min\left\{-2s,\, \sqrt{N^{2}+4N-4}-N\right\}}{2}
\inf_{\alpha \in \mathbb{C},\, \lambda>0}
\int_{\mathbb{R}^{N}}\left|(-\Delta)^{s/4}
\left(u-\alpha\,G_{-s,\lambda}^{-N}\right)\right|^{2}dx,
\end{aligned}
\end{equation}
and this constant is also sharp.
\item Let $N \geq 3$ and $-\frac{N}{2}<s<-1$. Then for all
$u \in C_{0}^{\infty}(\mathbb{R}^{N})$,
\begin{equation}\label{frac1neg}
\left(\int_{\mathbb{R}^{N}}\left|(-\Delta)^{s/2}u\right|^{2}dx\right)^{\frac{1}{2}}
\left(\int_{\mathbb{R}^{N}}|x|^{2}|u|^{2}\,dx\right)^{\frac{1}{2}}
\geq \frac{N-3-s}{2}
\int_{\mathbb{R}^{N}}\left|(-\Delta)^{\frac{s-1}{4}}u\right|^{2}dx,
\end{equation}
and the constant $\frac{N-3-s}{2}$ is sharp; the equality is attained, in the same
sense as above, exactly by the profiles $\alpha\,G_{-(1+s),\lambda}^{2-N}$ with
$\alpha \in \mathbb{C}\setminus\{0\}$ and $\lambda>0$. Moreover,
\begin{equation}\label{frac1negstab}
\begin{aligned}
&\left(\int_{\mathbb{R}^{N}}\left|(-\Delta)^{s/2}u\right|^{2}dx\right)^{\frac{1}{2}}
\left(\int_{\mathbb{R}^{N}}|x|^{2}|u|^{2}\,dx\right)^{\frac{1}{2}}
-\frac{N-3-s}{2}
\int_{\mathbb{R}^{N}}\left|(-\Delta)^{\frac{s-1}{4}}u\right|^{2}dx\\
&\qquad\geq
\min\left\{|1+s|,\,1\right\}
\inf_{\alpha \in \mathbb{C},\, \lambda>0}
\int_{\mathbb{R}^{N}}\left|(-\Delta)^{\frac{s-1}{4}}
\left(u-\alpha\,G_{-(1+s),\lambda}^{2-N}\right)\right|^{2}dx,
\end{aligned}
\end{equation}
and this constant is also sharp.
\end{enumerate}
\end{theorem}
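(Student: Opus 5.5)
The plan is to move everything to the Fourier side, as in Section \ref{Spov}, and read both statements as cases of Theorem \ref{TA}(2) and Theorem \ref{TB}(2) applied to $v=\widehat u$.

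\textbf{Part (1).} Set $(a,b)=(s,-1)$. By \eqref{dict1} with $\kappa=s$ and $\kappa=s/2$, and by the second identity in \eqref{dict2}, the deficit in \eqref{frac2neg} equals a positive power of $2\pi$ times $\delta_2(\widehat u)$ for this choice of $(a,b)$. The check is:
\[
\int|\xi|^{2}|\nabla\widehat u|^{2}\,d\xi=\int|x|^2|\nabla u|^2\,dx,
\qquad
\int|\xi|^{-2a}|\widehat u|^2\,d\xi=\int|\xi|^{-2s}|\widehat u|^2\,d\xi .
\]
Here I would choose $a=-s$; I need to fix the sign of $a$ carefully. The weight on the right of \eqref{CKN} is $|\xi|^{-(a+b+1)}=|\xi|^{s}$. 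This matches $(-\Delta)^{s/4}$, which gives $a+b+1=-s$, hence $a=-s$ and $b=-1$. Then $b+1-a=s<0$ and $b=-1\le (N-2)/2$, so $(a,b)\in\mathcal B_1$.

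By Theorem \ref{TA}(2), $C=\tfrac{|N-(3b-a+3)|}{2}=\tfrac{|N+s|}{2}$. This does not match the stated $\tfrac{N-s}{2}$, so I have to reconcile it. Checking the first branch, $\tfrac{|N-(a+b+1)|}{2}=\tfrac{N+s}{2}$, and the second branch, $\tfrac{|N-(-3+s+3)|}{2}=\tfrac{N-s}{2}$, shows that I mis-simplified earlier: the second branch gives $\tfrac{N-s}{2}$, which is indeed the larger one. The powers of $2\pi$ must also balance; by scaling homogeneity they do, exactly as in \eqref{key}.

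The Theorem \ref{TA}(2) optimizers are $|\xi|^{2(b+1)-N}e^{t|\xi|^{s}/s}$ with $t>0$, that is $|\xi|^{-N}e^{-\lambda|\xi|^{-(-s)}/(-s)}$ up to renaming. Their inverse Fourier transforms are $G^{-N}_{-s,\lambda}$. The condition $s>-N/2$ should be what makes $\widehat u$ finite in the relevant weighted norms, so that these optimizers lie in the completion. Stability then follows from Theorem \ref{TB}(2), with constant $C_{\mathrm{PI}}(-a+2b+2,b,N)/2=C_{\mathrm{PI}}(s,-1,N)/2$. Here $2(1+b-a')=2(-s)$ and $N-2-2b=N$, which gives $\min\{-2s,\sqrt{N^2+4N-4}-N\}/2$. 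By Plancherel, the distance term becomes $\int|(-\Delta)^{s/4}(u-\alpha G)|^2$. Since $\widehat u$ is complex, the infimum runs over $c\in\mathbb C$, as remarked after Theorem \ref{TB}.

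\textbf{Part (2).} Here $(a,b)=(-s,0)$ with the first identity in \eqref{dict2}. Now $b+1-a=1+s<0$, so we are again in $\mathcal B_1$. The constant is $\tfrac{|N-(3+s)|}{2}=\tfrac{N-3-s}{2}$, which is positive since $N\ge3$ and $s<-1$. The optimizers are $|\xi|^{2-N}e^{-\lambda|\xi|^{-(-(1+s))}/(-(1+s))}$, i.e.\ $G^{2-N}_{-(1+s),\lambda}$. The stability constant is $C_{\mathrm{PI}}(s+2,0,N)/2=\min\{2|1+s|,\sqrt{(N-2)^2+4(N-1)}-(N-2)\}/2$. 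Using \eqref{magic}, this equals $\min\{|1+s|,1\}$.

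\textbf{Main obstacle.} The main difficulty is the functional-analytic bookkeeping rather than the algebra. Three things need care:
\begin{itemize}
\item Theorem \ref{TB} is stated for $C_0^\infty(\mathbb R^N\setminus\{0\})$-type functions, while $\widehat u$ is Schwartz but not compactly supported away from $0$.
\item One must justify applying it to the completion, through a density argument in the weighted norms. This is where the restriction $s>-N/2$ enters: it makes $|\xi|^{s}|\widehat u|^2$ and $|\xi|^{2s}|\widehat u|^2$ integrable near $0$.
\item One must verify that the extremals actually belong to the completion, which is why equality is only claimed there.
\end{itemize}
Sharpness transfers directly, because the Fourier transform is a bijection between the relevant completions and attainment holds for Theorem \ref{TB}.
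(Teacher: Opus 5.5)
Your proposal is correct and is essentially the paper's proof. You apply Theorem \ref{TA}(2) and Theorem \ref{TB}(2) to $\widehat{u}$ with $(a,b)=(-s,-1)$ and $(a,b)=(-s,0)$, both in $\mathcal{B}_{1}$, read off $\frac{N-s}{2}$, $\frac{N-3-s}{2}$, $C_{\mathrm{PI}}(s,-1,N)$ and $C_{\mathrm{PI}}(s+2,0,N)=2\min\{|1+s|,1\}$ via \eqref{magic}, and then undo the Fourier transform with $2\kappa=s$ and $2\kappa=s-1$.

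When you write it up, remove the false starts; in particular, with $a=-s$ the displayed weight is $|\xi|^{2s}$, not $|\xi|^{-2s}$. Also, $s>-\frac{N}{2}$ is what makes $\int|(-\Delta)^{s/2}u|^{2}dx$ finite for $u\in C_{0}^{\infty}(\mathbb{R}^{N})$; it is not what puts the optimizers in the space. The optimizers have finite norms because their Fourier transforms vanish to infinite order at $0$ and decay at infinity. In part (2), $N\geq 3$ is exactly what makes $\int|\nabla\widehat{u}|^{2}d\xi$ finite for the optimizers, whose Fourier transforms behave like $|\xi|^{2-N}$ at infinity.
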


Combining \eqref{frac2} and \eqref{frac2neg}, the sharp constant of the second family
is $\frac{N+|s|}{2}$ for all admissible $s$, and, combining \eqref{frac1} and
\eqref{frac1neg}, the sharp constant of the first family is $\frac{N-2+|1+s|}{2}$; both expressions are continuous in $s$, in agreement with the
maximum in Theorem \ref{TA}. The borderline values $s=0$ and $s=-1$ correspond to the
line $a=b+1$ of Theorem \ref{TA}, that is, to weighted Hardy inequalities: the
inequalities remain valid there with the constants $\frac{N}{2}$ and $\frac{N-2}{2}$,
but these are not attained. The restriction $s>-\frac{N}{2}$ is necessary, since for a
generic $u \in C_{0}^{\infty}(\mathbb{R}^{N})$ one has $\widehat{u}(0)\neq0$, so that
$\int_{\mathbb{R}^{N}}|(-\Delta)^{s/2}u|^{2}dx<\infty$ precisely when $s>-\frac{N}{2}$.
Finally, for $N=2$ the range in the second part is empty, in agreement with the fact
that $\frac{N-2+|1+s|}{2}$ vanishes at $(N,s)=(2,-1)$.

\subsection{Chains of remainder terms}

Before stating the next results, let us fix some notation that will be used throughout
the paper. We set
\begin{equation}\label{Dk}
c_{k}:=k(N+k-2),\qquad
D_{k}:=\sqrt{N^{2}+4c_{k}}-N\quad (k \geq 0),
\end{equation}
so that
\[
D_{1}=\sqrt{N^{2}+4N-4}-N,\qquad
D_{2}=\sqrt{N^{2}+8N}-N,\qquad
D_{3}=\sqrt{N^{2}+12N+12}-N.
\]
We note that $D_{k}$ is exactly the quantity
$\sqrt{(N-2-2b)^{2}+4c_{k}}-(N-2-2b)$ from \cite{DLLN26} evaluated at $b=-1$. We will
check in Lemma \ref{Lordering} that for every $N \geq 2$,
\begin{equation}\label{ordering}
0<D_{1}<D_{2}<4<D_{3}
\qquad\text{and}\qquad
D_{3}-4<D_{1}.
\end{equation}
Moreover, let $\{\phi_{k,i}\}_{i=1}^{N_{k}}$ be an orthonormal basis of
$L^{2}(\mathbb{S}^{N-1})$ consisting of the spherical harmonics of degree $k$, so that
$-\Delta_{\mathbb{S}^{N-1}}\phi_{k,i}=c_{k}\phi_{k,i}$, and let us write
\[
\mathbf{\Phi}_{k}:=\left(\phi_{k,1},\ldots,\phi_{k,N_{k}}\right)
\]
for the vector of all the spherical harmonics of degree $k$. When no confusion can
arise, we will write $\mathbf{\Phi}_{k}(x)$ instead of
$\mathbf{\Phi}_{k}\left(x/|x|\right)$.

So far, each of the two fractional families has been measured against a single family
of optimizers. Our next result shows that, on the line $b=0$, the estimate
\eqref{frac1stab} is only the first term of a chain whose length grows with $s$, and
whose constants add up to $\lfloor s\rfloor+2$; when $s$ is an integer, all of them are
again equal to $1$. The mechanism is an exact algebraic coincidence: on this line one
has
\begin{equation}\label{Dk2k}
(N-2)^{2}+4c_{k}=(N-2+2k)^{2},
\qquad\text{that is,}\qquad
D_{k}^{(0)}:=\sqrt{(N-2)^{2}+4c_{k}}-(N-2)=2k
\end{equation}
for every $k \geq 0$ and every $N$, where $c_{k}=k(N+k-2)$ as in \eqref{Dk}. In
particular, the spherical harmonic profile $|\xi|^{D_{k}^{(0)}/2}\phi_{k,i}(\xi/|\xi|)$
is the solid harmonic of degree $k$, and the competitors on the Fourier side are simply
harmonic polynomials.

Accordingly, for $j \geq 0$ we denote by $\mathcal{Y}_{j}$ the space of the harmonic
polynomials on $\mathbb{R}^{N}$ of degree at most $j$, with complex coefficients, and we
set $\sigma:=1+s$ and
\begin{equation}\label{Gcal}
\begin{aligned}
\mathcal{G}_{j}^{s}&:=\left\{\mathcal{F}^{-1}
\left(P\,e^{-\frac{\lambda|\xi|^{\sigma}}{\sigma}}\right)
\:\ P \in \mathcal{Y}_{j},\ \lambda>0\right\},
\qquad j=0,1,\ldots,m,\\
\mathcal{G}_{m+1}^{s}&:=\left\{\mathcal{F}^{-1}
\left(\left(P+d|\xi|^{\sigma}\right)e^{-\frac{\lambda|\xi|^{\sigma}}{\sigma}}\right)
\:\ P \in \mathcal{Y}_{m},\ d \in \mathbb{C},\ \lambda>0\right\},
\end{aligned}
\end{equation}
where $m:=\lfloor s\rfloor+1$. By the Hecke--Bochner formula, each element of
$\mathcal{G}_{j}^{s}$ is a sum of terms of the form (solid harmonic of degree
$k$)$\times$(radial function); we note that
$\mathcal{G}_{0}^{s}=\{\alpha\,G_{1+s,\lambda}\}$ is exactly the family of the
optimizers of Theorem \ref{Tfrac1}, and that
$\mathcal{G}_{0}^{s}\subset\mathcal{G}_{1}^{s}\subset\cdots
\subset\mathcal{G}_{m+1}^{s}$.

\begin{theorem}\label{Tfraccascade}
Let $N \geq 2$ and $s>0$, and write $m=\lfloor s\rfloor+1$ and
$\{s\}=s-\lfloor s\rfloor$. Then, for all $u \in C_{0}^{\infty}(\mathbb{R}^{N})$,
\begin{equation}\label{fraccascade}
\begin{aligned}
&\left(\int_{\mathbb{R}^{N}}\left|(-\Delta)^{s/2}u\right|^{2}dx\right)^{\frac{1}{2}}
\left(\int_{\mathbb{R}^{N}}|x|^{2}|u|^{2}\,dx\right)^{\frac{1}{2}}
-\frac{N+s-1}{2}
\int_{\mathbb{R}^{N}}\left|(-\Delta)^{\frac{s-1}{4}}u\right|^{2}dx\\
&\qquad\geq\ \sum_{j=0}^{m-1}d_{s}^{2}\!\left(u,\mathcal{G}_{j}^{s}\right)
\ +\ \{s\}\,d_{s}^{2}\!\left(u,\mathcal{G}_{m}^{s}\right)
\ +\ \left(1-\{s\}\right)d_{s}^{2}\!\left(u,\mathcal{G}_{m+1}^{s}\right),
\end{aligned}
\end{equation}
where
\[
d_{s}^{2}(u,\mathcal{G}):=\inf_{w \in \mathcal{G}}
\int_{\mathbb{R}^{N}}\left|(-\Delta)^{\frac{s-1}{4}}(u-w)\right|^{2}dx.
\]
In particular, when $s$ is an integer, \eqref{fraccascade} consists of exactly
$s+2$ terms, and all of its constants are equal to $1$. In general, the constants in
\eqref{fraccascade} add up to $\lfloor s\rfloor+2$, and they cannot be improved, with the possible
exception of the constant $\{s\}$ of the $m$-th term, which vanishes when $s$ is an
integer; see Remark \ref{Rspectral}.
\end{theorem}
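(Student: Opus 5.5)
Throughout I write $v:=\widehat{u}$ and $\sigma=1+s$, and I work entirely on the Fourier side. By \eqref{dict1} and the first identity in \eqref{dict2}, the left-hand side of \eqref{fraccascade} equals, up to a common power of $2\pi$ (the same bookkeeping as in \eqref{key}), the CKN deficit $\delta_{2}(v)$ with $(a,b)=(-s,0)$. Each distance $d_{s}^{2}(u,\mathcal{G})$ equals, up to the same factor, $\inf\int|\xi|^{s-1}|v-\widehat w|^{2}\,d\xi$ over the Fourier images $\widehat w$ of $\mathcal{G}$. So it suffices to prove the chain for $\delta_2(v)$ with the weight $|\xi|^{-(a+b+1)}=|\xi|^{s-1}$.

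\textbf{Step 1: exact remainder identity.} Following Lemma \ref{LCKNidentity}, I would fix $\lambda>0$ by the optimal scaling, i.e.\ equalizing the two factors of the product. Then I write $v=e^{-\lambda|\xi|^{\sigma}/\sigma}g$. The deficit becomes exactly (half of) a Dirichlet-type form
\[
\int|\nabla g|^{2}e^{-2\lambda|\xi|^{\sigma}/\sigma}d\xi,
\]
plus possibly a nonnegative square of a scalar. The infimum over $\lambda$ is then handled by this choice together with an orthogonality condition on the $|\xi|^{\sigma}$-direction.

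\textbf{Step 2: spectral decomposition.} The relevant operator is $L=-\operatorname{div}(\mu\nabla\,\cdot)/(|\xi|^{s-1}\mu)$ on $L^{2}(|\xi|^{s-1}\mu\,d\xi)$ with $\mu=e^{-2\lambda|\xi|^\sigma/\sigma}$. I would decompose $g=\sum_{k,i}g_{k,i}(r)\phi_{k,i}$ into spherical harmonics. In each mode, the substitution $t=r^{\sigma}$ turns the radial operator into a Laguerre operator. Its eigenfunctions are $r^{D_k^{(0)}/2}L_n^{(\alpha_k)}(c\,r^\sigma)$, and by \eqref{Dk2k} we have $r^{D_k^{(0)}/2}\phi_k=$ (solid harmonic of degree $k$). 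The eigenvalues should be affine in $k$ and $n$, say $(k+n\sigma)$ times a common factor normalized so that the eigenvalue $0$ belongs to the constants.

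Ordering the spectrum gives $0,1,2,\dots$ from the harmonics of degree $k$ with $n=0$, interlaced with $\sigma=1+s$ from the radial mode $n=1$ (the direction $|\xi|^\sigma$). Degree $k=\lfloor s\rfloor+1=m$ has eigenvalue $m$, and $m\le\sigma<m+1$. So the eigenfunctions below $m+1$ span exactly $\mathcal{Y}_{m}\oplus\mathrm{span}\{|\xi|^\sigma\}$ times the weight $e^{-\lambda|\xi|^{\sigma}/\sigma}$, consistent with $\mathcal{G}^s_{m+1}$. The $n=1$, $k=0$ mode is killed by the optimal choice of $\lambda$, as in Theorem \ref{Tfrac1}.

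\textbf{Step 3: telescoping.} Let $\pi_j$ denote the orthogonal projection onto the complement of $\mathcal{Y}_j e^{-\lambda|\xi|^\sigma/\sigma}$, and $\pi_{m+1}$ the projection onto the complement of the full $\mathcal{G}_{m+1}$ span. With eigenvalue thresholds $\mu_{j}=j+1$ for $j<m$, followed by $\sigma$ and then $m+1$, I would use the layer-cake identity
\[
\langle Lg,g\rangle\ \ge\ \sum_{j}(\mu_{j+1}-\mu_{j})\|\pi_{j}g\|^{2}.
\]
This holds because a vector whose spectral measure sits above $\mu$ contributes at least $\mu$, accumulated level by level. The increments are $1$ for $j=0,\dots,m-1$, then $\sigma-m=\{s\}$, then $m+1-\sigma=1-\{s\}$. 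Their sum is $m+1=\lfloor s\rfloor+2$, as claimed. Since $\|\pi_j g\|^2$ with the fixed $\lambda$ is at least $d_s^2(u,\mathcal{G}^s_j)$ (the infimum allows free $\lambda$), the chain follows.

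\textbf{Main obstacle.} The delicate point is Step 1 combined with the ordering. I must verify that the remainder identity produces exactly the quadratic form of $L$ with no leftover radial term that would shift the $n=1$ eigenvalue. I also need to check the normalization: the constant $\min\{1+s,1\}$ of Theorem \ref{Tfrac1} should be the first gap, and that already fixes the scale. Optimality would come from testing with the eigenfunctions at each threshold, via the perturbations $e^{-\lambda|\xi|^\sigma/\sigma}(1+\varepsilon\,\text{eigenfunction})$. For the $\{s\}$ term these test functions degenerate, which is why that constant is excluded from the sharpness claim.
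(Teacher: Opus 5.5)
Your proposal is correct and is essentially the paper's argument: the Fourier dictionary reduces the claim to $\delta_{2}(\widehat u)$ with $(a,b)=(-s,0)$, Lemma \ref{LCKNidentity} gives the exact remainder, the weighted Dirichlet form is analyzed with spherical harmonics and Laguerre polynomials, and the fixed $\lambda$ is relaxed to an infimum at the end. Your layer-cake over the joint spectrum $2\lambda(k+n\sigma)$ is exactly the picture of Remark \ref{Rspectral}, whereas the paper's proof of Theorem \ref{TfracPoincare} checks the same coefficients node by node via Lemmas \ref{Lfacts} and \ref{Llaguerre}; those lemmas also give the scale directly, so you do not need to infer it from Theorem \ref{Tfrac1}.

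One side claim is wrong but unused. The choice $\lambda=\sqrt{A/B}$ does not kill the radial mode $(k,n)=(0,1)$; it does so only to first order at the optimizers, and for $-1<s<0$ this very mode is what produces the sharp constant $1+s$ of Theorem \ref{Tfrac1}. Your layer-cake does not need that claim, because the threshold $\sigma$ already accounts for the mode.
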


We emphasize that the first term of \eqref{fraccascade} is precisely
\eqref{frac1stab}, so that Theorem \ref{Tfraccascade} is a genuine refinement of
Theorem \ref{Tfrac1}, and that the number of terms is unbounded as $s \to \infty$. It
is also worth noting that the constants do not depend on the dimension.

We state separately the case $s=1$, since it concerns the classical Heisenberg
Uncertainty Principle. When $s=1$ we have $\sigma=2$, and the Hecke--Bochner
formula becomes the classical Hecke identity: if $Y_{k}$ is a solid harmonic of degree
$k$, then the inverse Fourier transform of $Y_{k}(\xi)e^{-\lambda|\xi|^{2}/2}$ is a
nonzero constant multiple of $Y_{k}(x)$ times a Gaussian, the constant depending on $k$
and on $\lambda$. Since the coefficients are free, the sets $\mathcal{G}_{j}^{1}$ are
therefore obtained by multiplying a Gaussian by an arbitrary harmonic polynomial of
degree at most $j$. Moreover $\mathcal{Y}_{2}$ together with $|\xi|^{2}$ spans all the
polynomials of degree at most $2$. Therefore, denoting by
\[
\mathcal{Q}_{j}:=\left\{Q(x)\,e^{-\beta|x|^{2}}\:\
Q \text{ a polynomial of degree at most } j,\ \beta>0\right\},
\]
so that $\mathcal{Q}_{0}=E_{\mathrm{HUP}}$ is the family of the Gaussian optimizers, we obtain
the following improvement of \eqref{HUPstab}.

\begin{corollary}\label{CHUPcascade}
Let $N \geq 2$. For all $u \in C_{0}^{\infty}(\mathbb{R}^{N})$,
\begin{equation}\label{HUPcascade}
\begin{aligned}
&\left(\int_{\mathbb{R}^{N}} |\nabla u|^{2}\,dx\right)^{\frac{1}{2}}
\left(\int_{\mathbb{R}^{N}} |x|^{2} |u|^{2}\,dx\right)^{\frac{1}{2}}
- \frac{N}{2} \int_{\mathbb{R}^{N}} |u|^{2}\,dx\\
&\qquad\geq
\inf_{w \in \mathcal{Q}_{0}}\int_{\mathbb{R}^{N}}|u-w|^{2}dx
+\inf_{w \in \mathcal{Q}_{1}}\int_{\mathbb{R}^{N}}|u-w|^{2}dx
+\inf_{w \in \mathcal{Q}_{2}}\int_{\mathbb{R}^{N}}|u-w|^{2}dx.
\end{aligned}
\end{equation}
\end{corollary}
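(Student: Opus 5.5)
The plan is to obtain Corollary \ref{CHUPcascade} as the case $s=1$ of Theorem \ref{Tfraccascade}. The work consists in identifying the terms of \eqref{fraccascade} with those of \eqref{HUPcascade}.

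Put $s=1$. Then $\lfloor s\rfloor=1$, $\{s\}=0$, $m=2$ and $\sigma=2$. The right-hand side of \eqref{fraccascade} becomes
\[
d_{1}^{2}(u,\mathcal{G}_{0}^{1})+d_{1}^{2}(u,\mathcal{G}_{1}^{1})+0\cdot d_{1}^{2}(u,\mathcal{G}_{2}^{1})+d_{1}^{2}(u,\mathcal{G}_{3}^{1}),
\]
so the term with index $m=2$ disappears. Since $(-\Delta)^{0}$ is the identity, $d_{1}^{2}(u,\mathcal{G})=\inf_{w\in\mathcal{G}}\int_{\mathbb{R}^{N}}|u-w|^{2}dx$. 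On the left-hand side, $\int|(-\Delta)^{1/2}u|^{2}dx=\int|\nabla u|^{2}dx$ and the constant is $\frac{N}{2}$. It therefore suffices to show
\[
\mathcal{G}_{0}^{1}=\mathcal{Q}_{0},\qquad \mathcal{G}_{1}^{1}=\mathcal{Q}_{1},\qquad \mathcal{G}_{3}^{1}=\mathcal{Q}_{2},
\]
where $\mathcal{G}_{3}^{1}=\mathcal{G}_{m+1}^{1}$ is built from $(P+d|\xi|^{2})e^{-\lambda|\xi|^{2}/2}$ with $P\in\mathcal{Y}_{2}$.

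\textbf{Step 1: the Fourier side.} $\mathcal{Y}_{2}+\mathbb{C}|\xi|^{2}$ is the full space of polynomials of degree at most $2$. Indeed, any quadratic form splits as a harmonic quadratic plus a multiple of $|\xi|^{2}$, and constants and linear forms are harmonic. Likewise $\mathcal{Y}_{1}$ and $\mathcal{Y}_{0}$ are all polynomials of degree at most $1$ and $0$.

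\textbf{Step 2: transfer to the physical side.} The inverse Fourier transform of $Q(\xi)e^{-\lambda|\xi|^{2}/2}$, with $Q$ a polynomial of degree at most $j$, is $Q(\tfrac{-1}{2\pi i}\nabla_x)$ applied to the Gaussian $\mathcal{F}^{-1}(e^{-\lambda|\xi|^{2}/2})=c_\lambda e^{-\beta|x|^{2}}$, with $\beta=2\pi^{2}/\lambda$. Differentiating a Gaussian $k$ times produces (polynomial of degree $k$)$\times$Gaussian, so we land in $\mathcal{Q}_{j}$ with the same $\beta$.

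Conversely, fix $\beta$. The map $Q\mapsto$ (polynomial factor) is linear on polynomials of degree at most $j$ and is triangular: the top-degree part of $Q$ is sent to a nonzero multiple of itself, the factor being $(-2\beta\cdot\frac{-1}{2\pi i})^{k}$ in each degree $k$. Hence it is a bijection, and every $R(x)e^{-\beta|x|^{2}}$ with $\deg R\le j$ is attained. Since $\lambda\leftrightarrow\beta$ is a bijection of $(0,\infty)$, the three set equalities follow.

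Substituting these identifications into \eqref{fraccascade} at $s=1$ gives \eqref{HUPcascade}. The only delicate point is the bijectivity in Step 2. The paper's route via Hecke's identity on solid harmonics is an alternative, but the triangularity argument is cleaner and avoids tracking the $k$-dependent constants. Complex coefficients cause no trouble, because $\mathcal{Q}_{j}$ is read with complex polynomials, consistent with $\mathcal{Y}_{j}$ in Theorem \ref{Tfraccascade}.
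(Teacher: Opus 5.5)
Your proof is correct and has the same skeleton as the paper's. You specialize Theorem~\ref{Tfraccascade} to $s=1$, so that $\sigma=2$, $m=2$, the coefficient $\{s\}$ of the $\mathcal{G}_{2}^{1}$ term vanishes and $(-\Delta)^{\frac{s-1}{4}}$ is the identity. Everything then reduces to $\mathcal{G}_{0}^{1}=\mathcal{Q}_{0}$, $\mathcal{G}_{1}^{1}=\mathcal{Q}_{1}$ and $\mathcal{G}_{3}^{1}=\mathcal{Q}_{2}$.

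Where you differ is the identification step. The paper uses three ingredients:
\begin{itemize}
\item the Hecke identity $\mathcal{F}^{-1}(Y_{k}e^{-\lambda|\xi|^{2}/2})=c_{k,\lambda}Y_{k}(x)e^{-2\pi^{2}|x|^{2}/\lambda}$ on each solid harmonic;
\item the formula $\mathcal{F}^{-1}(|\xi|^{2}g)=-\frac{1}{4\pi^{2}}\Delta\mathcal{F}^{-1}(g)$ for the extra radial term;
\item a spanning count: harmonic polynomials of degree at most $2$ together with $|x|^{2}$ give all quadratic polynomials.
\end{itemize}
This keeps the solid-harmonic decomposition visible on the physical side, which is the structure that Hecke--Bochner carries over to $\sigma\neq 2$.

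Your argument instead treats polynomial multipliers as constant-coefficient differential operators and uses
$\partial^{\alpha}e^{-\beta|x|^{2}}=\bigl((-2\beta)^{|\alpha|}x^{\alpha}+\text{lower order}\bigr)e^{-\beta|x|^{2}}$.
This gives, for every $j$ at once, a bijection between polynomials of degree at most $j$ times $e^{-\lambda|\xi|^{2}/2}$ and polynomials of degree at most $j$ times $e^{-\beta|x|^{2}}$. It needs no special-function input and no bookkeeping of the constants $c_{k,\lambda}$.

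One small slip: with the paper's normalization, $\mathcal{F}^{-1}(\xi_{k}g)=\frac{1}{2\pi i}\partial_{x_{k}}\mathcal{F}^{-1}(g)$, not $\frac{-1}{2\pi i}\partial_{x_{k}}\mathcal{F}^{-1}(g)$; your sign is the one for $\mathcal{F}$. This only multiplies your diagonal factors by $(-1)^{k}$, so the triangularity argument is unaffected.
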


The first term of \eqref{HUPcascade} is exactly the sharp stability estimate
\eqref{HUPstab} of \cite{CFLL24}, and the first two terms are the corresponding
statement of \cite{DLLN26}. The third term is new. In words: the Heisenberg deficit
controls, with constant $1$ in each case, the distance to the Gaussians, the distance
to the Gaussians multiplied by a polynomial of degree at most $1$, and the distance to
the Gaussians multiplied by a polynomial of degree at most $2$.

The same mechanism applies to the second family. Here the relevant constants are the
$D_{k}$ of \eqref{Dk}, which are no longer integers, and consequently the statement is
less rigid: the length of the chain depends on the dimension as well as on $s$, and
the constants are no longer equal to $1$. For $\lambda>0$, $j \geq 0$ and
$P \in \mathcal{Y}_{j}^{\ast}$, where
\[
\mathcal{Y}_{j}^{\ast}:=\left\{\sum_{k=0}^{j}|\xi|^{\frac{D_{k}}{2}}
\mathbf{d}_{k}\cdot\mathbf{\Phi}_{k}(\xi/|\xi|)\:\
\mathbf{d}_{k}\in\mathbb{C}^{N_{k}}\right\}
\]
denotes the space generated by the profiles $|\xi|^{D_{k}/2}\phi_{k,i}$ of degree at
most $j$, we set
\begin{equation}\label{Hcal}
\begin{aligned}
\mathcal{H}_{j}^{s}&:=\left\{\mathcal{F}^{-1}
\left(P\,e^{-\frac{\lambda|\xi|^{s}}{s}}\right)
\:\ P \in \mathcal{Y}_{j}^{\ast},\ \lambda>0\right\},\\
\mathcal{H}_{m+1}^{s}&:=\left\{\mathcal{F}^{-1}
\left(\left(P+d|\xi|^{s}\right)e^{-\frac{\lambda|\xi|^{s}}{s}}\right)
\:\ P \in \mathcal{Y}_{m}^{\ast},\ d \in \mathbb{C},\ \lambda>0\right\}.
\end{aligned}
\end{equation}
Again $\mathcal{H}_{0}^{s}=\{\alpha\,G_{s,\lambda}\}$ is the family of the optimizers of
Theorem \ref{Tfrac2}, and the families are nested.

\begin{theorem}\label{Tfrac2cascade}
Let $N \geq 2$ and let $s \geq \frac{D_{2}-D_{1}}{2}$. Let $m \geq 1$ be the largest integer
such that
\begin{equation}\label{mcond}
D_{m}\leq 2s
\qquad\text{and}\qquad
D_{m+1}-D_{1}\leq 2s,
\end{equation}
and set
\[
\begin{aligned}
&\beta_{j}:=D_{j+1}-D_{j}\quad (0\leq j\leq m-1),\\
&\beta_{m}:=\min\left\{2s,\,D_{m+1}\right\}-D_{m},
\qquad
\beta_{m+1}:=\left(D_{m+1}-2s\right)^{+},
\end{aligned}
\]

Then, for all $u \in C_{0}^{\infty}(\mathbb{R}^{N})$,
\begin{equation}\label{frac2cascade}
\begin{aligned}
&\left(\int_{\mathbb{R}^{N}}\left|(-\Delta)^{s/2}u\right|^{2}dx\right)^{\frac{1}{2}}
\left(\int_{\mathbb{R}^{N}}|x|^{2}|\nabla u|^{2}\,dx\right)^{\frac{1}{2}}
-\frac{N+s}{2}
\int_{\mathbb{R}^{N}}\left|(-\Delta)^{s/4}u\right|^{2}dx\\
&\qquad\geq\ \sum_{j=0}^{m+1}\frac{\beta_{j}}{2}
\inf_{w \in \mathcal{H}_{j}^{s}}\int_{\mathbb{R}^{N}}
\left|(-\Delta)^{s/4}(u-w)\right|^{2}dx.
\end{aligned}
\end{equation}
The constants add up to $\frac{D_{m+1}}{2}$, and the first of them,
$\frac{\beta_{0}}{2}=\frac{D_{1}}{2}$, is the sharp constant of \eqref{frac2stab}. In
fact, the constants in \eqref{frac2cascade} cannot be improved, with the possible
exception of $\beta_{m}$; see Remark \ref{Rspectral}.
\end{theorem}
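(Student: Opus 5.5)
Pass to the Fourier side. By \eqref{dict1} and \eqref{dict2}, the left-hand side of \eqref{frac2cascade} equals $(2\pi)^{s}$ (up to the fixed power of $2\pi$ coming from the normalization) times the CKN deficit $\delta_{2}(v)$ of $v=\widehat u$, with parameters $(a,b)=(-s,-1)$. Here $\int|(-\Delta)^{s/4}(u-w)|^{2}dx$ becomes $\int|v-\widehat w|^{2}|\xi|^{s}d\xi$, the weight $|\xi|^{-(a+b+1)}=|\xi|^{s}$. So it suffices to prove the chain for $\delta_{2}(v)$ with weight $|\xi|^{s}$ and competitors $P e^{-\lambda|\xi|^{s}/s}$, $P\in\mathcal Y^{*}_{j}$ (plus the $d|\xi|^{s}$ term at the last level).

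Next, use the exact remainder identity behind Theorem \ref{TB} (Lemma \ref{LCKNidentity}). Fix the scaling $\lambda$ that balances the two factors (the optimal $\lambda$ for $v$). Write $v=e^{-\lambda|\xi|^{s}/s}g$. Then $\delta_{2}(v)$ equals, up to the factor $\tfrac12$ and the normalisation, a quadratic form
\[
\int|\nabla g|^{2}|\xi|^{2}\,d\mu_\lambda
\]
with $d\mu_\lambda=e^{-2\lambda|\xi|^{s}/s}\,d\xi$, minus a multiple of $\int|g|^{2}|\xi|^{s}d\mu_\lambda$ that produces exactly the deficit. Decompose $g$ in spherical harmonics, $g=\sum_{k,i}g_{k,i}(r)\phi_{k,i}$. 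On mode $k$ the operator is a one-dimensional weighted Sturm--Liouville operator. After the substitution $t=r^{s}$ its spectrum is explicit, of Laguerre/Kummer type: eigenvalues $D_{k}+2sn$, $n\geq0$ (in the normalized units in which the HUP gap on mode $0$ is $2s$), with ground state $r^{D_{k}/2}$. Thus the full spectrum is $\{D_{k}+2sn\}$. Its lowest values, ordered using \eqref{ordering} and \eqref{mcond}, are
\[
0=D_{0}<D_{1}<\dots<D_{m},
\]
then $\min\{2s,D_{m+1}\}$, and then the next level. That level is $\ge \min$ of $D_{m+1}$, $2s+D_{1}$ and $2s$ (the latter counted as the radial $n=1$ mode, which is exactly the $d|\xi|^{s}$ direction). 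Condition \eqref{mcond} guarantees that no $n\ge1$ mode of degree $k\ge1$ (smallest $2s+D_{1}$) intrudes below $D_{m+1}$.

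Then run the standard layer-cake/telescoping argument. Let $\Pi_{j}$ be the orthogonal projection (in $L^{2}(|\xi|^{s}d\mu_\lambda)$) onto the span $V_j$ of the ground states of modes $0,\dots,j$; $V_{m+1}$ also includes $|\xi|^{s}$. For a self-adjoint operator $L\geq0$ with eigenvalues $\mu_{0}=0\le\mu_{1}\le\dots$ one has
\[
\langle Lg,g\rangle\ \geq\ \sum_{j}(\mu_{j+1}-\mu_{j})\,\|g-\Pi_{j}g\|^{2},
\]
with the last gap replaced by the remaining spectrum bound. This gives
\[
\delta_{2}(v)\ge \sum_j \tfrac{\beta_j}{2}\,\|g-\Pi_j g\|^2_{|\xi|^s d\mu_\lambda}.
\]
The left side has no dependence on the choice of $\lambda$ beyond the fixed optimal one. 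Each $\|g-\Pi_{j}g\|^{2}$ equals $\int|v-Pe^{-\lambda|\xi|^s/s}|^{2}|\xi|^{s}d\xi$ for some $P$ in the relevant class. This is at least the infimum over $\mathcal H^{s}_{j}$, which gives \eqref{frac2cascade}. The gaps telescope to $D_{m+1}$ because
\[
\beta_m+\beta_{m+1}=D_{m+1}-D_m \quad\text{whether or not } 2s\le D_{m+1}.
\]

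The main obstacle is the exact remainder identity and the spectral computation with $b=-1$ on each spherical mode. One must check that the ground state on mode $k$ is $r^{D_k/2}e^{-\lambda r^{s}/s}$, which matches $D_k$ from \eqref{Dk}. One must also check that the first excited radial level is exactly $2s$ above the ground state, which requires the Kummer equation after $t=r^{s}$. These I would take from \cite{DLLN26} (the weighted Poincaré inequalities with all spectral levels). Sharpness of the constants other than $\beta_m$ follows by testing on $v=(P+\varepsilon\,\text{eigenfunction})e^{-\lambda|\xi|^s/s}$ at the corresponding level, where the deficit is second order in $\varepsilon$ with the exact gap.
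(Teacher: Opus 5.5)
Your proposal is correct and follows essentially the paper's argument. The dictionary turns the left-hand side into $(2\pi)^{s}\delta_{2}(\widehat{u})$ with $(a,b)=(-s,-1)$. The exact remainder identity at the balancing scale $\mu$ then reduces everything to the form $\int|\nabla g|^{2}|\xi|^{2}e^{-2\mu|\xi|^{s}/s}\,d\xi$, whose spectrum on the spherical-harmonic/Laguerre modes is $\mu(D_{k}+2sn)$. Your telescoping over the ordered spectrum is exactly the viewpoint of Remark~\ref{Rspectral}. The written proof instead checks, node by node, the equivalent linear admissibility constraints on the $\beta_{j}$ coming from Lemma~\ref{Lfacts2} and Lemma~\ref{Llaguerre}, and then transfers them by Proposition~\ref{Pfourierstab}.

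One slip: the identity of Lemma~\ref{LCKNidentity} is exact, $2\mu\,\delta_{2}(v)=\int_{\mathbb{R}^{N}}|\nabla g|^{2}|\xi|^{2}e^{-2\mu|\xi|^{s}/s}\,d\xi$ with $g=v\,e^{\mu|\xi|^{s}/s}$, and no multiple of $\int|g|^{2}|\xi|^{s}e^{-2\mu|\xi|^{s}/s}\,d\xi$ is subtracted. This exact form is precisely what your later use of a nonnegative operator, with bottom eigenvalue $0$ on the constants, requires.
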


We note that the assumption $s \geq \frac{D_{2}-D_{1}}{2}$ guarantees that $m=1$ is
admissible in \eqref{mcond}, since $D_{1}<D_{2}-D_{1}$ for every $N \geq 2$ by
\eqref{ordering2} of Lemma \ref{Lordering}. It is
therefore stronger than the condition $C_{\mathrm{PI}}(-s,-1,N)=D_{1}$, which only says that the
spherical harmonic mode of degree $1$, and not the second radial mode, is the first one
to be removed. In the complementary range $0<s<\frac{D_{2}-D_{1}}{2}$, a chain still
holds, with a different structure; we do not write it down here. We also point out that, in contrast with Theorem \ref{Tfraccascade}, the
number $m$ in \eqref{mcond} depends on the dimension: for instance, when
$s=\frac{7}{4}$ one has $m=2$ for $2 \leq N \leq 7$, while $m=1$ for every $N \geq 8$.

We now return to the second order HUP, that is, to the value $s=2$ of the second
family. Here we are in a much more favorable situation than the one of the
Bianchi--Egnell inequality \eqref{StabilityS}, where the sharp constant
$c_{\mathrm{BE}}$ is still known only through upper and lower bounds: the constant in
\eqref{DoLLstab} is explicit, and we know exactly the functions for which the equality
holds. Indeed, \eqref{DoLLstab} is only the first term of the chain given by Theorem
\ref{Tfrac2cascade} with $s=2$, where $m=2$ and the four constants are
$\frac{D_{1}}{2}$, $\frac{D_{2}-D_{1}}{2}$, $\frac{4-D_{2}}{2}$ and $\frac{D_{3}-4}{2}$
by Lemma \ref{Lordering}. Since in this case $\sigma=2$, the Hecke--Bochner formula can
be inverted explicitly, and the competitor families become explicit families of
confluent hypergeometric profiles in the physical variable. Moreover, the four remainder
terms can be coupled, that is, measured with one single set of parameters, which is a
genuine strengthening that is not contained in Theorem \ref{Tfrac2cascade}. Its
counterpart on the Fourier side is a chain of weighted Poincar\'e inequalities for the
measure $e^{-|\xi|^{2}}|\xi|^{2}d\xi$, stated as Theorem \ref{TimprovedPoincare} in
Subsection \ref{Schains}; its first two layers recover the corresponding statement of
\cite[Section 2]{DLLN26}, while the third and the fourth ones are new. In order to state the result in the physical variable, we introduce
the two radial profiles
\begin{equation}\label{Theta}
\begin{aligned}
\Theta_{1}(x;\beta)
&:={}_{1}F_{1}\!\left(\frac{D_{1}}{4}+\frac{N+1}{2},\ \frac{N+2}{2};\
-\beta|x|^{2}\right),\\[3pt]
\Theta_{2}(x;\beta)
&:={}_{1}F_{1}\!\left(\frac{D_{2}}{4}+\frac{N}{2}+1,\ \frac{N}{2}+2;\
-\beta|x|^{2}\right),
\end{aligned}
\end{equation}
where ${}_{1}F_{1}$ denotes the confluent hypergeometric function of Kummer, together
with the second radial Gaussian profile
\begin{equation}\label{Lprofile}
\mathcal{L}(x;\beta):=\left(2\beta|x|^{2}-\frac{N-2}{2}\right)e^{-\beta|x|^{2}}.
\end{equation}

\begin{theorem}\label{Tcascade}
Let $N \geq 2$ and let $u \in X$, real- or complex-valued. For
$P=(\alpha,\tau,\mathbf{a},\mathbf{b},\beta) \in \mathbb{C}\times\mathbb{C}
\times\mathbb{C}^{N}\times\mathbb{C}^{N_{2}}\times(0,\infty)$, set
\begin{equation}\label{wj}
\begin{aligned}
w_{0}(\cdot;P)&:=\alpha e^{-\beta|x|^{2}},
&w_{1}(\cdot;P)&:=w_{0}+\Theta_{1}(x;\beta)\,\mathbf{a}\cdot x,\\
w_{2}(\cdot;P)&:=w_{1}+\Theta_{2}(x;\beta)\,|x|^{2}\,\mathbf{b}\cdot\mathbf{\Phi}_{2}(x),
&w_{3}(\cdot;P)&:=w_{2}+\tau\,\mathcal{L}(x;\beta).
\end{aligned}
\end{equation}
Then
\begin{equation}\label{coupled}
\begin{aligned}
\delta_{\mathrm{S}}(u) \geq \inf_{P}\Bigg[&
\frac{D_{1}}{2}\left\|\nabla(u-w_{0})\right\|_{2}^{2}
+\frac{D_{2}-D_{1}}{2}\left\|\nabla(u-w_{1})\right\|_{2}^{2}\\
&+\frac{4-D_{2}}{2}\left\|\nabla(u-w_{2})\right\|_{2}^{2}
+\frac{D_{3}-4}{2}\left\|\nabla(u-w_{3})\right\|_{2}^{2}\Bigg],
\end{aligned}
\end{equation}
and, if $u$ is real-valued, the infimum can equivalently be taken over the real
parameters.
\end{theorem}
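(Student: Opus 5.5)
We pass to the Fourier side and prove the chain there, as a coupled chain of weighted Poincaré inequalities. By \eqref{key}, $\delta_{\mathrm{S}}(u)=(2\pi)^{2}\delta_{2}(\widehat u)$, where $\delta_{2}$ is the CKN deficit with $(a,b)=(-2,-1)$. By \eqref{dict1} with $\kappa=1$ we also have $\|\nabla(u-w)\|_{2}^{2}=(2\pi)^{2}\int|\xi|^{2}|\widehat u-\widehat w|^{2}d\xi$. It therefore suffices to prove, for $v=\widehat u$,
\[
\delta_{2}(v)\ge \inf_{P}\sum_{j=0}^{3}\frac{\beta_{j}}{2}\int_{\mathbb{R}^{N}}|\xi|^{2}|v-\widehat{w_{j}}|^{2}d\xi ,
\]
with $\beta_{0}=D_{1}$, $\beta_{1}=D_{2}-D_{1}$, $\beta_{2}=4-D_{2}$, $\beta_{3}=D_{3}-4$.

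\textbf{Step 1: identify $\widehat{w_{j}}$.} Since $\sigma=2$, the competitors on the Fourier side are $e^{-\lambda|\xi|^{2}/2}$ times, respectively: a constant; $|\xi|^{D_{1}/2}\mathbf{d}_{1}\cdot\mathbf{\Phi}_{1}$; $|\xi|^{D_{2}/2}\mathbf{d}_{2}\cdot\mathbf{\Phi}_{2}$; and $|\xi|^{2}$. We verify that the inverse Fourier transforms give exactly the profiles in \eqref{wj}. By the Hecke--Bochner formula, the transform of $|\xi|^{D_{k}/2}\phi_{k}(\xi/|\xi|)e^{-\lambda|\xi|^{2}/2}$ is $\phi_{k}(x/|x|)$ times a Hankel transform of order $N/2+k-1$ of $r^{D_{k}/2}e^{-\lambda r^{2}/2}$. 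The standard Weber integral $\int_{0}^{\infty}r^{\mu}e^{-pr^{2}}J_{\nu}(cr)\,dr$ turns this into $|x|^{k}\,{}_{1}F_{1}\bigl(\tfrac{\nu+\mu+1}{2},\nu+1;-c^{2}/4p\bigr)$. Substituting $\mu=D_{k}/2+N/2+k$ and $\nu=N/2+k-1$ gives the parameters in \eqref{Theta}, with $\beta=\pi^{2}\cdot 2/\lambda$. For the fourth competitor, the transform of $|\xi|^{2}e^{-\lambda|\xi|^{2}/2}$ is a Laplacian of a Gaussian, which is proportional to $\mathcal{L}$ up to adding a Gaussian; that Gaussian is absorbed into $\alpha$. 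All proportionality constants are absorbed into the free coefficients $\alpha,\mathbf a,\mathbf b,\tau$.

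\textbf{Step 2: the exact remainder identity.} By Lemma \ref{LCKNidentity}, at the optimal scaling $\lambda$ (chosen so that the two factors of $\delta_{2}$ balance), $\delta_{2}(v)$ is bounded below by half of a quadratic form $Q_{\lambda}(g)$, namely a weighted Dirichlet energy in the measure $d\mu_{\lambda}=|\xi|^{2}e^{-\lambda|\xi|^{2}}d\xi$, applied to $g=v\,e^{\lambda|\xi|^{2}/2}$. This form is $\langle \mathcal{T}g,g\rangle_{L^{2}(\mu_{\lambda})}$ for an explicit Laguerre-type operator $\mathcal T$. Decomposing in spherical harmonics and Laguerre polynomials, its spectrum begins with $0$ (constants), then $D_{1}$ (mode $k=1$, eigenfunction $|\xi|^{D_{1}/2}\mathbf{\Phi}_{1}$), then $D_{2}$ ($k=2$), then $4$ (second radial mode, eigenfunction an affine function of $|\xi|^{2}$), then $D_{3}$. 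This uses the ordering \eqref{ordering} from Lemma \ref{Lordering}; all further eigenvalues are at least $D_{3}$, since eigenvalues increase in both $k$ and the radial index.

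\textbf{Step 3: the coupled chain (Theorem \ref{TimprovedPoincare}).} Expand $g$ in an orthonormal eigenbasis of $L^{2}(\mu_{\lambda})$ with eigenvalues $0=\mu_{0}<\mu_{1}\le\cdots$, and let $\Pi_{j}$ denote the projection onto the first $j+1$ eigenspaces. Then
\[
Q(g)=\sum_{n}\mu_{n}|g_{n}|^{2}\ge\sum_{j=0}^{3}(\mu_{j+1}-\mu_{j})\|g-\Pi_{j}g\|^{2},
\]
because the right-hand side equals $\sum_{n}\min(\mu_{n},D_{3})|g_{n}|^{2}$. The key point is that a single $\lambda$ and the single set of coefficients $g_{n}$ serve all four layers. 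Multiplying back by $e^{-\lambda|\xi|^{2}/2}$ gives $\widehat{w_{j}}=\Pi_{j}g\,e^{-\lambda|\xi|^{2}/2}$, which all share the parameters $P$. This yields \eqref{coupled}. For real $u$, $\widehat u$ is Hermitian, so taking real and imaginary parts shows the optimal coefficients are real.

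\textbf{Main obstacle.} The delicate point is Step 2. Lemma \ref{LCKNidentity} gives the deficit as the form only after optimizing over $\lambda$ (through the AM--GM step), and we must check that the remainder is controlled by $Q_{\lambda}$ with exactly the weight $|\xi|^{2}$, with no loss. We must also confirm the eigenvalue $4$ for the second radial mode with $k=0$, and that no other mode slips below $D_{3}$. I would first verify the radial Laguerre eigenvalues directly: in $t=\lambda r^{2}$, the operator becomes a Laguerre operator whose radial eigenvalues are $4n$ plus $D_{k}$.
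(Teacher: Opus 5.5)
Your architecture is the paper's. You transfer via $\delta_{\mathrm{S}}(u)=(2\pi)^{2}\delta_{2}(\widehat u)$ and use the exact remainder of Lemma \ref{LCKNidentity} at $\lambda=\mu$. You then diagonalize the Gaussian Dirichlet form by spherical harmonics times Laguerre polynomials, with eigenvalues $\mu(D_k+4n)$ exactly as in Remark \ref{Rspectral}, and couple the layers through nested orthogonal projections. Getting the chain and its coupling in one spectral step, rather than node-by-node admissibility followed by Theorem \ref{TcoupledPoincare}, is a legitimate streamlining.

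There is, however, a genuine error at exactly the point where the coupled statement is delicate: your identification of $w_3$. In Step 1 you take the fourth Fourier-side profile to be $|\xi|^{2}e^{-\lambda|\xi|^{2}/2}$. You note that its inverse transform is a multiple of $\mathcal{L}$ plus a Gaussian, and you absorb that Gaussian into $\alpha$. In \eqref{coupled} this is not allowed. $\alpha$ is one parameter shared by $w_0,\dots,w_3$, so you cannot change the Gaussian coefficient of $w_3$ alone. With the naive profile and the Gaussian coefficient coupled to that of $w_0$, the coupled inequality is in fact false (Remark \ref{Rnaive}). As written, this step breaks the proof.

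The fix is already latent in your Step 3. The eigenvalue-$4$ eigenfunction is not $|\xi|^{2}$. It is the affine radial function orthogonal to $1$ in $L^{2}(|\xi|^{2}e^{-\lambda|\xi|^{2}}d\xi)$, namely $L_{1}^{N/2}(\lambda|\xi|^{2})=\frac N2+1-\lambda|\xi|^{2}$. With $\beta=2\pi^{2}/\lambda$, Lemma \ref{LinvFourier}(i)--(ii) give
\[
\mathcal{F}^{-1}\Bigl(L_{1}^{N/2}\bigl(\lambda|\xi|^{2}\bigr)e^{-\frac{\lambda|\xi|^{2}}{2}}\Bigr)
=\Bigl(\frac{2\pi}{\lambda}\Bigr)^{N/2}\Bigl[\Bigl(\frac N2+1\Bigr)-N+2\beta|x|^{2}\Bigr]e^{-\beta|x|^{2}}
=\Bigl(\frac{2\pi}{\lambda}\Bigr)^{N/2}\mathcal{L}(x;\beta),
\]
exactly, with no Gaussian left over. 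This is why $\mathcal{L}$ carries the specific constant $-\frac{N-2}{2}$. This computation, not absorption, is what makes $\mathcal{F}^{-1}(\Pi_3 g\,e^{-\lambda|\xi|^{2}/2})$ equal to $w_3(\cdot;P)$ with the same $\alpha$ as in $w_0,w_1,w_2$.

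Smaller points:
\begin{itemize}
\item In the Weber integral the power of $r$ is $\frac{D_k}{2}+\frac N2$, not $\frac{D_k}{2}+\frac N2+k$. Your value would give first Kummer parameter $\frac{D_k}{4}+\frac N2+k$ instead of $\frac{D_k}{4}+\frac{N+k}{2}$.
\item Lemma \ref{LCKNidentity} is an exact identity with prefactor $\frac{1}{2\mu}$. That prefactor cancels the factor $\mu$ in the eigenvalues, so there is no AM--GM loss to control.
\item Monotonicity of $D_k+4n$ in $k$ and $n$ does not by itself put $D_3$ fifth. You need $D_3<D_1+4$ from \eqref{ordering} to keep the first radial excitation of the $k=1$ node above $D_3$.
\end{itemize}
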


Let $u$ be real-valued, so that the infimum in \eqref{coupled} may be taken over the
real parameters. Keeping only the first term and noting that the profiles
$w_{0}=\alpha e^{-\beta|x|^{2}}$ are exactly the elements of $E_{\mathrm{SHUP}}$, we
recover \eqref{DoLLstab} with its sharp constant $\frac{D_{1}}{2}$. The remaining three
terms are new, and they give the stability of \eqref{DoLLstab} itself. Indeed, for every
$P=(\alpha,\tau,\mathbf{a},\mathbf{b},\beta)$ we have
$w_{1}(\cdot;P)=w_{3}(\cdot;P_{1})$ and $w_{2}(\cdot;P)=w_{3}(\cdot;P_{2})$, where
$P_{1}$ and $P_{2}$ are obtained from $P$ by setting $\tau=0$, and also $\mathbf{b}=0$
in the case of $P_{1}$. Hence each of the last three terms of \eqref{coupled} is bounded
from below by $\inf_{P'}\|\nabla(u-w_{3}(\cdot;P'))\|_{2}^{2}$, and, since
$(D_{2}-D_{1})+(4-D_{2})+(D_{3}-4)=D_{3}-D_{1}$, we obtain
\[
\delta_{\mathrm{S}}(u)
-\frac{D_{1}}{2}\inf_{u^{*}\in E_{\mathrm{SHUP}}}
\left\|\nabla(u-u^{*})\right\|_{2}^{2}
\ \geq\ \frac{D_{3}-D_{1}}{2}\,
\inf_{P}\left\|\nabla\left(u-w_{3}(\cdot;P)\right)\right\|_{2}^{2} .
\]
Each of the four constants is optimal on the spherical harmonic node that governs it;
see Subsection \ref{Schains}, and also Remark \ref{Rspectral}, where the third of
them, $\frac{4-D_{2}}{2}$, is the one corresponding to the index $j=m$.

In $w_{3}$, the profile $\mathcal{L}$ cannot be replaced by the naive radial part
$(\alpha_{1}+\alpha_{2}|x|^{2})e^{-\beta|x|^{2}}$ with $\alpha_{1}$ coupled to the
Gaussian coefficient of $w_{0}$: for each fixed $\beta$ the two radial families span the
same two dimensional space, but with that parametrization \eqref{coupled} is no longer
true; see Remark \ref{Rnaive}.

As $N \to \infty$, one has $D_{1}=2-\frac{4}{N}+O(N^{-2})$,
$D_{2}=4-\frac{8}{N}+O(N^{-2})$ and $D_{3}=6-\frac{12}{N}+O(N^{-2})$, so that the four
constants in \eqref{coupled} tend to $1$, $1$, $0$ and $1$; in particular, the finest
remainder term carries asymptotically the same weight as the first one.

Since a curl-free vector field is a gradient, Theorem \ref{Tcascade} gives us at once
an improved stability estimate for the HUP with curl-free vector fields
\eqref{HUPcurl}. Indeed, let
\[
\delta_{\mathrm{CF}}(\mathbf{U}):=
\left(\int_{\mathbb{R}^{N}} |\nabla \mathbf{U}|^{2}\,dx\right)^{\frac{1}{2}}
\left(\int_{\mathbb{R}^{N}} |x|^{2} |\mathbf{U}|^{2}\,dx\right)^{\frac{1}{2}}
-\frac{N+2}{2}\int_{\mathbb{R}^{N}} |\mathbf{U}|^{2}\,dx,
\]
and let $E_{\mathrm{CF}}:=\left\{\alpha e^{-\beta|x|^{2}}x
:\alpha \in \mathbb{R},\ \beta>0\right\}$.

\begin{corollary}\label{Ccurlfree}
Let $N \geq 2$ and let $\mathbf{U}=\nabla u$ be a curl-free vector field with
$u \in X$, possibly complex-valued. Then, with $w_{0},\ldots,w_{3}$ as in \eqref{wj},
\begin{equation}\label{curlcoupled}
\begin{aligned}
\delta_{\mathrm{CF}}(\mathbf{U}) \geq \inf_{P}\Bigg[&
\frac{D_{1}}{2}\left\|\mathbf{U}-\nabla w_{0}\right\|_{2}^{2}
+\frac{D_{2}-D_{1}}{2}\left\|\mathbf{U}-\nabla w_{1}\right\|_{2}^{2}\\
&+\frac{4-D_{2}}{2}\left\|\mathbf{U}-\nabla w_{2}\right\|_{2}^{2}
+\frac{D_{3}-4}{2}\left\|\mathbf{U}-\nabla w_{3}\right\|_{2}^{2}\Bigg],
\end{aligned}
\end{equation}
and, if $\mathbf{U}$ is real-valued, the infimum can equivalently be taken over the real
parameters. In particular, keeping only the first term and noting that
$\nabla\left(\alpha e^{-\beta|x|^{2}}\right)=-2\alpha\beta\,e^{-\beta|x|^{2}}x$ runs
over $E_{\mathrm{CF}}$, we obtain, for real-valued $\mathbf{U}$,
\[
\delta_{\mathrm{CF}}(\mathbf{U}) \geq
\frac{D_{1}}{2}\inf_{\mathbf{U}^{*}\in E_{\mathrm{CF}}}
\|\mathbf{U}-\mathbf{U}^{*}\|_{2}^{2},
\]
and this constant is sharp.
\end{corollary}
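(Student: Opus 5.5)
The plan is to reduce Corollary \ref{Ccurlfree} directly to Theorem \ref{Tcascade}, writing $\mathbf{U}=\nabla u$ and translating every quantity.

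\textbf{Deficits.} Since $\mathbf{U}=\nabla u$ is curl-free with $u\in X$, we have $|\nabla\mathbf{U}|^{2}=\sum_{i,j}|\partial_{i}\partial_{j}u|^{2}$. Integrating by parts twice, first for $u\in C_{0}^{\infty}(\mathbb{R}^{N})$ and then for $u\in X$ by density, gives
\[
\int_{\mathbb{R}^{N}}|\nabla\mathbf{U}|^{2}\,dx=\int_{\mathbb{R}^{N}}|\Delta u|^{2}\,dx .
\]
On the Fourier side this is simply $\sum_{i,j}\xi_{i}^{2}\xi_{j}^{2}=|\xi|^{4}$, which also handles complex-valued $u$. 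Trivially $\int|x|^{2}|\mathbf{U}|^{2}=\int|x|^{2}|\nabla u|^{2}$ and $\int|\mathbf{U}|^{2}=\int|\nabla u|^{2}$. Hence $\delta_{\mathrm{CF}}(\mathbf{U})=\delta_{\mathrm{S}}(u)$.

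\textbf{Distance terms and the chain.} For every parameter $P$ we have $\|\mathbf{U}-\nabla w_{j}\|_{2}=\|\nabla(u-w_{j})\|_{2}$. Substituting into \eqref{coupled} gives \eqref{curlcoupled} verbatim. The remark that the infimum may be taken over real parameters for real $\mathbf{U}$ transfers as well. Indeed, if $\mathbf{U}$ is real then $u$ can be chosen real, since $\nabla(\operatorname{Im}u)=\operatorname{Im}\mathbf{U}=0$ and so $\operatorname{Im}u$ is constant. The constant must vanish because $u\in X$, the completion of $C_{0}^{\infty}$ under a seminorm that only sees $\nabla u$. Alternatively, we simply subtract the constant, which changes nothing in the quantities involved. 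So this part is routine bookkeeping.

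\textbf{First-term consequence and sharpness.} Keeping only the first term and using that all the coefficients $\frac{D_{2}-D_{1}}{2}$, $\frac{4-D_{2}}{2}$, $\frac{D_{3}-4}{2}$ are nonnegative by \eqref{ordering}, we get
\[
\delta_{\mathrm{CF}}(\mathbf{U})\ \ge\ \frac{D_{1}}{2}\inf_{\alpha,\beta}\|\mathbf{U}-\nabla(\alpha e^{-\beta|x|^{2}})\|_{2}^{2},
\]
and $\nabla(\alpha e^{-\beta|x|^{2}})=-2\alpha\beta e^{-\beta|x|^{2}}x$ ranges over $E_{\mathrm{CF}}$ as $\alpha\in\mathbb{R}$, $\beta>0$ vary. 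Sharpness follows by the same dictionary. If a smaller constant failed, i.e. some $\mathbf{U}$ violated the inequality with a constant $c>D_{1}/2$, then its potential $u$ would violate \eqref{DoLLstab} with constant $c$. This contradicts the sharpness of $\frac{D_{1}}{2}$ in Theorem \ref{TC}: an extremal (or extremizing sequence) $u$ for \eqref{DoLLstab} yields $\mathbf{U}=\nabla u$ with equal ratios.

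\textbf{Main obstacle.} The only delicate step is justifying the integration-by-parts identity $\int|D^{2}u|^{2}=\int|\Delta u|^{2}$ and the equality of the distance sets for general $u\in X$ rather than for smooth compactly supported $u$. Plancherel settles the first point, and the second is exact since $\nabla w=\mathbf{U}^{*}$ for the corresponding elements. I would carry this out via the Fourier dictionary of Lemma \ref{Ldictionary}.
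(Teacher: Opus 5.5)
Your proposal is correct and is essentially the paper's proof: you identify $\delta_{\mathrm{CF}}(\mathbf{U})=\delta_{\mathrm{S}}(u)$ and $\|\mathbf{U}-\nabla w_{j}\|_{2}=\|\nabla(u-w_{j})\|_{2}$, read off \eqref{curlcoupled} from Theorem \ref{Tcascade}, and inherit sharpness from Theorem \ref{TC} (the paper simply cites the earlier work of Do, Lam and Lu for this). One fix to the sharpness paragraph: a violation with some constant $c>D_{1}/2$ is exactly what sharpness asserts, not a contradiction, so the argument should run as in your last clause --- for each $c>D_{1}/2$, take a real $u\in X$ that violates \eqref{DoLLstab} with constant $c$, and observe that $\mathbf{U}=\nabla u$ then violates the curl-free estimate with the same $c$.
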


\subsection{Comparison with the literature}\label{Scomparison}

Uncertainty principles in which the order of the derivative is fractional have been
considered before. In this subsection we describe the results that are closest to ours,
and we explain precisely what is new here.

On the one hand, Steinerberger \cite{Ste20} and then Xiao \cite{Xia22} studied an
$L^{p}$-uncertainty principle for the positively ordered pair
$\left\{(-\Delta)^{\alpha/2},(-\Delta)^{\beta/2}\right\}$. Their approach is based on
Fourier analysis and on the fractional Schr\"odinger equation, and their result reads
as follows:
\begin{equation}\label{Xiao}
\|f\|_{L^{p}}^{\alpha+\beta} \leq \kappa\,
\left\||\cdot|^{\alpha}f\right\|_{L^{p}}^{\beta}
\left\||\cdot|^{\beta}\widehat{f}\right\|_{L^{p'}}^{\alpha},
\end{equation}
which was later extended to the Lorentz spaces $L^{p,q}(\mathbb{R}^{N})$ by Fu and Xiao
\cite{FX23}. On the other hand, Kumar, Ponce Vanegas and Vega \cite{KPV22} gave a new
proof of the fractional uncertainty principle of Hirschman,
\begin{equation}\label{KPV}
\left\||x|^{\delta}f\right\|_{L^{2}}
\left\||\xi|^{\delta}\widehat{f}\right\|_{L^{2}}
\ \geq\ a_{\delta}^{2}\,\|f\|_{L^{2}}^{2},
\qquad 0<\delta<1,
\end{equation}
and studied the corresponding dynamical version along the Schr\"odinger flow.

In all of these results the constants are not explicit. In \eqref{Xiao} the constant
$\kappa$ depends on $\{\alpha,\beta,N,p\}$ and is obtained through a normalization
argument, so that neither its optimal value nor the corresponding optimizers are known.
In \eqref{KPV} the infimum is attained, and the minimizer $Q_{\delta}$ is shown to
satisfy $Q_{\delta}(x)\approx|x|^{-N-4\delta}$ at infinity, but the value of
$a_{\delta}$ is not known; determining it is in fact listed as an open problem in
\cite{KPV22}. In our results, by contrast, the constants are computed exactly, the
optimizers are characterized, and we also obtain the sharp stability estimates.

The essential difference lies in the weights. In \eqref{Xiao} and \eqref{KPV}, the
function $f$ is measured against the power weights $|x|^{\alpha}$ and $|x|^{\delta}$
with arbitrary exponents, while in \eqref{frac1} and \eqref{frac2} the weight is always
the first order one, namely $|x|^{2}|u|^{2}$ and $|x|^{2}|\nabla u|^{2}$, and it is the
order of the derivative that varies. This is not a matter of presentation. Indeed, on
the Fourier side, $\||\cdot|^{\delta}f\|_{L^{2}}$ becomes the fractional order term
$\|(-\Delta_{\xi})^{\delta/2}\widehat{f}\|_{L^{2}}$, which belongs to the first order
family \eqref{CKN} only when $\delta=1$; see Remark \ref{Ronlytwolines}. The symmetric
weights of \eqref{KPV} therefore require a control of the commutator
$(-\Delta)^{\kappa/2}(x_{j}u)-x_{j}(-\Delta)^{\kappa/2}u$, which is the object of the
forthcoming paper announced in Remark \ref{Ronlytwolines}; this also suggests a
structural reason for the fact that the sharp constant $a_{\delta}$ in \eqref{KPV} is
still unknown. What all these results have in common is the classical Heisenberg
Uncertainty Principle, which corresponds to $p=2$, $\alpha=\beta=1$ in \eqref{Xiao}, to
the endpoint $\delta=1$ of \eqref{KPV}, and to $s=1$ in \eqref{frac1}.

It is worth noting that the optimizers in Theorem \ref{Tfrac1} and Theorem
\ref{Tfrac2} are Gaussians only in the two classical cases $s=1$ and $s=2$. Indeed, for
$0<\sigma<2$ the profile $G_{\sigma,\lambda}$ defined in \eqref{profile} is, up to a
normalization, the symmetric $\sigma$-stable density, and it decays like
$|x|^{-N-\sigma}$ at infinity. In this respect our optimizers behave like the minimizer
$Q_{\delta}$ of \cite{KPV22} recalled above: the fractional order
destroys the Gaussian decay. The difference is that here the profiles are known
explicitly, on the Fourier side, for every $s>0$.

\subsection*{Organization of the paper}

Our paper is organized as follows. In Section \ref{Sprelim}, we collect the Fourier
dictionary and the facts about spherical harmonics that will be used later. Section \ref{Sproofs} opens with the few-line proof of Theorem \ref{TC} announced in
Subsection \ref{Spov}, and then proves Theorems \ref{Tfrac1}, \ref{Tfrac2} and
\ref{Tfracneg}.
Section \ref{Sfraccascade} contains the two building blocks and the proofs of Theorems
\ref{Tfraccascade} and \ref{Tfrac2cascade}, together with Corollary
\ref{CHUPcascade}. Finally, Section \ref{Spoincare} is devoted to the second order
Heisenberg Uncertainty Principle: we establish there the two chains of weighted
Poincar\'e inequalities on the Fourier side, namely Theorem \ref{TimprovedPoincare} and
its coupled form, Theorem \ref{TcoupledPoincare}, we carry out the inverse Fourier
computations, and we prove Theorem \ref{Tcascade} and Corollary \ref{Ccurlfree}.

\section{Preliminaries}\label{Sprelim}

\subsection{Fourier identities and the dictionary}\label{Sdictionary}

Throughout this paper, we use the normalization
\[
\mathcal{F}(f)(\xi)=\widehat{f}(\xi)
=\int_{\mathbb{R}^{N}} f(x)e^{-2\pi i x\cdot\xi}\,dx,
\qquad
\mathcal{F}^{-1}(g)(x)
=\int_{\mathbb{R}^{N}} g(\xi)e^{2\pi i x\cdot\xi}\,d\xi,
\]
so that
$\mathcal{F}(\partial_{x_{k}}f)(\xi)=2\pi i\,\xi_{k}\widehat{f}(\xi)$ and
$\mathcal{F}(x_{h}f)(\xi)=-\frac{1}{2\pi i}\partial_{\xi_{h}}\widehat{f}(\xi)$.

The following lemma is the dictionary that translates the three integrals of
\eqref{CKN}, written for $\widehat{u}$, into classical quantities in the physical
variable.

\begin{lemma}\label{Ldictionary}
For $u \in C_{0}^{\infty}(\mathbb{R}^{N})$ and $\kappa \in \mathbb{R}$, one has
\begin{align}
\int_{\mathbb{R}^{N}}|\xi|^{2\kappa}|\widehat{u}(\xi)|^{2}\,d\xi
&=(2\pi)^{-2\kappa}\int_{\mathbb{R}^{N}}
\left|(-\Delta)^{\kappa/2}u\right|^{2}dx,
\label{dictA}\\
\int_{\mathbb{R}^{N}}|\nabla\widehat{u}(\xi)|^{2}\,d\xi
&=(2\pi)^{2}\int_{\mathbb{R}^{N}}|x|^{2}|u|^{2}\,dx,
\label{dictB}\\
\int_{\mathbb{R}^{N}}|\xi|^{2}|\nabla\widehat{u}(\xi)|^{2}\,d\xi
&=\int_{\mathbb{R}^{N}}|x|^{2}|\nabla u|^{2}\,dx.
\label{dictC}
\end{align}
\end{lemma}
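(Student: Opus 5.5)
The plan is to prove all three identities from Plancherel, using the two rules $\mathcal{F}(\partial_{x_{k}}f)(\xi)=2\pi i\,\xi_{k}\widehat{f}(\xi)$ and $\mathcal{F}(x_{h}f)(\xi)=-\frac{1}{2\pi i}\partial_{\xi_{h}}\widehat{f}(\xi)$. Since $u\in C_{0}^{\infty}(\mathbb{R}^{N})$, the transform $\widehat{u}$ is a Schwartz function, so every product $x_{h}u$, $\partial_{k}u$ and $x_{h}\partial_{k}u$ lies in $L^{2}$ and Plancherel applies to each of them directly.

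\emph{Identity \eqref{dictA}.} This follows from the definition of $(-\Delta)^{\kappa/2}$ on the Fourier side together with Plancherel:
\[
\int_{\mathbb{R}^{N}}|(-\Delta)^{\kappa/2}u|^{2}dx=\int_{\mathbb{R}^{N}}(2\pi|\xi|)^{2\kappa}|\widehat{u}|^{2}d\xi .
\]
When $\kappa<0$, the left-hand side is interpreted through this formula. Both sides are then finite exactly when $|\xi|^{2\kappa}$ is integrable near the origin against $|\widehat u|^2$, and otherwise both are $+\infty$.

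\emph{Identity \eqref{dictB}.} Writing $\partial_{\xi_{h}}\widehat{u}=-2\pi i\,\mathcal{F}(x_{h}u)$, Plancherel gives
\[
\int_{\mathbb{R}^{N}}|\partial_{\xi_{h}}\widehat{u}|^{2}d\xi=(2\pi)^{2}\int_{\mathbb{R}^{N}}x_{h}^{2}|u|^{2}dx .
\]
Summing over $h$ gives \eqref{dictB}.

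\emph{Identity \eqref{dictC}.} This is the one place where some care is needed, because the weight $|\xi|^{2}$ and the derivative $\partial_{\xi_h}$ do not commute. I will expand
\[
|\xi|^{2}|\nabla\widehat{u}|^{2}=\sum_{k,h}|\xi_{k}\,\partial_{\xi_{h}}\widehat{u}|^{2},
\]
and then identify each term $\xi_{k}\partial_{\xi_{h}}\widehat{u}$ as the Fourier transform of an explicit function. The product rule gives
\[
\xi_{k}\partial_{\xi_{h}}\widehat{u}=\partial_{\xi_{h}}(\xi_{k}\widehat{u})-\delta_{kh}\widehat{u},
\qquad
\xi_{k}\widehat{u}=\tfrac{1}{2\pi i}\mathcal{F}(\partial_{k}u).
\]
Combining these with the rule for $\mathcal{F}(x_hf)$ yields
\[
\xi_{k}\partial_{\xi_{h}}\widehat{u}=-\mathcal{F}(x_{h}\partial_{k}u)-\delta_{kh}\widehat{u}=-\mathcal{F}\big(x_{h}\partial_{k}u+\delta_{kh}u\big).
\]
By Plancherel, the left side of \eqref{dictC} therefore equals $\sum_{k,h}\|x_{h}\partial_{k}u+\delta_{kh}u\|_{2}^{2}$. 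Expanding the squares, this is
\[
\int_{\mathbb{R}^{N}}|x|^{2}|\nabla u|^{2}dx+2\,\mathrm{Re}\int_{\mathbb{R}^{N}}\bar u\,x\cdot\nabla u\,dx+N\int_{\mathbb{R}^{N}}|u|^{2}dx .
\]
The cross term is handled by integration by parts:
\[
2\,\mathrm{Re}\int_{\mathbb{R}^{N}}\bar u\,x\cdot\nabla u\,dx=\int_{\mathbb{R}^{N}}x\cdot\nabla|u|^{2}dx=-N\int_{\mathbb{R}^{N}}|u|^{2}dx .
\]
This exactly cancels the $N\|u\|_{2}^{2}$ term and leaves $\int|x|^{2}|\nabla u|^{2}$, with no factor of $2\pi$ because the factors $2\pi i$ and $(2\pi i)^{-1}$ cancel.

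The main obstacle is the bookkeeping in \eqref{dictC}. One has to track the commutator term $\delta_{kh}\widehat{u}$ and check that its contributions, namely the diagonal cross terms and the $N\|u\|^{2}$ term, cancel exactly. This cancellation is what shows that no lower-order term survives and no derivative is lost.
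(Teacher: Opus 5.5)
Your proof is correct and is essentially the paper's argument. You use the same Plancherel identities for \eqref{dictA} and \eqref{dictB}. For \eqref{dictC} you use the same relation $\mathcal{F}(x_h\partial_k u)=-\delta_{kh}\widehat{u}-\xi_k\partial_{\xi_h}\widehat{u}$, followed by expanding the square and one integration by parts that cancels the $N\|u\|_2^2$ term. The only difference is cosmetic: the paper expands on the Fourier side and integrates by parts in $\xi$, whereas you pull back to the physical side and integrate by parts in $x$, where compact support makes the boundary terms vanish trivially.
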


\begin{proof}
The identity \eqref{dictA} is just the definition of $(-\Delta)^{\kappa/2}$ together
with the Plancherel theorem. For \eqref{dictB}, we use
$\mathcal{F}(x_{j}u)(\xi)=-\frac{1}{2\pi i}\partial_{\xi_{j}}\widehat{u}(\xi)$ and the
Plancherel theorem once more:
\[
\int_{\mathbb{R}^{N}}|x|^{2}|u|^{2}\,dx
=\sum_{j=1}^{N}\int_{\mathbb{R}^{N}}|x_{j}u|^{2}\,dx
=\sum_{j=1}^{N}\int_{\mathbb{R}^{N}}
\left|\mathcal{F}(x_{j}u)(\xi)\right|^{2}d\xi
=\frac{1}{(2\pi)^{2}}\int_{\mathbb{R}^{N}}|\nabla\widehat{u}|^{2}\,d\xi.
\]
For \eqref{dictC}, we write $u_{k}=\partial u/\partial x_{k}$ and we use the Plancherel
theorem again:
\[
\int_{\mathbb{R}^{N}}|x|^{2}|\nabla u|^{2}\,dx
=\sum_{k,h=1}^{N}\int_{\mathbb{R}^{N}}|x_{h}u_{k}|^{2}\,dx
=\sum_{k,h=1}^{N}\int_{\mathbb{R}^{N}}
\left|\mathcal{F}(x_{h}u_{k})(\xi)\right|^{2}d\xi.
\]
Since
\[
\mathcal{F}(x_{h}u_{k})(\xi)
=-\frac{1}{2\pi i}\partial_{\xi_{h}}\left[\mathcal{F}(u_{k})\right](\xi)
=-\partial_{\xi_{h}}\left[\xi_{k}\widehat{u}\right](\xi)
=-\delta_{kh}\widehat{u}-\xi_{k}\partial_{\xi_{h}}\widehat{u},
\]
we derive that
\[
\int_{\mathbb{R}^{N}}|x|^{2}|\nabla u|^{2}\,dx
=N\int_{\mathbb{R}^{N}}|\widehat{u}|^{2}\,d\xi
+\int_{\mathbb{R}^{N}}|\xi|^{2}|\nabla\widehat{u}|^{2}\,d\xi
+2\sum_{h=1}^{N}\int_{\mathbb{R}^{N}}\xi_{h}\,
\mathrm{Re}\left[\partial_{\xi_{h}}\widehat{u}\cdot
\overline{\widehat{u}}\right]d\xi.
\]
By an integration by parts, the last sum is equal to
$\sum_{h}\int \xi_{h}\partial_{\xi_{h}}|\widehat{u}|^{2}\,d\xi
=-N\int|\widehat{u}|^{2}\,d\xi$, and therefore \eqref{dictC} follows.
\end{proof}

We note that \eqref{dictA} with $\kappa=2$ and $\kappa=1$ gives the two familiar
identities
\[
\int_{\mathbb{R}^{N}}|\Delta u|^{2}dx
=(2\pi)^{4}\int_{\mathbb{R}^{N}}|\xi|^{4}|\widehat{u}|^{2}d\xi,
\qquad
\int_{\mathbb{R}^{N}}|\nabla u|^{2}dx
=(2\pi)^{2}\int_{\mathbb{R}^{N}}|\xi|^{2}|\widehat{u}|^{2}d\xi.
\]
The identity \eqref{dictC} is the key point of our approach. Indeed, it says that the
weight $|x|^{2}$ acting on $\nabla u$ becomes on the Fourier side again a first order
quantity, namely $|\xi|^{2}|\nabla\widehat{u}|^{2}$, and that no derivative is lost in
this process.

\begin{remark}\label{Ronlytwolines}
The first factor of \eqref{CKN}, written for $\widehat{u}$, is
$\int_{\mathbb{R}^{N}}|\nabla\widehat{u}|^{2}|\xi|^{-2b}\,d\xi$. Writing
$\mathcal{F}(x_{j}u)=-\frac{1}{2\pi i}\partial_{\xi_{j}}\widehat{u}$ as above, one has,
for every $\kappa \in \mathbb{R}$,
\[
\int_{\mathbb{R}^{N}}|\xi|^{2\kappa}|\nabla\widehat{u}(\xi)|^{2}\,d\xi
=\frac{1}{(2\pi)^{2\kappa-2}}\sum_{j=1}^{N}\int_{\mathbb{R}^{N}}
\left|(-\Delta)^{\kappa/2}(x_{j}u)\right|^{2}dx,
\]
and therefore the choices $\kappa=0$ and $\kappa=1$, that is, $b=0$ and $b=-1$, are the
only ones for which no fractional power of the Laplacian is left on the right-hand
side. This is precisely why we restrict ourselves to these two lines. A systematic
study of the general case, which requires an analysis of the commutator
$(-\Delta)^{\kappa/2}(x_{j}u)-x_{j}(-\Delta)^{\kappa/2}u$, will be carried out in a
forthcoming paper. Let us also mention that, for a general $\gamma>0$,
\[
\int_{\mathbb{R}^{N}}|x|^{2\gamma}|u|^{2}\,dx
=(2\pi)^{-2\gamma}\int_{\mathbb{R}^{N}}
\left|(-\Delta_{\xi})^{\gamma/2}\widehat{u}\right|^{2}d\xi,
\]
so that a weight $|x|^{2\gamma}|u|^{2}$ with $\gamma \neq 1$ corresponds on the Fourier
side to a term of fractional order $\gamma$, which does not belong to the first order
family \eqref{CKN}. For $\gamma=1$ we recover \eqref{dictB}, since
$\int_{\mathbb{R}^{N}}|(-\Delta_{\xi})^{1/2}\widehat{u}|^{2}d\xi
=\int_{\mathbb{R}^{N}}|\nabla\widehat{u}|^{2}d\xi$.
\end{remark}

Before we proceed, let us make one remark on the function spaces. The inequality
\eqref{CKN} is stated for $v \in C_{0}^{\infty}(\mathbb{R}^{N}\setminus\{0\})$, while
we shall apply it to $v=\widehat{u}$, which is neither compactly supported nor
vanishing at the origin. This causes no difficulty. Indeed, both sides of \eqref{CKN}
are continuous with respect to the norm
$\left(\int_{\mathbb{R}^{N}}|\nabla v|^{2}|x|^{-2b}dx
+\int_{\mathbb{R}^{N}}|v|^{2}|x|^{-2a}dx\right)^{1/2}$. Moreover, in our two
applications we have $b \in \{0,-1\}$, so that $b \leq \frac{N-2}{2}$ for all the
dimensions considered here. As we will explain below, this is precisely the condition
under which the origin has zero capacity with respect to the Dirichlet form
$\int_{\mathbb{R}^{N}}|\nabla v|^{2}|x|^{-2b}dx$, and therefore we can find cut-off
functions $\eta_{\varepsilon}$, vanishing near the origin and equal to $1$ outside a
small ball, such that
$\int_{\mathbb{R}^{N}}|\nabla\eta_{\varepsilon}|^{2}|x|^{-2b}dx \to 0$. Hence
$C_{0}^{\infty}(\mathbb{R}^{N}\setminus\{0\})$ is dense in the space of the functions
for which the above norm is finite, and consequently \eqref{CKN}, as well as Theorem
\ref{TB}, remains valid for every such $v$. For
$u \in C_{0}^{\infty}(\mathbb{R}^{N})$ and for the parameters used below, this is
always the case: the function $\widehat{u}$ is smooth and decays, together with all its
derivatives, faster than any power of $|\xi|$, and near the origin the weights
$|\xi|^{2s}$, $|\xi|^{2}$, $|\xi|^{s}$ and $|\xi|^{s-1}$ are locally integrable. Indeed, in Theorem
\ref{Tfrac1} we have $2s>-2\geq-N$ and $s-1>-2\geq-N$ since $s>-1$ and $N \geq 2$,
while in Theorem \ref{Tfracneg} the assumption $s>-\frac{N}{2}$ gives $2s>-N$, together
with $s>-N$ and $s-1>-N$.

It is worth adding one comment on the condition $b \leq \frac{N-2}{2}$, which appears in
Theorem \ref{TA} and Theorem \ref{TB} and which separates the regions
$\mathcal{A}_{1},\mathcal{B}_{1}$ from $\mathcal{A}_{2},\mathcal{B}_{2}$. This
condition is exactly the condition under which the origin has zero capacity with respect
to the Dirichlet form $\int_{\mathbb{R}^{N}}|\nabla v|^{2}|x|^{-2b}dx$. Indeed, for the
radial cut-off functions the corresponding capacity is controlled by
\[
\left(\int_{\varepsilon}^{1}r^{2b-N+1}\,dr\right)^{-1},
\]
and the integral diverges as $\varepsilon \to 0$ if and only if $2b-N+1 \leq -1$, that
is, if and only if $b \leq \frac{N-2}{2}$. This explains why the above density argument works
precisely in the regions that we use. In the complementary regions
$\mathcal{A}_{2},\mathcal{B}_{2}$, where $b \geq \frac{N-2}{2}$, the origin is seen by
the Dirichlet form, and the functions in the completion of
$C_{0}^{\infty}(\mathbb{R}^{N}\setminus\{0\})$ vanish there; this is consistent with the
fact that in those regions all the optimizers of Theorem \ref{TA} vanish at the origin
as well. Since the Fourier
dictionary forces $b \in \{0,-1\}$, we have $b \leq \frac{N-2}{2}$ in all the cases
treated in this paper, with the only exception of $b=0$ and $N=1$; we note in passing
that for $b=0$ and $N=2$ the two regions meet, and that the two branches of the maximum
in Theorem \ref{TA} then give the same constant $\frac{1+s}{2}$, in agreement with
$2(b+1)-N=0$.

\subsection{Spherical harmonics}\label{Ssph}

Let $\{\phi_{k,i}\}_{i=1}^{N_{k}}$, $k=0,1,2,\ldots$, be an orthonormal basis of
$L^{2}(\mathbb{S}^{N-1})$ consisting of spherical harmonics, with
\[
-\Delta_{\mathbb{S}^{N-1}}\phi_{k,i}=c_{k}\phi_{k,i},\qquad c_{k}=k(N+k-2),\qquad
N_{k}=\binom{N+k-1}{k}-\binom{N+k-3}{k-2}.
\]
In particular, $N_{0}=1$ and $\phi_{0,1}\equiv|\mathbb{S}^{N-1}|^{-1/2}$, while
$N_{1}=N$ and the functions $\phi_{1,i}$ are multiples of the coordinate functions
$\theta_{i}$.
For $v \in C_{0}^{\infty}(\mathbb{R}^{N})$, we decompose
\begin{equation}\label{sphdecomp}
v(x)=v(r\theta)=\sum_{k=0}^{\infty}\sum_{i=1}^{N_{k}}v_{k,i}(r)\phi_{k,i}(\theta),
\qquad r=|x|,\quad \theta=\frac{x}{|x|},
\end{equation}
so that, for any weight $w(r)>0$,
\begin{equation}\label{sphgrad}
\int_{\mathbb{R}^{N}}|\nabla v|^{2}w(|x|)\,dx
=\sum_{k=0}^{\infty}\sum_{i=1}^{N_{k}}\int_{0}^{\infty}
\left(|v_{k,i}'|^{2}+c_{k}\frac{|v_{k,i}|^{2}}{r^{2}}\right)w(r)r^{N-1}\,dr,
\end{equation}
\[
\int_{\mathbb{R}^{N}}|v|^{2}w(|x|)\,dx
=\sum_{k=0}^{\infty}\sum_{i=1}^{N_{k}}\int_{0}^{\infty}
|v_{k,i}|^{2}w(r)r^{N-1}\,dr.
\]

\begin{lemma}\label{Lordering}
Let $N \geq 2$ and let $D_{k}$ be as in \eqref{Dk}. Then \eqref{ordering} holds, that
is, $0<D_{1}<D_{2}<4<D_{3}$ and $D_{3}-4<D_{1}$. Moreover,
\begin{equation}\label{ordering2}
D_{1}<D_{2}-D_{1}
\qquad\text{and}\qquad
D_{4}-D_{1}>4.
\end{equation}
\end{lemma}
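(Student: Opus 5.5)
The proof is a sequence of elementary comparisons of square roots. I will use the explicit values
\[
c_{1}=N-1,\quad c_{2}=2N,\quad c_{3}=3N+3,\quad c_{4}=4N+8,
\]
so that
\[
D_{1}=\sqrt{N^{2}+4N-4}-N,\quad D_{2}=\sqrt{N^{2}+8N}-N,\quad D_{3}=\sqrt{N^{2}+12N+12}-N,\quad D_{4}=\sqrt{N^{2}+16N+32}-N .
\]
Each claimed inequality is rewritten so that both sides are nonnegative, and then squared, possibly twice, until it reduces to a polynomial inequality in $N$ that is visibly true for $N\geq 2$.

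\textbf{The chain $0<D_{1}<D_{2}<4<D_{3}$.} Since $c_{k}$ is strictly increasing in $k$ and $c_{1}=N-1>0$, the sequence $D_{k}$ is strictly increasing with $D_{1}>0$.
\begin{itemize}
\item $D_{2}<4$ is equivalent to $N^{2}+8N<(N+4)^{2}=N^{2}+8N+16$, which always holds.
\item $D_{3}>4$ is equivalent to $N^{2}+12N+12>N^{2}+8N+16$, that is, to $N>1$.
\end{itemize}

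\textbf{The inequality $D_{3}-4<D_{1}$.} This reads $\sqrt{N^{2}+12N+12}<\sqrt{N^{2}+4N-4}+4$. Squaring gives
\[
8N<8\sqrt{N^{2}+4N-4},
\]
and squaring again gives $0<4N-4$, which holds for $N\geq 2$.

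\textbf{The inequality $D_{1}<D_{2}-D_{1}$.} This is $2\sqrt{N^{2}+4N-4}<N+\sqrt{N^{2}+8N}$. Squaring and simplifying gives
\[
N^{2}+4N-8<N\sqrt{N^{2}+8N}.
\]
The left-hand side is positive for $N\geq 2$, so we may square once more. Since $(N^{2}+4N-8)^{2}=N^{4}+8N^{3}-64N+64$, the inequality becomes $-64N+64<0$, which holds for $N\geq 2$.

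\textbf{The inequality $D_{4}-D_{1}>4$.} This is $\sqrt{N^{2}+16N+32}>\sqrt{N^{2}+4N-4}+4$. Squaring reduces it to
\[
3N+5>2\sqrt{N^{2}+4N-4},
\]
and squaring again gives $5N^{2}+14N+41>0$, which is true.

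\textbf{Main obstacle.} The only delicate step is $D_{1}<D_{2}-D_{1}$, where two squarings are needed. Before the second squaring one must check that $N^{2}+4N-8>0$; this holds for $N\geq 2$ since $N^{2}+4N-8\geq 4$. It is also the one place where the bound $N\geq 2$ is genuinely used, through $N>1$; the same is true of $D_{3}>4$ and $D_{3}-4<D_{1}$.
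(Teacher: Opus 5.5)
Your proof is correct and takes essentially the same approach as the paper. It uses the same explicit values of $D_{1},\dots,D_{4}$, the same monotonicity argument for $0<D_{1}<D_{2}<D_{3}$, and the same one- or two-step squarings, which reduce each claim to $N>1$, $0<16$, or $5N^{2}+14N+41>0$; your check that $N^{2}+4N-8>0$ before the second squaring matches the paper's remark that both sides are positive for $N\geq 2$.
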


\begin{proof}
Since $c_{1}<c_{2}<c_{3}$, the map $k \mapsto D_{k}$ is strictly increasing, and
therefore $0<D_{1}<D_{2}<D_{3}$. Next, $D_{2}<4$ is equivalent to
$N^{2}+8N<(N+4)^{2}$, that is, to $0<16$, which is always true. Also, $D_{3}>4$ is
equivalent to $N^{2}+12N+12>(N+4)^{2}=N^{2}+8N+16$, that is, to $4N>4$, which holds
since $N \geq 2$. Finally, $D_{3}-4<D_{1}$ means
$\sqrt{N^{2}+12N+12}<4+\sqrt{N^{2}+4N-4}$. Squaring both sides, this is equivalent to
$8N<8\sqrt{N^{2}+4N-4}$, that is, to $N^{2}<N^{2}+4N-4$, and again this is exactly
$N>1$.

It remains to prove \eqref{ordering2}. The first inequality, $D_{2}>2D_{1}$, means
$\sqrt{N^{2}+8N}+N>2\sqrt{N^{2}+4N-4}$. Squaring both sides, it is equivalent to
$N\sqrt{N^{2}+8N}>N^{2}+4N-8$, and squaring once more, which is legitimate since both
sides are positive for $N \geq 2$, it becomes
$N^{4}+8N^{3}>N^{4}+8N^{3}-64N+64$, that is, $N>1$. For the second inequality, since
$c_{4}=4(N+2)$, we have $D_{4}=\sqrt{N^{2}+16N+32}-N$, and $D_{4}-D_{1}>4$ means
$\sqrt{N^{2}+16N+32}>4+\sqrt{N^{2}+4N-4}$. Squaring both sides, this is equivalent to
$12N+20>8\sqrt{N^{2}+4N-4}$, that is, to $(3N+5)^{2}>4\left(N^{2}+4N-4\right)$, and
hence to $5N^{2}+14N+41>0$, which is always true.
\end{proof}

Finally, we state the one dimensional Poincar\'e inequality with Gaussian type measure
from \cite{DLLN26}, in the form that we will use. In \cite{DLLN26} it is stated for the
measure $e^{-\rho^{2}/2}\rho^{\alpha-1}\,d\rho$, and the change of variable
$\rho=\sqrt{2}\,r$
gives the following version.

We will use the following one dimensional Poincar\'e inequality, which is the case
$\sigma=2$ of Lemma \ref{Llaguerre} below.

\begin{theorema}\label{TD}
Let $\alpha>0$. For every
$\psi \in W^{1,2}\left((0,\infty),e^{-r^{2}}r^{\alpha-1}\,dr\right)$, one has
\[
\int_{0}^{\infty}|\psi'|^{2}e^{-r^{2}}r^{\alpha-1}\,dr
\geq 4\inf_{c \in \mathbb{R}}\int_{0}^{\infty}
|\psi-c|^{2}e^{-r^{2}}r^{\alpha-1}\,dr.
\]
Here the constant $4$ is sharp and is attained exactly by the functions
$\psi(r)=a_{0}+a_{1}r^{2}$. Moreover, one also has
\[
\int_{0}^{\infty}|\psi'|^{2}e^{-r^{2}}r^{\alpha-1}\,dr
-4\inf_{c \in \mathbb{R}}\int_{0}^{\infty}|\psi-c|^{2}e^{-r^{2}}r^{\alpha-1}\,dr
\geq 4\inf_{c,d \in \mathbb{R}}\int_{0}^{\infty}
|\psi-c-dr^{2}|^{2}e^{-r^{2}}r^{\alpha-1}\,dr.
\]
\end{theorema}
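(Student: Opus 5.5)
The plan is to reduce everything to the spectral decomposition of the Laguerre operator. First I will substitute $t=r^{2}$ and write $\psi(r)=\varphi(r^{2})$. Since $dr=dt/(2\sqrt t)$, $r^{\alpha-1}=t^{(\alpha-1)/2}$ and $\psi'(r)=2\sqrt t\,\varphi'(t)$, the two sides become
\[
\int_{0}^{\infty}|\psi-c|^{2}e^{-r^{2}}r^{\alpha-1}\,dr=\frac12\int_{0}^{\infty}|\varphi-c|^{2}\,t^{\gamma}e^{-t}\,dt,
\qquad
\int_{0}^{\infty}|\psi'|^{2}e^{-r^{2}}r^{\alpha-1}\,dr=\frac12\int_{0}^{\infty}4t\,|\varphi'|^{2}\,t^{\gamma}e^{-t}\,dt,
\]
where $\gamma:=\frac{\alpha}{2}-1>-1$. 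The hypothesis $\alpha>0$ is exactly what makes $d\mu_{\gamma}:=t^{\gamma}e^{-t}dt$ a finite measure. The claim is then the statement that the Dirichlet form $\mathcal{E}(\varphi)=\int t|\varphi'|^{2}\,d\mu_{\gamma}$ has spectral gap $1$, with first two eigenspaces spanned by $1$ and $t$. In the original variable these become $1$ and $r^{2}$, which explains the optimizers $a_{0}+a_{1}r^{2}$.

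Next I will use the generalized Laguerre polynomials $L_{n}^{(\gamma)}$. They are a complete orthogonal system in $L^{2}(\mu_{\gamma})$; completeness follows from density of polynomials, since $\mu_{\gamma}$ has exponential moments. They are the eigenfunctions of $\mathcal{L}\varphi=-t\varphi''-(\gamma+1-t)\varphi'$ with eigenvalues $n$. Since $\mathcal{L}$ is the operator associated with $\mathcal{E}$, that is $\int(\mathcal{L}\varphi)\bar\chi\,d\mu_{\gamma}=\int t\varphi'\bar\chi'\,d\mu_{\gamma}$, I obtain $\mathcal{E}(L_{n},L_{m})=n\|L_{n}\|^{2}\delta_{nm}$. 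Expanding $\varphi=\sum a_{n}L_{n}^{(\gamma)}$ then gives
\[
\mathcal{E}(\varphi)=\sum_{n\ge1}n|a_{n}|^{2}\|L_{n}\|^{2}.
\]
The two infima in the statement are the squared distances to $\mathrm{span}\{L_{0}\}$ and to $\mathrm{span}\{L_{0},L_{1}\}=\mathrm{span}\{1,t\}$. They therefore equal, up to the common factor $\frac12$, $\sum_{n\ge1}|a_{n}|^{2}\|L_{n}\|^{2}$ and $\sum_{n\ge2}|a_{n}|^{2}\|L_{n}\|^{2}$ respectively.

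Multiplying by $4$, the first inequality reads $\sum_{n\ge1}n|a_n|^2\|L_n\|^2\ge\sum_{n\ge1}|a_n|^2\|L_n\|^2$. Equality forces $a_{n}=0$ for all $n\ge2$, which gives both sharpness and the characterization of equality. For the second inequality, the left-hand difference equals $4\sum_{n\ge2}(n-1)|a_{n}|^{2}\|L_n\|^{2}\ge4\sum_{n\ge2}|a_{n}|^{2}\|L_n\|^{2}$, which is exactly the right-hand side. Complex-valued $\psi$ cause no extra difficulty: the infima over complex $c,d$ are the orthogonal projections, and for real $\psi$ the optimal $c,d$ are automatically real.

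The main obstacle is functional-analytic. I must justify that the diagonal formula for $\mathcal{E}$ holds on all of $W^{1,2}((0,\infty),e^{-r^{2}}r^{\alpha-1}dr)$, and not only for polynomials. This needs the integration by parts with no boundary term at $t=0$, which is delicate when $-1<\gamma\le0$, and at $t=\infty$. I would handle it by a density argument. First, the boundary term $t^{\gamma+1}e^{-t}\varphi'\bar\chi$ vanishes for polynomials, so the formula holds on polynomials. Second, I show that polynomials are dense in the form domain for the norm $\mathcal{E}(\varphi)+\|\varphi\|^{2}$. To do this I apply the $L^{2}(\mu_{\gamma+1})$-completeness of $\{L_{n}^{(\gamma+1)}\}$ to $\varphi'$, using $\frac{d}{dt}L_{n}^{(\gamma)}=-L_{n-1}^{(\gamma+1)}$, and then integrate back. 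The resulting estimate is closed by a weighted Hardy-type bound controlling $\varphi$ by $\varphi'$. Lower semicontinuity of both sides then extends the inequalities to the whole space.
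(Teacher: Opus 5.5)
Your proof is correct and is essentially the paper's argument: Theorem D is proved there as the case $\sigma=2$ of Lemma \ref{Llaguerre}. That proof uses the substitution $t=r^{2}$ and the Laguerre eigenexpansion (your $L_n^{(\alpha/2-1)}$ are the paper's $L_n^{(\gamma-1)}$ with $\gamma=\alpha/2$), and compares $\sum n|a_n|^2h_n$ with the tails $\sum_{n\ge1}$ and $\sum_{n\ge2}$. Your density argument extending the diagonal formula to the full weighted $W^{1,2}$ space is sound. It covers a point the paper leaves to the cited references.
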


%---------------------------------------------------------------------------------------------
%---------------------------------------------------------------------------------------------
\section[Proofs of the main results]{A short proof of Theorem \ref{TC}, and the proofs of Theorems \ref{Tfrac1}, \ref{Tfrac2} and \ref{Tfracneg}}
\label{Sproofs}

We begin with the proof of Theorem \ref{TC} announced in Subsection \ref{Spov}, which
displays the mechanism of the paper in its simplest form.

\begin{proof}[Proof of Theorem \ref{TC}]
By \eqref{dict1} with $\kappa=2$ and $\kappa=1$, and by the second identity in
\eqref{dict2}, we have
\[
\begin{aligned}
\delta_{\mathrm{S}}(u)
&=(2\pi)^{2}\bigg[
\left(\int_{\mathbb{R}^{N}}|\xi|^{4}|\widehat{u}|^{2}d\xi\right)^{\frac12}
\left(\int_{\mathbb{R}^{N}}|\xi|^{2}|\nabla\widehat{u}|^{2}d\xi\right)^{\frac12}\\
&\qquad-\frac{N+2}{2}\int_{\mathbb{R}^{N}}|\xi|^{2}|\widehat{u}|^{2}d\xi\bigg]
=(2\pi)^{2}\,\delta_{2}(\widehat{u}),
\end{aligned}
\]
which is \eqref{key}, where $\delta_{2}$ is the CKN deficit \eqref{L2CKNdeficit} for
$(a,b)=(-2,-1)$, since
$|\xi|^{-2a}=|\xi|^{4}$, $|\xi|^{-2b}=|\xi|^{2}$, $|\xi|^{-(a+b+1)}=|\xi|^{2}$ and
$C(N,-2,-1)=\frac{N+2}{2}$ by Theorem \ref{TA}. The nonnegativity of $\delta_{2}$ is the
sharp inequality \eqref{2HUP}. Moreover $(-2,-1)\in\mathcal{A}_{1}$ and
$C_{\mathrm{PI}}(-2,-1,N)=\min\{4,\sqrt{N^{2}+4N-4}-N\}=D_{1}$ by Lemma \ref{Lordering},
so that the first case of Theorem \ref{TB} gives
\[
\delta_{2}(\widehat{u})\geq\frac{D_{1}}{2}
\inf_{c \in \mathbb{C},\,\lambda>0}\int_{\mathbb{R}^{N}}
\left|\widehat{u}-c\,e^{-\frac{\lambda|\xi|^{2}}{2}}\right|^{2}|\xi|^{2}\,d\xi .
\]
Finally, $\mathcal{F}^{-1}\left(e^{-\lambda|\xi|^{2}/2}\right)
=\left(\frac{2\pi}{\lambda}\right)^{N/2}e^{-\frac{2\pi^{2}}{\lambda}|x|^{2}}$ is a
Gaussian, and \eqref{dict1} with $\kappa=1$ turns the weighted $L^{2}$ distance on the
right-hand side into
$\frac{1}{(2\pi)^{2}}\int_{\mathbb{R}^{N}}|\nabla(u-c\,G_{2,\lambda})|^{2}dx$. Since
$u$ is real-valued and $G_{2,\lambda}$ is real, writing $c=c_{1}+ic_{2}$ gives
$\int|\nabla(u-cG_{2,\lambda})|^{2}
=\int|\nabla(u-c_{1}G_{2,\lambda})|^{2}+c_{2}^{2}\int|\nabla G_{2,\lambda}|^{2}$, so
that the infimum over $c \in \mathbb{C}$ is attained at a real $c$, and the competitors
$c\,G_{2,\lambda}$ are then exactly the elements of $E_{\mathrm{SHUP}}$.
Multiplying by $(2\pi)^{2}$ gives \eqref{DoLLstab}, and the sharpness of the constant
follows from the sharpness part of Theorem \ref{TB}.
\end{proof}

The proofs of the fractional theorems follow the same three steps: translate, apply
Theorem \ref{TA} and Theorem \ref{TB} to $\widehat{u}$, and invert the Fourier
transform. We are now ready to prove Theorems \ref{Tfrac1}, \ref{Tfrac2} and \ref{Tfracneg}.

\begin{proof}[Proof of Theorem \ref{Tfrac1}]
We apply \eqref{CKN} to $v=\widehat{u}$ with the parameters
\[
a=-s,\qquad b=0.
\]
Then $b+1-a=1+s>0$ since $s>-1$, and $b=0\leq\frac{N-2}{2}$ since $N \geq 2$, and hence
$(-s,0)\in\mathcal{A}_{1}$. Therefore Theorem \ref{TA} gives us
\[
C(N,-s,0)=\frac{|N-(a+b+1)|}{2}=\frac{|N-(1-s)|}{2}=\frac{N+s-1}{2},
\]
where we used $N-1+s>0$, which holds since $s>-1$ and $N \geq 2$.
For these parameters, $|\xi|^{-2a}=|\xi|^{2s}$, $|\xi|^{-2b}=1$ and
$|\xi|^{-(a+b+1)}=|\xi|^{s-1}$, so that, by \eqref{dictA},
\[
\begin{aligned}
\int_{\mathbb{R}^{N}}\frac{|\widehat{u}|^{2}}{|\xi|^{2a}}\,d\xi
&=(2\pi)^{-2s}\int_{\mathbb{R}^{N}}\left|(-\Delta)^{s/2}u\right|^{2}dx,\\
\int_{\mathbb{R}^{N}}\frac{|\widehat{u}|^{2}}{|\xi|^{a+b+1}}\,d\xi
&=(2\pi)^{1-s}\int_{\mathbb{R}^{N}}
\left|(-\Delta)^{\frac{s-1}{4}}u\right|^{2}dx,
\end{aligned}
\]
while $\int_{\mathbb{R}^{N}}|\nabla\widehat{u}|^{2}\,d\xi
=(2\pi)^{2}\int_{\mathbb{R}^{N}}|x|^{2}|u|^{2}\,dx$ by \eqref{dictB}. Substituting
these three identities into \eqref{CKN}, we obtain
\[
\begin{aligned}
&(2\pi)^{1-s}\left(\int_{\mathbb{R}^{N}}
\left|(-\Delta)^{s/2}u\right|^{2}dx\right)^{\frac{1}{2}}
\left(\int_{\mathbb{R}^{N}}|x|^{2}|u|^{2}\,dx\right)^{\frac{1}{2}}\\
&\qquad\geq \frac{N+s-1}{2}(2\pi)^{1-s}\int_{\mathbb{R}^{N}}
\left|(-\Delta)^{\frac{s-1}{4}}u\right|^{2}dx,
\end{aligned}
\]
which is exactly \eqref{frac1}, since the powers of $2\pi$ cancel each other. Moreover,
by Theorem \ref{TA}, the equality holds if and only if
$\widehat{u}(\xi)=D\exp\left(t|\xi|^{1+s}/(1+s)\right)$ with $t<0$, that is, if and
only if $u$ is a multiple of some profile $G_{1+s,\lambda}$ defined in
\eqref{profile}.

For the stability statement, we observe the elementary but useful identity
\eqref{magic}, which gives
\[
\begin{aligned}
\sqrt{(N-2-2b)^{2}+4(N-1)}-(N-2-2b)\Big|_{b=0}
&=\sqrt{(N-2)^{2}+4(N-1)}-(N-2)\\
&=N-(N-2)=2.
\end{aligned}
\]
Hence, for every $s>-1$ and every $N \geq 2$,
\[
C_{\mathrm{PI}}(-s,0,N)=\min\left\{2(1+s),\,2\right\}=2\min\left\{1+s,\,1\right\},
\]
which is equal to $2$ when $s \geq 0$, and to $2(1+s)$ when $-1<s<0$. Therefore the
first case of Theorem \ref{TB} with $(a,b)=(-s,0)$ gives us
\[
\delta_{2}(\widehat{u}) \geq
\min\left\{1+s,\,1\right\}
\inf_{c \in \mathbb{C},\, \lambda>0}\int_{\mathbb{R}^{N}}
\left|\widehat{u}-c\,e^{-\frac{\lambda|\xi|^{1+s}}{1+s}}\right|^{2}
|\xi|^{s-1}\,d\xi.
\]
It remains to undo the Fourier transform. By \eqref{dictA} with
$2\kappa=s-1$, applied to the function
$g:=\mathcal{F}^{-1}\left(\widehat{u}-c\,e^{-\lambda|\xi|^{1+s}/(1+s)}\right)$, and by
$\mathcal{F}^{-1}\left(e^{-\lambda|\xi|^{1+s}/(1+s)}\right)=G_{1+s,\lambda}$, we get
\[
\int_{\mathbb{R}^{N}}
\left|\widehat{u}-c\,e^{-\frac{\lambda|\xi|^{1+s}}{1+s}}\right|^{2}|\xi|^{s-1}\,d\xi
=(2\pi)^{1-s}\int_{\mathbb{R}^{N}}\left|(-\Delta)^{\frac{s-1}{4}}
\left(u-c\,G_{1+s,\lambda}\right)\right|^{2}dx,
\]
and the powers of $2\pi$ cancel once more. This gives \eqref{frac1stab}, and the
sharpness of the constant $\min\{1+s,1\}$ follows from the sharpness part of Theorem
\ref{TB}.
\end{proof}

\begin{proof}[Proof of Theorem \ref{Tfrac2}]
This time we apply \eqref{CKN} to $v=\widehat{u}$ with the parameters
\[
a=-s<0,\qquad b=-1.
\]
Then $b+1-a=s>0$ and $b=-1\leq\frac{N-2}{2}$, and hence
$(-s,-1)\in\mathcal{A}_{1}$, so that Theorem \ref{TA} gives us
\[
C(N,-s,-1)=\frac{|N-(a+b+1)|}{2}=\frac{N+s}{2}.
\]
For these parameters, $|\xi|^{-2a}=|\xi|^{2s}$, $|\xi|^{-2b}=|\xi|^{2}$ and
$|\xi|^{-(a+b+1)}=|\xi|^{s}$, so that, by \eqref{dictA},
\[
\begin{aligned}
\int_{\mathbb{R}^{N}}\frac{|\widehat{u}|^{2}}{|\xi|^{2a}}\,d\xi
&=(2\pi)^{-2s}\int_{\mathbb{R}^{N}}\left|(-\Delta)^{s/2}u\right|^{2}dx,\\
\int_{\mathbb{R}^{N}}\frac{|\widehat{u}|^{2}}{|\xi|^{a+b+1}}\,d\xi
&=(2\pi)^{-s}\int_{\mathbb{R}^{N}}\left|(-\Delta)^{s/4}u\right|^{2}dx,
\end{aligned}
\]
while $\int_{\mathbb{R}^{N}}|\nabla\widehat{u}|^{2}|\xi|^{2}\,d\xi
=\int_{\mathbb{R}^{N}}|x|^{2}|\nabla u|^{2}\,dx$ by \eqref{dictC}. Substituting these
three identities into \eqref{CKN} and cancelling the factor $(2\pi)^{-s}$, we obtain
\eqref{frac2}. As above, Theorem \ref{TA} shows that the equality holds if and only if
$u$ is a multiple of some profile $G_{s,\lambda}$.

For the stability statement, Theorem \ref{TB} with $(a,b)=(-s,-1)$ gives us
\[
\delta_{2}(\widehat{u}) \geq \frac{C_{\mathrm{PI}}(-s,-1,N)}{2}
\inf_{c \in \mathbb{C},\, \lambda>0}\int_{\mathbb{R}^{N}}
\left|\widehat{u}-c\,e^{-\frac{\lambda|\xi|^{s}}{s}}\right|^{2}|\xi|^{s}\,d\xi,
\]
where, since $N-2-2b=N$ when $b=-1$,
\[
\begin{aligned}
C_{\mathrm{PI}}(-s,-1,N)
&=\min\left\{2(1+b-a),\, \sqrt{(N-2-2b)^{2}+4(N-1)}-(N-2-2b)\right\}\\
&=\min\left\{2s,\, \sqrt{N^{2}+4N-4}-N\right\}.
\end{aligned}
\]
Undoing the Fourier transform exactly as in the proof of Theorem \ref{Tfrac1}, but now
with $2\kappa=s$, we obtain \eqref{frac2stab}.
\end{proof}

\begin{proof}[Proof of Theorem \ref{Tfracneg}]
The proof follows the same lines as the proofs of Theorem \ref{Tfrac1} and Theorem
\ref{Tfrac2}, and we only indicate the changes.

For the first part, we apply \eqref{CKN} to $v=\widehat{u}$ with $a=-s>0$ and $b=-1$.
Since $b+1-a=s<0$ and $b=-1\leq\frac{N-2}{2}$, we now have
$(-s,-1)\in\mathcal{B}_{1}$, and hence the second case of Theorem \ref{TA} gives us the
sharp constant
\[
C(N,-s,-1)=\frac{|N-(3b-a+3)|}{2}=\frac{|N-s|}{2}=\frac{N-s}{2},
\]
together with the optimizers
\[
\widehat{u}(\xi)=D\,|\xi|^{2(b+1)-N}
\exp\left(\frac{t|\xi|^{s}}{s}\right)
=D\,|\xi|^{-N}e^{-\frac{t|\xi|^{-\sigma}}{\sigma}},
\qquad t>0,\ \sigma:=-s>0,
\]
that is, by \eqref{profileB}, exactly the profiles $u=D\,G_{-s,t}^{-N}$. The dictionary
identities \eqref{dictA} and \eqref{dictC} are used exactly as in the proof of Theorem
\ref{Tfrac2}, and the powers of $2\pi$ cancel in the same way, which gives
\eqref{frac2neg}. We also note that all the integrals are finite for these profiles:
near the origin because the factor $e^{-t|\xi|^{-\sigma}/\sigma}$ decays faster than
any power of $|\xi|$, and at infinity because $|\widehat{u}(\xi)|^{2}\approx|\xi|^{-2N}$
and $s<0$.

For the stability statement, we use the second case of Theorem \ref{TB} with
$(a,b)=(-s,-1)$. Since $-a+2b+2=s$ and $N-2-2b=N$, we have
\[
\begin{aligned}
C_{\mathrm{PI}}(s,-1,N)
&=\min\left\{2(1-1-s),\,\sqrt{N^{2}+4(N-1)}-N\right\}\\
&=\min\left\{-2s,\,\sqrt{N^{2}+4N-4}-N\right\},
\end{aligned}
\]
and the competitors are
\[
c\,|\xi|^{2b+2-N}e^{\frac{\lambda|\xi|^{b+1-a}}{b+1-a}}
=c\,|\xi|^{-N}e^{\frac{\lambda|\xi|^{s}}{s}},
\]
that is, the Fourier transforms of
$c\,G_{-s,\lambda}^{-N}$. Undoing the Fourier transform with $2\kappa=s$ as before, we
obtain \eqref{frac2negstab}.

For the second part, we apply \eqref{CKN} to $v=\widehat{u}$ with $a=-s$ and $b=0$,
where now $-\frac{N}{2}<s<-1$. Since $b+1-a=1+s<0$ and $b=0\leq\frac{N-2}{2}$, we have
$(-s,0)\in\mathcal{B}_{1}$, and the second case of Theorem \ref{TA} gives us the sharp
constant
\[
C(N,-s,0)=\frac{|N-(3b-a+3)|}{2}=\frac{|N-(s+3)|}{2}=\frac{N-3-s}{2},
\]
which is positive since $s<-1$ and $N \geq 3$, together with the optimizers
\[
\widehat{u}(\xi)=D\,|\xi|^{2-N}\exp\left(\frac{t|\xi|^{1+s}}{1+s}\right)
=D\,|\xi|^{2-N}e^{-\frac{t|\xi|^{-\sigma}}{\sigma}},
\qquad t>0,\ \sigma:=-(1+s)>0,
\]
that is, the profiles $u=D\,G_{-(1+s),t}^{2-N}$. At infinity,
$|\nabla\widehat{u}(\xi)|\approx(N-2)|\xi|^{1-N}$, and hence
$\int_{\mathbb{R}^{N}}|\nabla\widehat{u}|^{2}d\xi<\infty$ precisely because $N \geq 3$.
The dictionary identities are used exactly as in the proof of Theorem \ref{Tfrac1},
which gives \eqref{frac1neg}. Finally, the second case of Theorem \ref{TB} with
$(a,b)=(-s,0)$, where now $-a+2b+2=s+2$ and, by \eqref{magic},
\[
C_{\mathrm{PI}}(s+2,0,N)=\min\left\{2\left(1-(s+2)\right),\,2\right\}
=2\min\left\{|1+s|,\,1\right\},
\]
gives \eqref{frac1negstab} after undoing the Fourier transform with $2\kappa=s-1$.
\end{proof}

Next, we compute the exact remainder term of the CKN deficit. The following identity is
the analogue, for a general Gaussian type measure, of the HUP identity established in
\cite{CFLL24}, and it is the starting point of all the stability estimates of this
paper.

\begin{lemma}\label{LCKNidentity}
Let $(a,b)\in\mathcal{A}_{1}$, write $\sigma:=1+b-a>0$, and let
$v \in C_{0}^{\infty}(\mathbb{R}^{N}\setminus\{0\})$ with $v \not\equiv 0$. Set
\[
A:=\int_{\mathbb{R}^{N}}\frac{|\nabla v|^{2}}{|x|^{2b}}dx,
\qquad
B:=\int_{\mathbb{R}^{N}}\frac{|v|^{2}}{|x|^{2a}}dx,
\qquad
\mu:=\sqrt{A/B}.
\]
Then
\begin{equation}\label{CKNidentity}
\delta_{2}(v)=\frac{1}{2\mu}\int_{\mathbb{R}^{N}}
\left|\nabla\left(v\,e^{\frac{\mu|x|^{\sigma}}{\sigma}}\right)\right|^{2}
e^{-\frac{2\mu|x|^{\sigma}}{\sigma}}\frac{dx}{|x|^{2b}}.
\end{equation}
\end{lemma}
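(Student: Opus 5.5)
We expand the square on the right-hand side of \eqref{CKNidentity}. Write $\varphi(x):=\frac{\mu|x|^{\sigma}}{\sigma}$, so that $\nabla\varphi=\mu|x|^{\sigma-2}x$ and $|\nabla\varphi|^{2}=\mu^{2}|x|^{2\sigma-2}$. Then
\[
\nabla\left(v e^{\varphi}\right)e^{-\varphi}=\nabla v+v\nabla\varphi,
\]
and hence the integrand equals
\[
\left(|\nabla v|^{2}+2\mu|x|^{\sigma-2}\,\mathrm{Re}\left(\overline{v}\,x\cdot\nabla v\right)+\mu^{2}|x|^{2\sigma-2}|v|^{2}\right)|x|^{-2b}.
\]
Since $2\sigma-2-2b=-2a$, the three pieces integrate respectively to $A$, to a cross term, and to $\mu^{2}B$. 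By the choice $\mu=\sqrt{A/B}$ we have $A+\mu^{2}B=2\mu\sqrt{AB}$. After dividing by $2\mu$, this produces exactly the product term $\sqrt{A}\sqrt{B}$ of $\delta_{2}(v)$.

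For the cross term we write $2\mathrm{Re}(\overline{v}\,x\cdot\nabla v)=x\cdot\nabla|v|^{2}$ and integrate by parts. This is legitimate because $v$ is compactly supported away from the origin, so there are no boundary terms. The weight is $|x|^{\sigma-2-2b}=|x|^{-(a+b+1)}$. Using $\mathrm{div}\left(x|x|^{-(a+b+1)}\right)=(N-(a+b+1))|x|^{-(a+b+1)}$, the cross term equals
\[
-\mu\,(N-(a+b+1))\int_{\mathbb{R}^{N}}\frac{|v|^{2}}{|x|^{a+b+1}}\,dx .
\]
Dividing by $2\mu$ gives $-\frac{N-(a+b+1)}{2}\int|v|^{2}|x|^{-(a+b+1)}dx$.

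It remains to check that $\frac{N-(a+b+1)}{2}=C(N,a,b)$ in $\mathcal{A}_{1}$. By Theorem \ref{TA} (case 1), $C(N,a,b)=\frac{|N-(a+b+1)|}{2}$ there. So we must verify $N-(a+b+1)\geq 0$, which follows from $a<b+1$ and $b\le\frac{N-2}{2}$: indeed $a+b+1<2b+2\le N$.

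This sign check is the only non-mechanical point; everything else is routine. Assembling the pieces gives \eqref{CKNidentity}, with $\mu>0$ well defined since $v\not\equiv0$ implies $A,B>0$. Extension to the completion then follows by the density remarks made after Remark \ref{Ronlytwolines}.
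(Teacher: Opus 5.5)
Your proof is correct and is essentially the paper's own argument: you expand $\bigl|\nabla v+\mu|x|^{\sigma-2}x\,v\bigr|^{2}|x|^{-2b}$, integrate the cross term by parts to get $-\mu\,(N-(a+b+1))\int_{\mathbb{R}^{N}}|v|^{2}|x|^{-(a+b+1)}dx$, and then use $\mu^{2}B=A$ together with $C(N,a,b)=\frac{N-(a+b+1)}{2}$ from Theorem \ref{TA}. Your explicit check that $N-(a+b+1)>0$ in $\mathcal{A}_{1}$ (so the absolute value can be dropped) is a detail the paper leaves implicit, and your closing remark about extending to the completion is not needed for the lemma as stated.
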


\begin{proof}
Since $\nabla\left(e^{\frac{\mu|x|^{\sigma}}{\sigma}}\right)
=\mu|x|^{\sigma-2}x\,e^{\frac{\mu|x|^{\sigma}}{\sigma}}$, the integral on the right-hand
side of \eqref{CKNidentity} equals
\[
\int_{\mathbb{R}^{N}}\frac{\left|\nabla v+\mu|x|^{\sigma-2}xv\right|^{2}}{|x|^{2b}}dx
=A+2\mu\,\mathrm{Re}\int_{\mathbb{R}^{N}}
\frac{x\cdot\nabla v\,\overline{v}}{|x|^{2b-\sigma+2}}dx
+\mu^{2}\int_{\mathbb{R}^{N}}\frac{|v|^{2}}{|x|^{2b-2\sigma+2}}dx.
\]
Here $2b-2\sigma+2=2a$, so the last term is $\mu^{2}B$. For the middle one, an
integration by parts gives
\[
\begin{aligned}
2\,\mathrm{Re}\int_{\mathbb{R}^{N}}|x|^{\sigma-2-2b}\,x\cdot\nabla v\,\overline{v}\,dx
&=\int_{\mathbb{R}^{N}}|x|^{\sigma-2-2b}\,x\cdot\nabla\left(|v|^{2}\right)dx\\
&=-\left(N+\sigma-2-2b\right)\int_{\mathbb{R}^{N}}
\frac{|v|^{2}}{|x|^{2b-\sigma+2}}dx,
\end{aligned}
\]
and $2b-\sigma+2=a+b+1$, while $N+\sigma-2-2b=N-(a+b+1)=2C(N,a,b)$ by Theorem
\ref{TA}. Writing $M:=\int_{\mathbb{R}^{N}}|v|^{2}|x|^{-(a+b+1)}dx$, we therefore obtain
\begin{equation}\label{Qmu}
0\ \leq\ \int_{\mathbb{R}^{N}}
\frac{\left|\nabla v+\mu|x|^{\sigma-2}xv\right|^{2}}{|x|^{2b}}dx
=A-2\mu\,C(N,a,b)\,M+\mu^{2}B.
\end{equation}
Finally, $\mu^{2}B=A$ by the choice of $\mu$, so the right-hand side of \eqref{Qmu}
equals $2A-2\mu C(N,a,b)M=2\mu\left(\sqrt{AB}-C(N,a,b)M\right)=2\mu\,\delta_{2}(v)$.
\end{proof}

We now record, once and for all, the scaling argument that transfers a weighted
Poincar\'e inequality into a stability estimate for the CKN deficit. For
$(a,b)\in\mathcal{A}_{1}$ and $\sigma=1+b-a$, we write
\[
d\mu_{a,b}:=e^{-\frac{2|x|^{\sigma}}{\sigma}}\frac{dx}{|x|^{2b}},
\qquad
d\nu_{a,b}:=e^{-\frac{2|x|^{\sigma}}{\sigma}}\frac{dx}{|x|^{a+b+1}}.
\]

\begin{proposition}\label{Pfourierstab}
Let $(a,b)\in\mathcal{A}_{1}$ and let $\mathcal{K}_{0},\ldots,\mathcal{K}_{n}$ be sets
of measurable functions on $\mathbb{R}^{N}$, each of them invariant under the dilations
$F \mapsto F(t\,\cdot)$, $t>0$. Assume that, for some $\kappa_{0},\ldots,\kappa_{n}
\geq 0$ and for all $W \in C_{0}^{\infty}(\mathbb{R}^{N}\setminus\{0\})$,
\[
\int_{\mathbb{R}^{N}}|\nabla W|^{2}d\mu_{a,b}
\ \geq\ \sum_{j=0}^{n}\kappa_{j}\inf_{F \in \mathcal{K}_{j}}
\int_{\mathbb{R}^{N}}|W-F|^{2}d\nu_{a,b}.
\]
Then, for all $v \in C_{0}^{\infty}(\mathbb{R}^{N}\setminus\{0\})$,
\begin{equation}\label{fourierstab}
\delta_{2}(v)\ \geq\ \sum_{j=0}^{n}\frac{\kappa_{j}}{2}
\inf_{F \in \mathcal{K}_{j},\ \lambda>0}\int_{\mathbb{R}^{N}}
\left|v-F\,e^{-\frac{\lambda|x|^{\sigma}}{\sigma}}\right|^{2}
\frac{dx}{|x|^{a+b+1}}.
\end{equation}
\end{proposition}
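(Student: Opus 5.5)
The plan is to combine the exact remainder identity of Lemma \ref{LCKNidentity} with a dilation of the assumed weighted Poincar\'e inequality. Fix $v \in C_{0}^{\infty}(\mathbb{R}^{N}\setminus\{0\})$, $v\not\equiv0$, and let $\mu=\sqrt{A/B}$ be as in Lemma \ref{LCKNidentity}. We set $V:=v\,e^{\mu|x|^{\sigma}/\sigma}$, which is again in $C_{0}^{\infty}(\mathbb{R}^{N}\setminus\{0\})$. Then \eqref{CKNidentity} reads
\[
\delta_{2}(v)=\frac{1}{2\mu}\int_{\mathbb{R}^{N}}|\nabla V|^{2}e^{-\frac{2\mu|x|^{\sigma}}{\sigma}}\frac{dx}{|x|^{2b}} .
\]
The aim is to rescale so that the weight becomes exactly $d\mu_{a,b}$.

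Put $t:=\mu^{1/\sigma}$ and $W(y):=V(y/t)$, so $x=y/t$ and $\mu|x|^{\sigma}=|y|^{\sigma}$. A change of variables gives
\[
\int|\nabla V|^{2}e^{-2\mu|x|^{\sigma}/\sigma}|x|^{-2b}dx=t^{2-N+2b}\int|\nabla W|^{2}d\mu_{a,b}.
\]
Apply the hypothesis to $W$. For each $j$ and $F\in\mathcal{K}_{j}$ we have
\[
\int|W-F|^{2}d\nu_{a,b}=t^{N-(a+b+1)}\int|V-F(t\cdot)|^{2}e^{-2\mu|x|^{\sigma}/\sigma}|x|^{-(a+b+1)}dx .
\]
By dilation invariance, $F(t\cdot)$ runs over $\mathcal{K}_{j}$ as $F$ does, so the infima are unchanged. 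Since $V-F=(v-Fe^{-\mu|x|^{\sigma}/\sigma})e^{\mu|x|^{\sigma}/\sigma}$, the exponential weight cancels exactly, and the right side becomes
\[
t^{N-(a+b+1)}\inf_{F\in\mathcal{K}_{j}}\int\Bigl|v-F e^{-\mu|x|^{\sigma}/\sigma}\Bigr|^{2}\frac{dx}{|x|^{a+b+1}} .
\]

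The remaining step is the bookkeeping of the powers of $t$. The two sides carry $t^{2-N+2b}$ and $t^{N-(a+b+1)}$. Combining them with the prefactor $1/(2\mu)=t^{-\sigma}/2$ yields a net factor
\[
t^{-\sigma}\cdot t^{-(2-N+2b)}\cdot t^{N-(a+b+1)}\cdot t^{-N}\cdot t^{N},
\]
where the $t^{\pm N}$ come from the Jacobians. More carefully, the left side is $t^{-\sigma}\,t^{2+2b-N}\int|\nabla W|^{2}d\mu_{a,b}$, and the lower bound for it is $t^{-\sigma}\,t^{2+2b-N}\,t^{N-(a+b+1)}$ times the desired infimum. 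The exponent is
\[
-\sigma+2+2b-(a+b+1)=-(1+b-a)+1+b-a=0 .
\]
So the powers cancel and we get
\[
\delta_{2}(v)\geq\sum_{j}\frac{\kappa_{j}}{2}\inf_{F\in\mathcal{K}_{j}}\int\Bigl|v-Fe^{-\mu|x|^{\sigma}/\sigma}\Bigr|^{2}\frac{dx}{|x|^{a+b+1}} .
\]
Enlarging each infimum to include all $\lambda>0$, with $\lambda=\mu$ being one admissible choice, gives \eqref{fourierstab}. The main point to check carefully is this exponent bookkeeping, where the scaling consistency of the CKN exponents $a+b+1$, $2a$, $2b$ is used. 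One should also note that $\kappa_{j}\ge0$ lets us pass the inequality through each term separately. The case $v\equiv0$ is trivial.
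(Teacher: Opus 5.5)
Your proposal is correct and follows the paper's proof essentially step for step. Both use the remainder identity of Lemma \ref{LCKNidentity} and the dilation $t=\mu^{1/\sigma}$, which turns the weight into $d\mu_{a,b}$. The powers of $t$ cancel because $2+2b-(a+b+1)=\sigma$. The dilation invariance of $\mathcal{K}_{j}$ then absorbs $F(t\,\cdot)$, and the infimum is finally enlarged from $\lambda=\mu$ to all $\lambda>0$.

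The only flaw is your first ``net factor'' display. It has the wrong sign on $2-N+2b$ and spurious factors $t^{\pm N}$. You should delete it, since the ``more carefully'' computation that follows it is the correct one.
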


\begin{proof}
Let $\mu$ be as in Lemma \ref{LCKNidentity} and set
$w:=v\,e^{\mu|x|^{\sigma}/\sigma}$, so that, by \eqref{CKNidentity},
\[
2\mu\,\delta_{2}(v)=\int_{\mathbb{R}^{N}}|\nabla w|^{2}
e^{-\frac{2\mu|x|^{\sigma}}{\sigma}}\frac{dx}{|x|^{2b}}.
\]
We now rescale. Put $t:=\mu^{1/\sigma}$, so that
$\frac{2\mu|x|^{\sigma}}{\sigma}=\frac{2|tx|^{\sigma}}{\sigma}$, and set
$W(y):=w(y/t)$. A change of variable gives
\[
\int_{\mathbb{R}^{N}}|\nabla w|^{2}e^{-\frac{2\mu|x|^{\sigma}}{\sigma}}
\frac{dx}{|x|^{2b}}
=t^{2b+2-N}\int_{\mathbb{R}^{N}}|\nabla W|^{2}d\mu_{a,b},
\]
and, for any measurable $F$,
\[
t^{2b+2-N}\int_{\mathbb{R}^{N}}|W-F|^{2}d\nu_{a,b}
=t^{2b+2-N}\,t^{\,N-(a+b+1)}\int_{\mathbb{R}^{N}}
\left|w(x)-F(tx)\right|^{2}
e^{-\frac{2\mu|x|^{\sigma}}{\sigma}}\frac{dx}{|x|^{a+b+1}},
\]
the two powers of $t$ combining into $t^{2b+2-(a+b+1)}=t^{\sigma}=\mu$. Applying the
assumed Poincar\'e inequality to $W$ and dividing by $2\mu$, we obtain
\[
\delta_{2}(v)\ \geq\ \sum_{j=0}^{n}\frac{\kappa_{j}}{2}
\inf_{F \in \mathcal{K}_{j}}\int_{\mathbb{R}^{N}}
\left|w(x)-F(tx)\right|^{2}e^{-\frac{2\mu|x|^{\sigma}}{\sigma}}
\frac{dx}{|x|^{a+b+1}}.
\]
Since each $\mathcal{K}_{j}$ is dilation invariant, $F(t\,\cdot)$ again runs over
$\mathcal{K}_{j}$; moreover
\[
\left|w(x)-F(x)\right|^{2}e^{-\frac{2\mu|x|^{\sigma}}{\sigma}}
=\left|v(x)-F(x)e^{-\frac{\mu|x|^{\sigma}}{\sigma}}\right|^{2}.
\]
Finally, the estimate can only become weaker if we take the infimum over all
$\lambda>0$ instead of the single value $\lambda=\mu$. This gives \eqref{fourierstab}.
\end{proof}

\section[The chains for the fractional families]{The chains for the fractional families: proofs of Theorems \ref{Tfraccascade} and \ref{Tfrac2cascade}}
\label{Sfraccascade}

Throughout Subsections \ref{Sblocks} and \ref{Sfourierchain} we fix $s>0$ and write
$\sigma:=1+s$, $m:=\lfloor s\rfloor+1$
and $\{s\}:=s-\lfloor s\rfloor$. We work on the Fourier side with the two measures
\[
d\mu_{s}:=e^{-\frac{2|\xi|^{\sigma}}{\sigma}}d\xi,
\qquad
d\nu_{s}:=e^{-\frac{2|\xi|^{\sigma}}{\sigma}}|\xi|^{s-1}d\xi,
\]
which are the measures attached by \cite{DLLN26} to the parameters $(a,b)=(-s,0)$.

\subsection{The two building blocks}\label{Sblocks}

The second of the two building blocks below, Lemma \ref{Llaguerre}, is a special case
of the weighted radial Poincar\'e inequalities of \cite{LLLS26}; we state
it in the normalization that we shall use, and we recall the Laguerre expansion behind
it, since the same expansion will be used again in Remark \ref{Rspectral}. The first
block, Lemma \ref{Lfacts}, is different: it factorizes each node with its own exponent,
namely $k$ on the line $b=0$ and $D_{k}/2$ on the line $b=-1$ (Lemma \ref{Lfacts2}),
whereas in \cite{LLLS26} the single exponent attached to the first
non-radial mode is used on all the non-radial nodes.

We first record the exact factorization on each non-radial node. It is the analogue,
for the line $b=0$, of the factorization used in \cite[Section 2]{DLLN26}.

\begin{lemma}\label{Lfacts}
Let $k \geq 1$ and let $\psi \in C_{0}^{\infty}((0,\infty))$ be real-valued. Write
$\psi(r)=r^{k}w(r)$. Then
\begin{equation}\label{factors}
\begin{aligned}
&\int_{0}^{\infty}\left(\left\vert\psi'(r)\right\vert^{2}+\frac{c_{k}}{r^{2}}\left\vert\psi(r)\right\vert^{2}\right)
e^{-\frac{2r^{\sigma}}{\sigma}}r^{N-1}dr\\
&\qquad=2k\int_{0}^{\infty}\left\vert\psi(r)\right\vert^{2}
e^{-\frac{2r^{\sigma}}{\sigma}}r^{N+s-2}dr
+\int_{0}^{\infty}\left\vert w'(r)\right\vert^{2}
e^{-\frac{2r^{\sigma}}{\sigma}}r^{2k+N-1}dr.
\end{aligned}
\end{equation}
\end{lemma}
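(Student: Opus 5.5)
The plan is a direct computation: substitute $\psi=r^{k}w$, expand $|\psi'|^{2}$, and integrate the cross term by parts. The terms of order zero in $w$ should then cancel exactly against $c_{k}/r^{2}$, thanks to $k^{2}+c_{k}=k(2k+N-2)$.

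Since $\psi\in C_{0}^{\infty}((0,\infty))$ and $r^{-k}$ is smooth on $(0,\infty)$, the function $w=r^{-k}\psi$ is also in $C_{0}^{\infty}((0,\infty))$. Hence all integrations by parts below have vanishing boundary terms. Write $E(r):=e^{-\frac{2r^{\sigma}}{\sigma}}$ and recall that $\sigma-1=s$.

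\textbf{Step 1 (expansion).} From $\psi'=kr^{k-1}w+r^{k}w'$ we get
\[
|\psi'|^{2}=k^{2}r^{2k-2}w^{2}+k\,r^{2k-1}(w^{2})'+r^{2k}|w'|^{2}.
\]
Also $c_{k}r^{-2}|\psi|^{2}=c_{k}r^{2k-2}w^{2}$. Multiplying by $E\,r^{N-1}$, the left-hand side of \eqref{factors} becomes
\[
(k^{2}+c_{k})\int_{0}^{\infty} w^{2}r^{2k+N-3}E\,dr
+k\int_{0}^{\infty}(w^{2})'\,r^{2k+N-2}E\,dr
+\int_{0}^{\infty}|w'|^{2}r^{2k+N-1}E\,dr .
\]

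\textbf{Step 2 (integration by parts of the cross term).} We have
\[
\frac{d}{dr}\left(r^{2k+N-2}E\right)=\left[(2k+N-2)r^{2k+N-3}-2r^{2k+N-2+s}\right]E .
\]
Therefore
\[
k\int_{0}^{\infty}(w^{2})'\,r^{2k+N-2}E\,dr
=-k(2k+N-2)\int_{0}^{\infty}w^{2}r^{2k+N-3}E\,dr+2k\int_{0}^{\infty}w^{2}r^{2k+N+s-2}E\,dr .
\]

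\textbf{Step 3 (cancellation).} Since $c_{k}=k(N+k-2)$, we have $k^{2}+c_{k}=k(2k+N-2)$. This is exactly the on-line identity \eqref{Dk2k} in disguise, and it makes the $r^{2k+N-3}$ terms from Steps 1 and 2 cancel. What remains is
\[
2k\int_{0}^{\infty} w^{2}r^{2k}\,r^{N+s-2}E\,dr+\int_{0}^{\infty}|w'|^{2}r^{2k+N-1}E\,dr .
\]
Using $w^{2}r^{2k}=|\psi|^{2}$, this is precisely the right-hand side of \eqref{factors}.

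There is no genuine obstacle here. The only point needing care is the bookkeeping of exponents: the derivative of the Gaussian-type weight produces $r^{\sigma-1}=r^{s}$, which is what gives the weight $r^{N+s-2}$ of $\nu_{s}$. For complex-valued $\psi$ the same argument works with $2\,\mathrm{Re}(w'\bar w)=(|w|^{2})'$, although the lemma only requires the real case.
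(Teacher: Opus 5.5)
Your proposal is correct and follows the paper's own proof: substitute $\psi=r^{k}w$, integrate the cross term $k\,r^{2k+N-2}(w^{2})'E$ by parts (the derivative of the weight produces the $r^{N+s-2}$ term), and cancel the $r^{2k+N-3}$ terms using $k^{2}+c_{k}=k(2k+N-2)$. Your remark that $w=r^{-k}\psi\in C_{0}^{\infty}((0,\infty))$, so that no boundary terms arise, makes explicit a point the paper leaves tacit.
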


\begin{proof}
Since $\psi'=kr^{k-1}w+r^{k}w'$, we have
\[
\left\vert\psi'\right\vert^{2}+\frac{c_{k}}{r^{2}}\left\vert\psi\right\vert^{2}
=\left(k^{2}+c_{k}\right)r^{2k-2}\left\vert w\right\vert^{2}
+2k\,r^{2k-1}ww'+r^{2k}\left\vert w'\right\vert^{2}.
\]
For the middle term, an integration by parts gives
\[
\begin{aligned}
2k\int_{0}^{\infty}r^{2k-1}ww'\,e^{-\frac{2r^{\sigma}}{\sigma}}r^{N-1}dr
&=k\int_{0}^{\infty}r^{2k+N-2}\left(\left\vert w\right\vert^{2}\right)'
e^{-\frac{2r^{\sigma}}{\sigma}}dr\\
&=-k\int_{0}^{\infty}\left\vert w\right\vert^{2}\,
\frac{d}{dr}\left(r^{2k+N-2}e^{-\frac{2r^{\sigma}}{\sigma}}\right)dr\\
&=-k(2k+N-2)\int_{0}^{\infty}\left\vert w\right\vert^{2}r^{2k+N-3}
e^{-\frac{2r^{\sigma}}{\sigma}}dr\\
&\quad+2k\int_{0}^{\infty}\left\vert w\right\vert^{2}r^{2k+N-2+\sigma-1}
e^{-\frac{2r^{\sigma}}{\sigma}}dr.
\end{aligned}
\]
Since $\sigma-1=s$ and $\left\vert\psi\right\vert^{2}=r^{2k}\left\vert w\right\vert^{2}$, the last integral is exactly
$\int_{0}^{\infty}\left\vert\psi\right\vert^{2}e^{-\frac{2r^{\sigma}}{\sigma}}r^{N+s-2}dr$. Finally,
\[
k^{2}+c_{k}-k(2k+N-2)=k^{2}+k(N+k-2)-2k^{2}-k(N-2)=0,
\]
so that the two remaining terms cancel, and \eqref{factors} follows.
\end{proof}

We note that \eqref{factors} is an identity, and that it exhibits $2k$, that is
$D_{k}^{(0)}$ by \eqref{Dk2k}, as the sharp constant on the node $k$, the equality
occurring exactly when $w$ is constant, that is, when $\psi(r)=d\,r^{k}$.

The second building block is a Poincar\'e inequality for the one dimensional weight
$r^{M-1}e^{-2r^{\sigma}/\sigma}$, which we shall apply with $M=N$ on the radial node and
with $M=2k+N$ on the remainder of \eqref{factors}. It contains Theorem \ref{TD} as the case $\sigma=2$.

\begin{lemma}\label{Llaguerre}
Let $M>0$, let $\sigma>0$, and let $w$ be real-valued and smooth, with all the
integrals below finite. Set
\[
\begin{aligned}
\mathcal{A}[w]&:=\inf_{c}\int_{0}^{\infty}\left\vert w-c\right\vert^{2}
e^{-\frac{2r^{\sigma}}{\sigma}}r^{M+\sigma-3}dr,\\
\mathcal{B}[w]&:=\inf_{c,d}\int_{0}^{\infty}
\left\vert w-c-d\,r^{\sigma}\right\vert^{2}
e^{-\frac{2r^{\sigma}}{\sigma}}r^{M+\sigma-3}dr.
\end{aligned}
\]
Then
\begin{equation}\label{laguerreP}
\int_{0}^{\infty}\left\vert w'\right\vert^{2}
e^{-\frac{2r^{\sigma}}{\sigma}}r^{M-1}dr
\ \geq\ 2\sigma\,\mathcal{A}[w]+2\sigma\,\mathcal{B}[w],
\end{equation}
where the infima are taken over real $c,d$. Since $\mathcal{B}[w]\geq0$, the inequality
\eqref{laguerreP} contains in particular the weighted Poincar\'e inequality
\begin{equation}\label{laguerrePweak}
\int_{0}^{\infty}\left\vert w'\right\vert^{2}
e^{-\frac{2r^{\sigma}}{\sigma}}r^{M-1}dr
\ \geq\ 2\sigma\,\mathcal{A}[w].
\end{equation}
Both constants $2\sigma$ are sharp: in the completion of $C_{0}^{\infty}([0,\infty))$
under the norm on the left-hand side, the equality holds in \eqref{laguerreP} if and
only if $w=c+d\,r^{\sigma}+e\,r^{2\sigma}$, and in \eqref{laguerrePweak} if and only if
$w=c+d\,r^{\sigma}$.
\end{lemma}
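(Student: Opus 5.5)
The plan is to reduce \eqref{laguerreP} to a Laguerre-polynomial spectral decomposition after the change of variable $t:=\frac{2r^{\sigma}}{\sigma}$.

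\textbf{Step 1: change of variables.} We have $r=(\sigma t/2)^{1/\sigma}$ and $dr=\frac{1}{\sigma}(\sigma t/2)^{1/\sigma}t^{-1}\,dt$. Writing $w(r)=g(t)$, we get $w'(r)=g'(t)\,\frac{dt}{dr}=2r^{\sigma-1}g'(t)$. A direct computation then turns both sides into integrals against gamma weights:
\[
\int_{0}^{\infty}|w'|^{2}e^{-\frac{2r^{\sigma}}{\sigma}}r^{M-1}dr
=C\int_{0}^{\infty}|g'(t)|^{2}e^{-t}t^{\gamma+1}\,dt,
\qquad
\int_{0}^{\infty}|w|^{2}e^{-\frac{2r^{\sigma}}{\sigma}}r^{M+\sigma-3}dr
=C'\int_{0}^{\infty}|g|^{2}e^{-t}t^{\gamma}\,dt.
\]
Here $\gamma:=\frac{M-2}{\sigma}$, and the ratio $C/C'$ is a fixed constant. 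Tracking the powers of $\sigma$ and $2$, I expect $C/C'=2\sigma$.

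The competitor $c+d\,r^{\sigma}$ becomes $c+d'\,t$. Since $M>0$, we have $\gamma>-1$, so the weight $t^{\gamma}e^{-t}$ is integrable near $0$. This is exactly where the hypothesis $M>0$ is used.

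\textbf{Step 2: Laguerre expansion.} Expand $g=\sum_{n\ge0}a_{n}L_{n}^{(\gamma)}$ in $L^{2}(t^{\gamma}e^{-t}dt)$, with the $L_{n}^{(\gamma)}$ orthonormal. The Laguerre operator $-t\,g''-(\gamma+1-t)g'$ has these polynomials as eigenfunctions with eigenvalue $n$. Its Dirichlet form is $\int |g'|^{2}t^{\gamma+1}e^{-t}dt=\sum_{n}n|a_{n}|^{2}$, obtained by integrating by parts. The boundary terms vanish, first for smooth compactly supported $g$, and then by density in the completion.

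The infima compute directly:
\[
\inf_{c}\int|g-c|^{2}t^{\gamma}e^{-t}dt=\sum_{n\ge1}|a_{n}|^{2},
\qquad
\inf_{c,d'}\int|g-c-d't|^{2}t^{\gamma}e^{-t}dt=\sum_{n\ge2}|a_{n}|^{2}.
\]
Hence the inequality reduces to
\[
\sum_{n}n|a_{n}|^{2}\ \ge\ \sum_{n\ge1}|a_{n}|^{2}+\sum_{n\ge2}|a_{n}|^{2},
\]
that is, $n\ge \mathbf{1}_{n\ge1}+\mathbf{1}_{n\ge2}$ for every $n\ge0$, which is trivially true. Equality holds iff $n=\mathbf{1}_{n\ge1}+\mathbf{1}_{n\ge2}$ for every $n$ with $a_{n}\neq0$, i.e. iff $a_{n}=0$ for all $n\ge3$. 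In the original variable this means $w=c+d\,r^{\sigma}+e\,r^{2\sigma}$.

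For \eqref{laguerrePweak}, equality means $n=\mathbf{1}_{n\ge1}$ on the support of $(a_{n})$, i.e. $a_{n}=0$ for $n\ge2$, giving $w=c+d\,r^{\sigma}$. Sharpness of both constants $2\sigma$ follows from these equality cases, once one checks that the polynomials $1,t,t^{2}$ lie in the completion. This holds by a cutoff at infinity, with the Gaussian-type decay controlling the error.

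\textbf{Main obstacle.} The delicate point is the functional-analytic justification, not the algebra. One must show that the form domain of the completion of $C_{0}^{\infty}([0,\infty))$ coincides with $\{\sum n|a_{n}|^{2}<\infty\}$. In particular, no boundary condition may arise at $t=0$ when $-1<\gamma<0$, where the endpoint is limit-circle. To settle this, I would use the density of polynomials in $L^{2}(t^{\gamma}e^{-t}dt)$ together with the fact that test functions smooth in $r$ up to $0$ correspond to functions of $t$ with a Neumann-type behavior, which selects the Laguerre (Friedrichs) extension. A secondary check is the exact constant $2\sigma$ in Step 1.
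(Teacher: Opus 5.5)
Your route is the paper's own: the substitution $t=\frac{2r^{\sigma}}{\sigma}$, the Laguerre expansion with eigenvalues $n$, and the reduction to $\sum_{n\ge2}(n-2)|a_n|^2\ge0$. Your $\gamma=\frac{M-2}{\sigma}$ is the paper's $\gamma-1$, and your expected ratio is right: $C=2(\sigma/2)^{\gamma+1}$ and $C'=\frac12(\sigma/2)^{\gamma}$ give $C/C'=2\sigma$.

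The one false step is ``since $M>0$, we have $\gamma>-1$''. In fact $\gamma>-1$ is equivalent to $M+\sigma>2$, which fails for instance when $M=1$, $\sigma=\frac12$, where $\gamma=-2$. In that regime $r^{M+\sigma-3}=r^{-3/2}$ is not integrable at $0$, so constants are not in $L^{2}(t^{\gamma}e^{-t}dt)$ and the Laguerre expansion is unavailable. The conclusion itself then breaks. Take $w(r)=r=r^{2\sigma}$; the competitors are forced to have $c=0$, and one computes $\int_0^\infty|w'|^2e^{-4\sqrt r}\,dr=\frac18$, while $2\sigma(\mathcal{A}[w]+\mathcal{B}[w])=\frac1{16}+\frac1{32}=\frac3{32}$. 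So this claimed equality case is strict. The hypothesis actually needed is $M+\sigma>2$, both by your argument and by the paper's, whose imported condition reads $\frac{\sigma+M-2}{\sigma}>0$. It holds in every application in the paper, where $M\ge2$, so you should assume it rather than try to derive it from $M>0$.

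Your ``main obstacle'' needs no self-adjoint extension theory, and the label ``Friedrichs'' is inaccurate in exactly the range you worry about. For $-1<\gamma<0$ the endpoint $t=0$ is regular, since both $t^{-\gamma-1}e^{t}$ and $t^{\gamma}e^{-t}$ are integrable there. There the Friedrichs extension of the minimal operator imposes $g(0)=0$, which is not the Laguerre realization. Argue directly instead:

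\begin{enumerate}
\item For $g$ coming from $w\in C_0^\infty([0,\infty))$, integration by parts gives $\int_0^\infty g'\,(L_n^{(\gamma)})'\,t^{\gamma+1}e^{-t}\,dt=n\,a_n$. The boundary term $g(0)\,t^{\gamma+1}(L_n^{(\gamma)})'(t)$ vanishes as $t\to0$ because $\gamma+1>0$.
\item The $(L_n^{(\gamma)})'$, $n\ge1$, are multiples of $L_{n-1}^{(\gamma+1)}$, hence a complete orthogonal system in $L^{2}(t^{\gamma+1}e^{-t}dt)$ with squared norms $n$.
\item Parseval then gives $\int_0^\infty|g'|^2t^{\gamma+1}e^{-t}\,dt=\sum_n n|a_n|^2$, and this identity passes to the completion by continuity.
\end{enumerate}

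Likewise, a cutoff at infinity is not enough to put $r^{\sigma}$ and $r^{2\sigma}$ in the completion. You must also regularize at $r=0$ when the exponent is not an integer. For example, $(r^{2}+\varepsilon^{2})^{\sigma/2}$ approximates $r^{\sigma}$ by dominated convergence, because $2\sigma+M-2>0$.
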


\begin{proof}

Both inequalities, in a more general form, are contained in \cite{LLLS26}: with the
choice of parameters $\delta=\frac{2}{\sigma}$, $\tau=\sigma$ and $\beta=M-1$, for which
the measure $e^{-\delta r^{\tau}}r^{\beta}dr$ of \cite{LLLS26} becomes
$e^{-\frac{2r^{\sigma}}{\sigma}}r^{M-1}dr$, we have $\delta\tau^{2}=2\sigma$ and
$r^{\beta+\tau-2}=r^{M+\sigma-3}$, while the condition $\frac{\beta+\tau-1}{\tau}>0$
becomes $\gamma>0$, where
\[
\gamma:=\frac{\sigma+M-2}{\sigma}.
\]
With these substitutions, \eqref{laguerrePweak} is \cite[Theorem 2.3]{LLLS26} and
\eqref{laguerreP} is \cite[Theorem 6.1]{LLLS26}. We nevertheless give the short spectral
proof, since the same expansion will be used again in Remark \ref{Rspectral}, and since
it yields at the same time the equality cases.

Performing the change of variable $t=\frac{2r^{\sigma}}{\sigma}$
and writing $w(r)=W(t)$, we have
\[
\int_{0}^{\infty}\left\vert w'\right\vert^{2}
e^{-\frac{2r^{\sigma}}{\sigma}}r^{M-1}dr
=2\left(\frac{\sigma}{2}\right)^{\gamma}
\int_{0}^{\infty}\left\vert\dot{W}\right\vert^{2}t^{\gamma}e^{-t}\,dt,
\]
and, in the same way,
\[
\int_{0}^{\infty}\left\vert w-c\right\vert^{2}
e^{-\frac{2r^{\sigma}}{\sigma}}r^{M+\sigma-3}dr
=\frac{1}{2}\left(\frac{\sigma}{2}\right)^{\gamma-1}
\int_{0}^{\infty}\left\vert W-c\right\vert^{2}t^{\gamma-1}e^{-t}\,dt.
\]
The eigenvalues of the Laguerre operator
\[
\mathcal{L}W:=-\frac{1}{t^{\gamma-1}e^{-t}}
\frac{d}{dt}\left(t^{\gamma}e^{-t}\dot{W}\right)
\]
are the nonnegative integers $\lambda_{n}=n$, and the corresponding eigenfunctions are
the generalized Laguerre polynomials $L_{n}^{(\gamma-1)}$, which form an orthogonal
basis of $L^{2}\left(t^{\gamma-1}e^{-t}dt\right)$; see for instance
\cite[Chapter V]{Szego} or \cite[Section 2.2]{LLLS26}. Writing
$W=\sum_{n\geq0}a_{n}L_{n}^{(\gamma-1)}$ and denoting by $h_{n}>0$ the squared norms of
the $L_{n}^{(\gamma-1)}$, we obtain
\begin{equation}\label{laguerreexp}
\begin{aligned}
\int_{0}^{\infty}\left\vert w'\right\vert^{2}
e^{-\frac{2r^{\sigma}}{\sigma}}r^{M-1}dr
&=2\left(\frac{\sigma}{2}\right)^{\gamma}\sum_{n\geq0}n\,a_{n}^{2}h_{n},\\
\mathcal{A}[w]=\frac{1}{2}\left(\frac{\sigma}{2}\right)^{\gamma-1}
\sum_{n\geq1}a_{n}^{2}h_{n},
&\qquad
\mathcal{B}[w]=\frac{1}{2}\left(\frac{\sigma}{2}\right)^{\gamma-1}
\sum_{n\geq2}a_{n}^{2}h_{n}.
\end{aligned}
\end{equation}
The ratio of the two normalizing factors is exactly
$2\left(\frac{\sigma}{2}\right)^{\gamma}\big/
\frac{1}{2}\left(\frac{\sigma}{2}\right)^{\gamma-1}=2\sigma$. Hence \eqref{laguerreP} reads
$\sum_{n\geq2}(n-2)a_{n}^{2}h_{n}\geq0$, with equality if and only if $a_{n}=0$ for all
$n \geq 3$, and \eqref{laguerrePweak} reads
$\sum_{n\geq2}(n-1)a_{n}^{2}h_{n}\geq0$, with equality if and only if $a_{n}=0$ for all
$n \geq 2$. Undoing the change of variable, these give
$w=c+d\,r^{\sigma}+e\,r^{2\sigma}$ and $w=c+d\,r^{\sigma}$ respectively.
\end{proof}

\subsection{The chain on the Fourier side}\label{Sfourierchain}

\begin{theorem}\label{TfracPoincare}
Let $N \geq 2$ and $s>0$, and let $\alpha_{0}=\cdots=\alpha_{m-1}=2$,
$\alpha_{m}=2\{s\}$ and $\alpha_{m+1}=2\left(1-\{s\}\right)$. Then, for every
$v \in C_{0}^{\infty}(\mathbb{R}^{N})$, real- or complex-valued,
\begin{equation}\label{fracPoincare}
\int_{\mathbb{R}^{N}}|\nabla v|^{2}d\mu_{s}
\ \geq\ \sum_{j=0}^{m}\alpha_{j}
\inf_{P \in \mathcal{Y}_{j}}\int_{\mathbb{R}^{N}}|v-P|^{2}d\nu_{s}
\ +\ \alpha_{m+1}\inf_{\substack{P \in \mathcal{Y}_{m}\\ d \in \mathbb{C}}}
\int_{\mathbb{R}^{N}}\left|v-P-d|\xi|^{\sigma}\right|^{2}d\nu_{s}.
\end{equation}
\end{theorem}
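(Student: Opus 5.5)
The plan is to diagonalize both sides of \eqref{fracPoincare} in spherical harmonics and then prove a one-dimensional inequality on each node, using Lemma \ref{Lfacts} and Lemma \ref{Llaguerre}.

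\textbf{Decoupling.} Write $v=\sum_{k,i}\psi_{k,i}(r)\phi_{k,i}(\theta)$ as in \eqref{sphdecomp}. Since $d\mu_{s}$ and $d\nu_{s}$ are radial, \eqref{sphgrad} splits the left-hand side into node energies
\[
E_{k}[\psi]=\int_{0}^{\infty}\left(|\psi'|^{2}+\tfrac{c_{k}}{r^{2}}|\psi|^{2}\right)e^{-\frac{2r^{\sigma}}{\sigma}}r^{N-1}dr .
\]
A harmonic polynomial $P\in\mathcal{Y}_{j}$ is exactly $\sum_{k\le j}\sum_{i}d_{k,i}r^{k}\phi_{k,i}$ with arbitrary $d_{k,i}\in\mathbb{C}$. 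The extra term $d|\xi|^{\sigma}$ lives only on the radial node. Hence each infimum on the right decouples into a sum over nodes:
\begin{itemize}
\item on a node with $k\le j$, it is the squared distance in $L^{2}(e^{-2r^{\sigma}/\sigma}r^{N+s-2}dr)$ from $\psi_{k,i}$ to $\mathrm{span}\{r^{k}\}$;
\item on a node with $k>j$, it is the full squared norm of $\psi_{k,i}$;
\item for the last term, on the radial node it is the distance to $\mathrm{span}\{1,r^{\sigma}\}$.
\end{itemize}
Complex-valued $\psi$ are handled by splitting into real and imaginary parts, since all quantities are sums of the two contributions and the lemmas are stated for real functions. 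It therefore suffices to prove a real one-dimensional inequality node by node.

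\textbf{Bookkeeping of the constants.} We have
\[
\sum_{j=0}^{m}\alpha_{j}=2m+2\{s\}=2(1+s)=2\sigma,\qquad \alpha_{m+1}=2(1-\{s\})\le 2 .
\]

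\textbf{Radial node $k=0$.} Apply \eqref{laguerreP} with $M=N$. The weight there is $r^{M+\sigma-3}=r^{N+s-2}$, which is exactly the radial density of $d\nu_{s}$. This gives $E_{0}\ge 2\sigma\mathcal{A}+2\sigma\mathcal{B}$. The required right-hand side is $2\sigma\mathcal{A}+\alpha_{m+1}\mathcal{B}$, and $\alpha_{m+1}\le 2\le 2\sigma$, so this node is done.

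\textbf{Nodes $k\ge1$.} Write $\psi=r^{k}w$. By Lemma \ref{Lfacts},
\[
E_{k}=2k\|\psi\|_{\nu}^{2}+\int_{0}^{\infty}|w'|^{2}e^{-\frac{2r^{\sigma}}{\sigma}}r^{2k+N-1}dr .
\]
Apply \eqref{laguerrePweak} with $M=2k+N$. Its weight $r^{2k+N+\sigma-3}$ turns $\inf_{c}\int|w-c|^{2}\cdots$ into $\mathrm{dist}_{\nu}^{2}(\psi,\mathrm{span}\{r^{k}\})$. Hence
\[
E_{k}\ge 2k\|\psi\|_{\nu}^{2}+2\sigma\,\mathrm{dist}_{\nu}^{2}(\psi,r^{k}).
\]
We compare with the required coefficients in two cases.
\begin{itemize}
\item If $1\le k\le m$, the norm needs coefficient $\sum_{j<k}\alpha_{j}=2k$. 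The distance needs coefficient $\sum_{j=k}^{m+1}\alpha_{j}=2\sigma+2(1-\{s\})-2k$, which is at most $2\sigma$ because $k\ge1$.
\item If $k\ge m+1$, everything is a full norm. The needed coefficient is $\sum_{j=0}^{m+1}\alpha_{j}=2(m+1)\le 2k$. The distance term is then simply dropped, using $\mathrm{dist}\le\|\cdot\|$ wherever convenient.
\end{itemize}
Summing over all nodes gives \eqref{fracPoincare}.

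\textbf{Main obstacle.} The delicate step is the functional-analytic justification rather than the algebra. Lemma \ref{Lfacts} is stated for $\psi\in C_{0}^{\infty}((0,\infty))$, while the node components of $v\in C_{0}^{\infty}(\mathbb{R}^{N})$ do not vanish near $0$; they behave like $r^{k}$ times a smooth function of $r^{2}$. I would first argue by the capacity and density remark of Section \ref{Sprelim}, or by a direct cutoff near $r=0$, using that the boundary term $r^{2k+N-2}|w|^{2}$ vanishes at $0$ for $N\ge2$. I would then pass to the limit in both sides, using lower semicontinuity of the infima. A second point is to check that the infimum of the full series equals the sum of the node-wise infima. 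This holds because the competitor sets are products over the nodes.
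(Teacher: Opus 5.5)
Your proposal is correct and follows the paper's proof essentially step by step. You use the same spherical-harmonic decoupling of the infima into node-wise distances, the bound $2\sigma\mathcal{A}+2\sigma\mathcal{B}$ on the radial node from \eqref{laguerreP} with $M=N$, the bound $2k\|\psi\|_{\nu}^{2}+2\sigma\,\mathrm{dist}_{\nu}^{2}(\psi,r^{k})$ on the node $k\ge1$ from Lemma \ref{Lfacts} and \eqref{laguerrePweak} with $M=2k+N$, and the same check of the coefficient conditions (your remark that the boundary term $r^{N-2}|\psi|^{2}$ vanishes at the origin justifies a point the paper uses implicitly).
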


\begin{proof}
We may assume that $v$ is real-valued, the complex case following by applying the
result to the real and the imaginary parts and adding. We decompose
$v(\xi)=\sum_{k\geq0}\sum_{i=1}^{N_{k}}v_{k,i}(r)\phi_{k,i}(\theta)$ in spherical
harmonics as in Subsection \ref{Ssph}, with $r=|\xi|$ and $\theta=\xi/|\xi|$. By
\eqref{sphgrad},
\[
\int_{\mathbb{R}^{N}}|\nabla v|^{2}d\mu_{s}
=\sum_{k,i}\underbrace{\int_{0}^{\infty}
\left(\left\vert v_{k,i}'\right\vert^{2}+\frac{c_{k}}{r^{2}}\left\vert v_{k,i}\right\vert^{2}\right)
e^{-\frac{2r^{\sigma}}{\sigma}}r^{N-1}dr}_{=:\mathcal{D}_{k,i}},
\]
and
\[
\int_{\mathbb{R}^{N}}|v|^{2}d\nu_{s}
=\sum_{k,i}\underbrace{\int_{0}^{\infty}\left\vert v_{k,i}\right\vert^{2}
e^{-\frac{2r^{\sigma}}{\sigma}}r^{N+s-2}dr}_{=:\mathcal{N}_{k,i}}.
\]
Moreover, since $|\xi|^{k}\phi_{k,i}(\theta)$ is the solid harmonic of degree $k$
attached to $\phi_{k,i}$, and since every $P \in \mathcal{Y}_{j}$ is uniquely a sum of
solid harmonics of degrees $0,\ldots,j$, we have, for $0 \leq j \leq m$,
\[
\inf_{P \in \mathcal{Y}_{j}}\int_{\mathbb{R}^{N}}|v-P|^{2}d\nu_{s}
=\sum_{k\leq j,\,i}E_{k,i}+\sum_{k>j,\,i}\mathcal{N}_{k,i},
\]
where
\[
E_{k,i}:=\inf_{d}\int_{0}^{\infty}\left\vert v_{k,i}-d\,r^{k}\right\vert^{2}
e^{-\frac{2r^{\sigma}}{\sigma}}r^{N+s-2}dr,
\]
and the last infimum in \eqref{fracPoincare} equals
$B_{0}+\sum_{1\leq k\leq m,\,i}E_{k,i}+\sum_{k>m,\,i}\mathcal{N}_{k,i}$, where
$B_{0}$ denotes the quantity $\mathcal{B}$ of Lemma \ref{Llaguerre} associated with
$v_{0,1}$. We also write $A_{0}:=E_{0,1}$.

It therefore suffices to check, node by node, that the coefficients
$\alpha_{0},\ldots,\alpha_{m+1}$ are admissible. We use the two building blocks.
On the radial node, \eqref{laguerreP} with $M=N$ gives
\[
\mathcal{D}_{0,1}\ \geq\ C_{1}A_{0}+C_{1}B_{0},
\qquad C_{1}:=2\sigma=2(1+s),
\]
so that the required conditions are $\sum_{j=0}^{m}\alpha_{j}\leq C_{1}$ and
$\alpha_{m+1}\leq C_{1}$. On the node $k \geq 1$, Lemma \ref{Lfacts} followed by
\eqref{laguerrePweak} with $M=2k+N$ gives
\[
\mathcal{D}_{k,i}\ \geq\ 2k\,\mathcal{N}_{k,i}+C_{1}E_{k,i},
\]
so that the required conditions are $\sum_{j<k}\alpha_{j}\leq 2k$ and
$\sum_{j\geq k}\alpha_{j}\leq C_{1}$ for $1 \leq k \leq m$, together with
$\sum_{j=0}^{m+1}\alpha_{j}\leq 2(m+1)$ for the nodes $k \geq m+1$.

All these conditions hold for our choice. Indeed, $\sum_{j<k}\alpha_{j}=2k$ for
$1 \leq k \leq m$, and $\sum_{j=0}^{m+1}\alpha_{j}=2m+2$. Moreover
$\sum_{j=0}^{m}\alpha_{j}=2m+2\{s\}=2\lfloor s\rfloor+2+2\{s\}=2(1+s)=C_{1}$, and
$\alpha_{m+1}=2(1-\{s\})\leq 2\leq C_{1}$. Finally, for $1 \leq k \leq m$,
\[
\sum_{j\geq k}\alpha_{j}=2(m+1)-2k\leq 2m\leq 2\lfloor s\rfloor+2=2(1+s)-2\{s\}
\leq C_{1}.
\]
This proves \eqref{fracPoincare}.
\end{proof}

\subsection{The line \texorpdfstring{$b=-1$}{b=-1}: proof of Theorem \ref{Tfrac2cascade}}
\label{Sbm1}

The two building blocks of Subsection \ref{Sblocks} were stated for the line $b=0$, but
they only used the structure of the problem, and they carry over. We indicate the
changes. For the parameters $(a,b)=(-s,-1)$ we have $1+b-a=s$, so that here
$\sigma=s$, and the two measures are
\[
d\widetilde{\mu}_{s}:=e^{-\frac{2|\xi|^{s}}{s}}|\xi|^{2}d\xi,
\qquad
d\widetilde{\nu}_{s}:=e^{-\frac{2|\xi|^{s}}{s}}|\xi|^{s}d\xi.
\]

\begin{lemma}\label{Lfacts2}
Let $k \geq 1$, let $\psi \in C_{0}^{\infty}((0,\infty))$ be real-valued, and write
$\psi(r)=r^{D_{k}/2}w(r)$. Then
\[
\begin{aligned}
&\int_{0}^{\infty}\left(\left\vert\psi'\right\vert^{2}+\frac{c_{k}}{r^{2}}\left\vert\psi\right\vert^{2}\right)
e^{-\frac{2r^{s}}{s}}r^{N+1}dr\\
&\qquad=D_{k}\int_{0}^{\infty}\left\vert\psi\right\vert^{2}e^{-\frac{2r^{s}}{s}}r^{N+s-1}dr
+\int_{0}^{\infty}\left\vert w'\right\vert^{2}e^{-\frac{2r^{s}}{s}}r^{D_{k}+N+1}dr.
\end{aligned}
\]
\end{lemma}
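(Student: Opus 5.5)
We will copy the proof of Lemma \ref{Lfacts}, replacing the exponent $k$ by $\gamma:=D_{k}/2$ and the weight $e^{-2r^{\sigma}/\sigma}r^{N-1}$ by $e^{-2r^{s}/s}r^{N+1}$.

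First, $\psi'=\gamma r^{\gamma-1}w+r^{\gamma}w'$ gives the pointwise expansion
\[
|\psi'|^{2}+\frac{c_{k}}{r^{2}}|\psi|^{2}
=(\gamma^{2}+c_{k})r^{2\gamma-2}|w|^{2}+2\gamma r^{2\gamma-1}ww'+r^{2\gamma}|w'|^{2}.
\]
After multiplying by the weight, the last term is exactly $\int|w'|^{2}e^{-2r^{s}/s}r^{D_{k}+N+1}dr$, since $2\gamma+N+1=D_{k}+N+1$.

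For the cross term we write $2ww'=(|w|^{2})'$ and integrate by parts; there are no boundary terms because $\psi$ has compact support in $(0,\infty)$. This gives
\[
-\gamma\int|w|^{2}\frac{d}{dr}\Big(r^{2\gamma+N}e^{-\frac{2r^{s}}{s}}\Big)dr
=-\gamma(2\gamma+N)\int|w|^{2}r^{2\gamma+N-1}e^{-\frac{2r^{s}}{s}}dr
+2\gamma\int|w|^{2}r^{2\gamma+N+s-1}e^{-\frac{2r^{s}}{s}}dr.
\]
The second integral equals $D_{k}\int|\psi|^{2}e^{-2r^{s}/s}r^{N+s-1}dr$, because $|\psi|^{2}=r^{2\gamma}|w|^{2}$ and $2\gamma=D_{k}$. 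This is the first term on the right-hand side of the identity.

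The first integral has the same power $r^{2\gamma+N-1}$ as the $(\gamma^{2}+c_{k})$ term. The two therefore cancel if and only if
\[
\gamma^{2}+c_{k}-\gamma(2\gamma+N)=c_{k}-\gamma^{2}-N\gamma=0 .
\]
This is the quadratic $\gamma^{2}+N\gamma-c_{k}=0$, whose positive root is $\gamma=\frac{\sqrt{N^{2}+4c_{k}}-N}{2}=D_{k}/2$ by \eqref{Dk}. This is exactly why $D_{k}$ is defined as it is.

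The only delicate point is this cancellation. It is where the choice of exponent $D_{k}/2$ enters, playing the role of the identity $k^{2}+c_{k}=k(2k+N-2)$ in Lemma \ref{Lfacts}. Everything else is routine bookkeeping of powers of $r$.
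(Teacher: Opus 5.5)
Your proof is correct and follows the paper's argument essentially line by line. The paper also reruns the computation of Lemma \ref{Lfacts} with exponent $D_{k}/2$ and weight $r^{N+1}$, integrates the cross term by parts, and cancels the $r^{D_{k}+N-1}$ terms because $D_{k}/2$ is the positive root of $X^{2}+NX-c_{k}=0$.
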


\begin{proof}
We repeat the computation of Lemma \ref{Lfacts} with $\delta:=D_{k}/2$ in place of $k$
and with the weight $r^{N+1}$ in place of $r^{N-1}$. The integration by parts now
produces
\[
-\delta(2\delta+N)\int_{0}^{\infty}\left\vert w\right\vert^{2}r^{2\delta+N-1}
e^{-\frac{2r^{s}}{s}}dr
+2\delta\int_{0}^{\infty}\left\vert w\right\vert^{2}r^{2\delta+N+s-1}e^{-\frac{2r^{s}}{s}}dr,
\]
and the second integral is $D_{k}\int_{0}^{\infty}\left\vert\psi\right\vert^{2}
e^{-2r^{s}/s}r^{N+s-1}dr$ since $2\delta=D_{k}$. The coefficient of the remaining term
is
\[
\delta^{2}+c_{k}-\delta(2\delta+N)=-\left(\delta^{2}+N\delta-c_{k}\right),
\]
which vanishes precisely because $\delta=\frac{\sqrt{N^{2}+4c_{k}}-N}{2}$ is the
positive root of $X^{2}+NX-c_{k}=0$, that is, precisely because $D_{k}$ is defined by
\eqref{Dk}.
\end{proof}

Lemma \ref{Llaguerre} with $\sigma=s$ gives, for every $M>0$,
\begin{equation}\label{laguerre2}
\int_{0}^{\infty}\left\vert w'\right\vert^{2}e^{-\frac{2r^{s}}{s}}r^{M-1}dr
\ \geq\ 2s\,\inf_{c}\int_{0}^{\infty}\left\vert w-c\right\vert^{2}
e^{-\frac{2r^{s}}{s}}r^{M+s-3}dr,
\end{equation}
which is \eqref{laguerrePweak}, together with the stronger form \eqref{laguerreP}, both
constants being $2s=C_{1}$. We apply \eqref{laguerreP} with $M=N+2$ on the radial node,
and \eqref{laguerre2} with
$M=D_{k}+N+2$ on the remainder of Lemma \ref{Lfacts2}; in the latter case
$r^{M+s-3}=r^{D_{k}+N+s-1}$, so that the resulting quantity is exactly
$\inf_{d}\int_{0}^{\infty}\left\vert\psi-d\,r^{D_{k}/2}\right\vert^{2}
e^{-2r^{s}/s}r^{N+s-1}dr$.

\begin{proof}[Proof of Theorem \ref{Tfrac2cascade}]
The two blocks just obtained read
\[
\mathcal{D}_{0,1}\geq C_{1}A_{0}+C_{1}B_{0},
\qquad
\mathcal{D}_{k,i}\geq D_{k}\mathcal{N}_{k,i}+C_{1}E_{k,i}
\quad (k\geq1),
\qquad C_{1}=2s,
\]
with the same notation as in the proof of Theorem \ref{TfracPoincare}, the only
difference being that the profile on the node $k$ is now $r^{D_{k}/2}$ instead of
$r^{k}$. Consequently, the coefficients $\beta_{0},\ldots,\beta_{m+1}$ are admissible
provided that
\[
\begin{aligned}
&\sum_{j=0}^{m}\beta_{j}\leq C_{1},
\qquad
\beta_{m+1}\leq C_{1},
\qquad
\sum_{j=0}^{m+1}\beta_{j}\leq D_{m+1},\\
&\sum_{j<k}\beta_{j}\leq D_{k}
\quad\text{and}\quad
\sum_{j\geq k}\beta_{j}\leq C_{1}
\qquad (1\leq k\leq m).
\end{aligned}
\]
With our choice we have $\sum_{j<k}\beta_{j}=D_{k}$ for $1\leq k\leq m$ and
$\sum_{j=0}^{m+1}\beta_{j}=D_{m+1}$, so the third and the fourth conditions hold with
equality. Moreover
$\sum_{j=0}^{m}\beta_{j}=\min\{2s,D_{m+1}\}\leq C_{1}$ and
$\beta_{m+1}=(D_{m+1}-2s)^{+}\leq D_{m+1}-D_{1}\leq 2s=C_{1}$ by \eqref{mcond}, where we
used $D_{1}\leq 2s$. Finally, for $1\leq k\leq m$,
\[
\sum_{j\geq k}\beta_{j}=D_{m+1}-D_{k}\leq D_{m+1}-D_{1}\leq 2s=C_{1},
\]
again by \eqref{mcond}. This proves the weighted Poincar\'e chain for
$d\widetilde{\mu}_{s}$ and $d\widetilde{\nu}_{s}$. Transferring it to the CKN deficit by
Proposition \ref{Pfourierstab} with the parameters $(a,b)=(-s,-1)$ and with the
dilation invariant sets $\mathcal{K}_{j}=\mathcal{Y}_{j}^{\ast}$ and
$\mathcal{K}_{m+1}=\mathcal{Y}_{m}^{\ast}+\mathbb{C}|\xi|^{s}$, which divides all the
constants by $2$, and then undoing the Fourier transform by \eqref{dictA} with
$2\kappa=s$ and recalling \eqref{Hcal}, we obtain \eqref{frac2cascade}.
\end{proof}

\begin{remark}\label{Rspectral}
The two building blocks of Subsection \ref{Sblocks} in fact combine into an
\emph{identity}, and this explains the shape of the coefficients in Theorem
\ref{Tfraccascade} and in Theorem \ref{Tfrac2cascade}. Let us describe this on the line
$b=-1$, the line $b=0$ being identical with $D_{k}$ replaced by $2k$, $C_{1}$ by
$2(1+s)$, and $s$ by $\sigma=1+s$ in $\gamma_{k}$ and in the argument of the Laguerre
polynomial. Keeping the notation of Lemma \ref{Lfacts2} for $k \geq 1$, and noting that
on the radial node $k=0$ one has $D_{0}=0$ and the factorization is trivial, that is,
$w=\psi$, the expansion \eqref{laguerreexp} gives, for every $k \geq 0$ and up to the
common normalizing factor
$\frac{1}{2}\left(\frac{\sigma}{2}\right)^{\gamma_{k}-1}$, which depends on $k$ but
cancels in the ratio,
\[
\mathcal{D}_{k,i}=\sum_{n\geq0}\left(D_{k}+n\,C_{1}\right)a_{n}^{2}h_{n},
\qquad
\mathcal{N}_{k,i}=\sum_{n\geq0}a_{n}^{2}h_{n}.
\]
In other words, the quadratic form on the left-hand side of \eqref{frac2cascade} is
diagonalized, on the Fourier side, by the functions
\[
\psi_{k,n,i}(\xi):=|\xi|^{\frac{D_{k}}{2}}
L_{n}^{(\gamma_{k}-1)}\!\left(\frac{2|\xi|^{s}}{s}\right)
\phi_{k,i}\!\left(\frac{\xi}{|\xi|}\right),
\qquad
\gamma_{k}:=\frac{s+D_{k}+N}{s},
\]
and the corresponding eigenvalues are
\begin{equation}\label{spectrum}
\Lambda_{k,n}=D_{k}+n\,C_{1},
\qquad k \geq 0,\ n \geq 0,
\end{equation}
with $D_{0}=0$, so that $\Lambda_{0,0}=0$ corresponds to the constants. Thus the
spherical harmonic degree $k$ contributes $D_{k}$, while each radial excitation costs
the same amount $C_{1}$, on every node.

This gives a transparent description of the chain. Let
$\Lambda^{(0)}<\Lambda^{(1)}<\cdots$ denote the eigenvalues \eqref{spectrum} arranged in
increasing order, and assume that $D_{m}<C_{1}<D_{m+1}$. Then, by the second condition
in \eqref{mcond}, which gives $D_{m+1}\leq D_{1}+C_{1}=\Lambda_{1,1}$, the smallest
$m+3$ eigenvalues are
\[
0<D_{1}<\cdots<D_{m}<C_{1}<D_{m+1};
\]
without that condition, $\Lambda_{1,1}$ would come before $D_{m+1}$ and this list would
be wrong. Moreover, for $0 \leq j \leq m+1$, the span of the eigenfunctions whose
eigenvalue is at most $\Lambda^{(j)}$ is exactly the competitor set of the $j$-th layer
of the corresponding weighted Poincar\'e chain; here we use that the span of $1$ and
$|\xi|^{s}$ is the span of $\psi_{0,0}$ and $\psi_{0,1}$. Consequently
\[
\beta_{j}=\Lambda^{(j+1)}-\Lambda^{(j)},
\qquad 0 \leq j \leq m+1.
\]
In particular, all the constants of the weighted Poincar\'e chains, that is, of
\eqref{fracPoincare}, of its analogue on the line $b=-1$ and of
\eqref{improvedPoincare}, are sharp: testing with an eigenfunction of eigenvalue
$\Lambda^{(j+1)}$, which is charged exactly
$\beta_{0}+\cdots+\beta_{j}=\Lambda^{(j+1)}$ by the right-hand side, shows that none of
the $\beta_{j}$ can be increased when the others are kept fixed.

The same test applies to \eqref{frac2cascade} and to \eqref{fraccascade} for every
$j \neq m$, for which the eigenfunction with eigenvalue $\Lambda^{(j+1)}$ is non-radial.
For $j=m$ we have $\Lambda^{(m+1)}=C_{1}=\Lambda_{0,1}$, and the corresponding
eigenfunction is $\psi_{0,1}$; since $\psi_{0,0}$ and $\psi_{0,1}$ span the tangent space
of the family $\left\{c\,e^{-\lambda|\xi|^{\sigma}/\sigma}\right\}$, the extra parameter
$\lambda>0$ in the competitor sets of \eqref{frac2cascade} and \eqref{fraccascade}
absorbs that direction, both sides then vanish to higher order, and the test is empty.
We note that on the line $b=0$ one has $\alpha_{m}=2\{s\}=0$ whenever $s$ is an integer,
so that in that case nothing is lost.

Finally, if $D_{m+1}\leq C_{1}$, which can occur when the second condition in
\eqref{mcond} is the binding one, then $\beta_{m+1}=0$ and the chain stops after
$m+1$ layers. In that case $\beta_{0},\ldots,\beta_{m}$ are still the successive gaps, and are
therefore still sharp; here $\beta_{m}=D_{m+1}-D_{m}$ is governed by the non-radial
eigenfunction $\psi_{m+1,0}$, so that all the listed constants are covered by the test
above. The chain, however, no longer exhausts the eigenvalues below $C_{1}$.
\end{remark}

When $s=2$ we have $C_{1}=4$, and by Lemma \ref{Lordering} the largest $m$ satisfying
\eqref{mcond} is $m=2$, since $D_{2}<4<D_{3}$ and $D_{3}-D_{1}<4$. The coefficients of
Theorem \ref{Tfrac2cascade} then become
\[
\beta_{0}=D_{1},\qquad \beta_{1}=D_{2}-D_{1},\qquad
\beta_{2}=4-D_{2},\qquad \beta_{3}=D_{3}-4,
\]
which are exactly the four constants of \eqref{coupled}. Theorem \ref{Tcascade} is
therefore the case $s=2$ of Theorem \ref{Tfrac2cascade}, made explicit in the physical
variable and strengthened to its coupled form.

\subsection{Proof of Theorem \ref{Tfraccascade} and of Corollary \ref{CHUPcascade}}

\begin{proof}[Proof of Theorem \ref{Tfraccascade}]
We apply Proposition \ref{Pfourierstab} with the parameters $(a,b)=(-s,0)$, for which
$d\mu_{a,b}=d\mu_{s}$ and $d\nu_{a,b}=d\nu_{s}$, and with the sets
$\mathcal{K}_{j}=\mathcal{Y}_{j}$ for $0\leq j\leq m$ and
$\mathcal{K}_{m+1}=\mathcal{Y}_{m}+\mathbb{C}|\xi|^{\sigma}$, which are dilation
invariant. The hypothesis of Proposition \ref{Pfourierstab} is exactly
\eqref{fracPoincare}, and we conclude that
\[
\delta_{2}(\widehat{u})\ \geq\ \sum_{j=0}^{m-1}
\inf_{P \in \mathcal{Y}_{j},\,\lambda>0}\int_{\mathbb{R}^{N}}
\left|\widehat{u}-P\,e^{-\frac{\lambda|\xi|^{\sigma}}{\sigma}}\right|^{2}
|\xi|^{s-1}d\xi\ +\ \cdots,
\]
the remaining two terms being the ones of \eqref{fraccascade}. Undoing the Fourier
transform exactly as in the proof of Theorem \ref{Tfrac1}, that is, using
\eqref{dictA} with $2\kappa=s-1$, and recalling \eqref{Gcal}, we obtain
\eqref{fraccascade}. For the last assertion, we note that
$\sum_{j=0}^{m+1}\frac{\alpha_{j}}{2}=m+1=\lfloor s\rfloor+2$, and that
$\alpha_{m}=0$ when $s$ is an integer, in which case the remaining $\lfloor s\rfloor+2$
constants are all equal to $1$.
\end{proof}

\begin{proof}[Proof of Corollary \ref{CHUPcascade}]
We take $s=1$, so that $\sigma=2$, $m=2$, $\{s\}=0$, and hence $\alpha_{0}=\alpha_{1}=2$,
$\alpha_{2}=0$ and $\alpha_{3}=2$. Moreover $(-\Delta)^{\frac{s-1}{4}}$ is the identity,
and \eqref{frac1} is the classical Heisenberg Uncertainty Principle, as observed after Theorem \ref{Tfrac1}. It remains to identify the three families. By the Hecke identity, for
a solid harmonic $Y_{k}$ of degree $k$ one has
\[
\mathcal{F}^{-1}\left(Y_{k}(\xi)e^{-\frac{\lambda|\xi|^{2}}{2}}\right)(x)
=c_{k,\lambda}\,Y_{k}\!\left(x\right)e^{-\frac{2\pi^{2}|x|^{2}}{\lambda}}
\]
for an explicit constant $c_{k,\lambda}\neq0$; see
\cite[Chapter IV, Theorem 3.4]{SW71}.
Consequently $\mathcal{G}_{0}^{1}=\mathcal{Q}_{0}=E_{\mathrm{HUP}}$ and
$\mathcal{G}_{1}^{1}=\mathcal{Q}_{1}$, since $\mathcal{Y}_{1}$ consists of the
polynomials of degree at most $1$. Finally,
$\mathcal{F}^{-1}\left(|\xi|^{2}e^{-\lambda|\xi|^{2}/2}\right)
=-\frac{1}{4\pi^{2}}\Delta\,\mathcal{F}^{-1}
\left(e^{-\lambda|\xi|^{2}/2}\right)$ is a polynomial of degree $2$ times a Gaussian,
and $\mathcal{Y}_{2}$ together with $|\xi|^{2}$ spans all the polynomials of degree at
most $2$; hence $\mathcal{G}_{3}^{1}=\mathcal{Q}_{2}$. This gives \eqref{HUPcascade}.
\end{proof}

\section[The second order HUP: proofs of Theorem \ref{Tcascade} and Corollary \ref{Ccurlfree}]{The second order Heisenberg Uncertainty Principle: proofs of Theorem \ref{Tcascade} and Corollary \ref{Ccurlfree}}
\label{Spoincare}

Throughout this section we write $d\nu:=e^{-|x|^{2}}|x|^{2}\,dx$. We first record, on
the Fourier side, the case $s=2$ of Theorem \ref{Tfrac2cascade} and its coupled form;
we then invert the Fourier transform and prove Theorem \ref{Tcascade} and Corollary
\ref{Ccurlfree}.

\subsection{Two chains of weighted Poincar\'e inequalities}\label{Schains}

We begin with the case $s=2$ of Theorem \ref{Tfrac2cascade}, written for the measure
$d\nu$. We state it separately since $d\nu$ is one of the Gaussian type measures of
\cite{DLLN26}, and the resulting chain is of independent interest. We also refer to Lam and Lu \cite{LL26} for complete hierarchies of stability estimates for the Gaussian Poincar\'e inequality and for the $L^2$-Poincar\'e inequalities on Euclidean balls.

\begin{theorem}\label{TimprovedPoincare}
Let $N \geq 2$ and set $d\nu:=e^{-|x|^{2}}|x|^{2}\,dx$. For every
$v \in C_{0}^{\infty}(\mathbb{R}^{N})$, real- or complex-valued,
\begin{align}\label{improvedPoincare}
\int_{\mathbb{R}^{N}} |\nabla v|^{2}\,d\nu
&\geq D_{1}\inf_{c}\int_{\mathbb{R}^{N}} |v-c|^{2}\,d\nu \nonumber\\
&\quad+(D_{2}-D_{1})\inf_{c,\mathbf{a}}\int_{\mathbb{R}^{N}}
\left|v-c-|x|^{\frac{D_{1}}{2}-1}\mathbf{a}\cdot x\right|^{2} d\nu \nonumber\\
&\quad+(4-D_{2})\inf_{c,\mathbf{a},\mathbf{b}}\int_{\mathbb{R}^{N}}
\left|v-c-|x|^{\frac{D_{1}}{2}-1}\mathbf{a}\cdot x
-|x|^{\frac{D_{2}}{2}}\mathbf{b}\cdot\mathbf{\Phi}_{2}\right|^{2} d\nu \\
&\quad+(D_{3}-4)\inf_{c,d,\mathbf{a},\mathbf{b}}\int_{\mathbb{R}^{N}}
\left|v-c-d|x|^{2}-|x|^{\frac{D_{1}}{2}-1}\mathbf{a}\cdot x
-|x|^{\frac{D_{2}}{2}}\mathbf{b}\cdot\mathbf{\Phi}_{2}\right|^{2} d\nu, \nonumber
\end{align}
where the infima are taken over $c,d \in \mathbb{R}$ (resp.\ $\mathbb{C}$),
$\mathbf{a} \in \mathbb{R}^{N}$ (resp.\ $\mathbb{C}^{N}$), and
$\mathbf{b} \in \mathbb{R}^{N_{2}}$ (resp.\ $\mathbb{C}^{N_{2}}$).
\end{theorem}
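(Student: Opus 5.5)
The plan is to read \eqref{improvedPoincare} as the case $s=2$ of the weighted Poincar\'e chain established in the proof of Theorem \ref{Tfrac2cascade}. For $s=2$ the two measures attached to $(a,b)=(-2,-1)$ coincide:
\[
d\widetilde{\mu}_{2}=e^{-|\xi|^{2}}|\xi|^{2}d\xi=d\widetilde{\nu}_{2}=d\nu .
\]
So the left-hand side of \eqref{improvedPoincare} is exactly the quadratic form treated there, and the four weights on the right are the $\beta_j$ from that proof with $C_1=4$. I would first reduce to real-valued $v$: in the complex case, apply the real statement to $\mathrm{Re}\,v$ and $\mathrm{Im}\,v$ and add, since each infimum splits over real and imaginary parts.

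Next I decompose $v=\sum_{k,i}v_{k,i}(r)\phi_{k,i}(\theta)$ as in \eqref{sphdecomp}. The competitor profiles are diagonal in this decomposition. Indeed $|x|^{\frac{D_1}{2}-1}\mathbf{a}\cdot x=|x|^{D_1/2}\,\mathbf{a}\cdot\theta$, and $\theta_i$ is a multiple of $\phi_{1,i}$, so this term is a combination of $r^{D_1/2}\phi_{1,i}$. Likewise the $\mathbf b$-term uses $r^{D_2/2}\phi_{2,i}$, and $c+d|x|^2$ lives on the radial node only. Hence every infimum in \eqref{improvedPoincare} splits node by node, exactly as in the proof of Theorem \ref{TfracPoincare}, into sums of three kinds of terms:
\begin{itemize}
\item $A_0$ and $B_0$ on the radial node, which are the quantities $\mathcal A$ and $\mathcal B$ of Lemma \ref{Llaguerre} for $v_{0,1}$ with $\sigma=2$;
\item $E_{k,i}=\inf_d\int_0^\infty|v_{k,i}-dr^{D_k/2}|^2e^{-r^2}r^{N+1}dr$ on the nodes $k=1,2$;
\item the full norms $\mathcal N_{k,i}$ on all nodes not yet matched.
\end{itemize}

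The node estimates then come from the two building blocks.
\begin{itemize}
\item On the radial node, \eqref{laguerreP} with $\sigma=2$ and $M=N+2$ gives $\mathcal D_{0,1}\ge 4A_0+4B_0$.
\item On a node $k\ge1$, Lemma \ref{Lfacts2} with $s=2$, followed by \eqref{laguerre2} with $M=D_k+N+2$, gives $\mathcal D_{k,i}\ge D_k\mathcal N_{k,i}+4E_{k,i}$. Since $D_k$ is increasing in $k$, this yields $\mathcal D_{k,i}\ge D_3\mathcal N_{k,i}$ for all $k\ge3$.
\end{itemize}

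It remains to check the admissibility conditions node by node, with
\[
\beta=(D_1,\;D_2-D_1,\;4-D_2,\;D_3-4).
\]
\begin{itemize}
\item Radial node: $\beta_0+\beta_1+\beta_2=4$, and $\beta_3=D_3-4\le4$.
\item Node $k=1$: $\beta_0=D_1$ charges $\mathcal N$, while $\beta_1+\beta_2+\beta_3=D_3-D_1$ charges $E$. This requires $D_3-D_1\le4$, which is \eqref{ordering}.
\item Node $k=2$: $\beta_0+\beta_1=D_2$ charges $\mathcal N$, and $\beta_2+\beta_3=D_3-D_2\le4$.
\item Nodes $k\ge3$: the total $D_3$ charges $\mathcal N$, which matches the bound above.
\end{itemize}
Summing over the nodes gives \eqref{improvedPoincare}.

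The step I expect to need care is justifying Lemmas \ref{Lfacts2} and \ref{Llaguerre} for the components $v_{k,i}$. These are not compactly supported in $(0,\infty)$, since $v\in C_0^\infty(\mathbb R^N)$ need not vanish at the origin. I would handle this by checking the boundary term of the integration by parts in Lemma \ref{Lfacts2} directly. For smooth $v$ we have $v_{k,i}=O(r^k)$ near $0$, so $w=r^{-D_k/2}v_{k,i}$ and the boundary term $r^{D_k+N}|w|^2=r^{N}|v_{k,i}|^2$ tends to $0$; at infinity there is nothing to check because of the compact support. Alternatively, I would invoke the zero-capacity density argument at the origin described after Remark \ref{Ronlytwolines}.
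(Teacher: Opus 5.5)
Your proposal is correct and is essentially the paper's proof. The paper also obtains \eqref{improvedPoincare} as the case $s=2$ of the $b=-1$ chain of Subsection~\ref{Sbm1}, where $d\widetilde{\mu}_{2}=d\widetilde{\nu}_{2}=d\nu$, $C_{1}=4$ and $m=2$, and your node-by-node admissibility check is exactly the one in the proof of Theorem~\ref{Tfrac2cascade} specialized to these values; your check that the boundary term $r^{N}|v_{k,i}|^{2}$ vanishes at the origin (needed to apply Lemma~\ref{Lfacts2} to components not compactly supported in $(0,\infty)$) is a small point the paper leaves implicit.
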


Let us clarify the relation between Theorem \ref{TimprovedPoincare} and \cite{DLLN26}.
When $(a,b)=(-2,-1)$, we have $C_{1}:=2(1+b-a)=4>D_{1}$ and $D_{2}<4$ by Lemma
\ref{Lordering}, and hence $\min\{C_{1}-D_{1},\,D_{2}-D_{1}\}=D_{2}-D_{1}$. Therefore,
the first two lines of \eqref{improvedPoincare} recover the corresponding statement in
\cite[Section 2]{DLLN26}. The third and the fourth layers are new, and they are the
reason why \eqref{improvedPoincare} is a genuine chain rather than a single
improvement: they are obtained by retaining, in the spherical harmonics decomposition,
the node $k=2$ and the second radial Laguerre mode, and by redistributing the two
building blocks of \cite{DLLN26}, namely the sharp radial Poincar\'e inequality and the
exact remainder on each non-radial node, over the four nodes simultaneously. Each of
the four constants is optimal on the node that governs it; see the end of this
subsection. Let us finally observe that the chain closes after exactly
four layers. Indeed, the radial budget is already exhausted at the third layer, since
$\beta_{0}+\beta_{1}+\beta_{2}=D_{1}+(D_{2}-D_{1})+(4-D_{2})=4=C_{1}$, and a fifth
layer, that is, the node $k=3$, would in addition require $D_{4}-D_{1}\leq C_{1}=4$,
which fails for every $N \geq 2$ by \eqref{ordering2}. In other words, the four terms of
\eqref{improvedPoincare} are all that the measure $e^{-|x|^{2}}|x|^{2}dx$ can give in
this way.

\begin{proof}
This is the case $s=2$ of the chain established in Subsection \ref{Sbm1}. Indeed, for
$(a,b)=(-2,-1)$ we have $\sigma=1+b-a=2$, and therefore
\[
d\widetilde{\mu}_{2}=e^{-|\xi|^{2}}|\xi|^{2}d\xi=d\nu,
\qquad
d\widetilde{\nu}_{2}=e^{-|\xi|^{2}}|\xi|^{2}d\xi=d\nu,
\]
so that the two measures of Subsection \ref{Sbm1} coincide with $d\nu$. Moreover
$C_{1}=2s=4$, and, as computed at the end of Subsection \ref{Sbm1}, the largest integer $m$
satisfying \eqref{mcond} is $m=2$ and the coefficients $\beta_{j}$ of Theorem
\ref{Tfrac2cascade} are $D_{1}$, $D_{2}-D_{1}$, $4-D_{2}$ and $D_{3}-4$.
Finally, the competitor sets are identified as follows. Since $D_{1}/2$ and $D_{2}/2$
are the exponents attached to the nodes $k=1$ and $k=2$, the solid profiles
$|x|^{D_{1}/2}\phi_{1,i}$ and $|x|^{D_{2}/2}\phi_{2,i}$ are, up to constants,
$|x|^{\frac{D_{1}}{2}-1}x_{i}$ and $|x|^{\frac{D_{2}}{2}}\phi_{2,i}$, so that
$\mathcal{Y}_{0}^{\ast}=\{c\}$,
$\mathcal{Y}_{1}^{\ast}=\{c+|x|^{\frac{D_{1}}{2}-1}\mathbf{a}\cdot x\}$ and
$\mathcal{Y}_{2}^{\ast}=\{c+|x|^{\frac{D_{1}}{2}-1}\mathbf{a}\cdot x
+|x|^{\frac{D_{2}}{2}}\mathbf{b}\cdot\mathbf{\Phi}_{2}\}$, while the last set adds
$d|x|^{\sigma}=d|x|^{2}$. This is exactly \eqref{improvedPoincare}.
\end{proof}

On each node the constant that we use is optimal; this is the case $s=2$ of Remark
\ref{Rspectral}. Indeed, on the radial node it is
$C_{1}=4$, which is sharp by Theorem \ref{TD}, that is, by the first eigenvalue of the
Laguerre operator in Lemma \ref{Llaguerre}; and on the node $k \geq 1$ it is $D_{k}$,
the equality in Lemma \ref{Lfacts2} holding exactly when $\psi=d\,r^{D_{k}/2}$. The
limitation is not in the constants, which are the successive spectral gaps of Remark
\ref{Rspectral}, but in the fact that the chain closes after four layers, as measured by
the two inequalities $D_{3}-4<D_{1}$ and $D_{3}-D_{2}<4$, both of which follow from
\eqref{ordering}.

\subsection{The coupled form of the chain}

In Theorem \ref{TimprovedPoincare}, each of the four remainder terms is measured with
its own choice of the parameters $c,d,\mathbf{a},\mathbf{b}$. We now
show that one single choice of the parameters can realize all the four infima at the
same time, provided that the two radial competitors are taken orthogonal to each other.
Throughout this section, $L_{k}^{N/2}$ denotes the generalized Laguerre polynomial of
parameter $N/2$, and we recall that
\begin{equation}\label{laguerre}
\int_{0}^{\infty}L_{j}^{\alpha}L_{k}^{\alpha}e^{-t}t^{\alpha}\,dt
=\delta_{jk}\frac{\Gamma(k+\alpha+1)}{k!},
\qquad
\frac{d}{dt}L_{k}^{\alpha}=-L_{k-1}^{\alpha+1},
\qquad
L_{1}^{\alpha}(t)=\alpha+1-t.
\end{equation}

\begin{theorem}\label{TcoupledPoincare}
Let $N \geq 2$ and $d\nu=e^{-|x|^{2}}|x|^{2}\,dx$. For every
$v \in C_{0}^{\infty}(\mathbb{R}^{N})$, real- or complex-valued, one has
\begin{align}\label{coupledPoincare}
\int_{\mathbb{R}^{N}}|\nabla v|^{2}\,d\nu
\geq \inf_{c,d,\mathbf{a},\mathbf{b}}\Bigg[\,
&D_{1}\int_{\mathbb{R}^{N}}|v-c|^{2}\,d\nu
+(D_{2}-D_{1})\int_{\mathbb{R}^{N}}
\left|v-c-|x|^{\frac{D_{1}}{2}-1}\mathbf{a}\cdot x\right|^{2}d\nu \nonumber\\
&+(4-D_{2})\int_{\mathbb{R}^{N}}
\left|v-c-|x|^{\frac{D_{1}}{2}-1}\mathbf{a}\cdot x
-|x|^{\frac{D_{2}}{2}}\mathbf{b}\cdot\mathbf{\Phi}_{2}\right|^{2}d\nu \\
&+(D_{3}-4)\int_{\mathbb{R}^{N}}
\left|v-c-d\,L_{1}^{N/2}\!\left(|x|^{2}\right) \right. \nonumber\\
&\hskip 2.2cm \left.
-|x|^{\frac{D_{1}}{2}-1}\mathbf{a}\cdot x
-|x|^{\frac{D_{2}}{2}}\mathbf{b}\cdot\mathbf{\Phi}_{2}\right|^{2}d\nu\Bigg].
\nonumber
\end{align}
Moreover, the right-hand side of \eqref{coupledPoincare} is equal to the right-hand
side of \eqref{improvedPoincare}. In other words, there is no loss at all when the
parameters are coupled in this way.
\end{theorem}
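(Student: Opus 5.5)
The plan is to prove Theorem \ref{TcoupledPoincare} by showing that the coupled infimum on its right-hand side equals the sum of the four separate infima of \eqref{improvedPoincare}. The inequality then follows directly from Theorem \ref{TimprovedPoincare}.

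One direction is trivial. For any fixed $(c,d,\mathbf{a},\mathbf{b})$, each of the four integrals is at least the corresponding separate infimum. Moreover, $c+d\,L_{1}^{N/2}(|x|^{2})=(c+d(\tfrac N2+1))-d|x|^{2}$ runs over exactly the same set as $c+d|x|^{2}$. Hence the coupled right-hand side is at least the right-hand side of \eqref{improvedPoincare}. It remains to exhibit one choice of parameters that attains all four separate infima simultaneously. Combined with Theorem \ref{TimprovedPoincare}, this gives both \eqref{coupledPoincare} and the asserted equality of the two right-hand sides.

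To find that common choice, decompose $v=\sum_{k,i}v_{k,i}(r)\phi_{k,i}(\theta)$ as in \eqref{sphdecomp}. Then $\int|v-F|^{2}d\nu=\sum_{k,i}\int_{0}^{\infty}|v_{k,i}-F_{k,i}|^{2}e^{-r^{2}}r^{N+1}dr$. Every competitor in the four layers has components only on the nodes $k=0,1,2$:
\begin{itemize}
\item on the node $k=0$, the component is $c$, or $c+d\,L_{1}^{N/2}(r^{2})$;
\item on the node $k=1$, it is a multiple of $r^{D_{1}/2}$, with coefficient given by a component of $\mathbf{a}$ up to a fixed factor;
\item on the node $k=2$, it is a multiple of $r^{D_{2}/2}$, with coefficient $b_{i}$.
\end{itemize}
Each layer's infimum therefore splits node by node into independent one-dimensional least-squares problems in $L^{2}(e^{-r^{2}}r^{N+1}dr)$. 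The optimal coefficient on nodes $k=1,2$ is the orthogonal projection of $v_{k,i}$ onto $r^{D_{k}/2}$. This projection is the same in every layer where that node is fitted, and the node simply contributes $\mathcal{N}_{k,i}$ in the layers where it is not fitted. So a single choice of $\mathbf{a}$ and $\mathbf{b}$ serves all layers.

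The main point is the radial node. After the substitution $t=r^{2}$, the measure $e^{-r^{2}}r^{N+1}dr$ becomes $\tfrac12 t^{N/2}e^{-t}dt$. By \eqref{laguerre}, the functions $L_{0}^{N/2}=1$ and $L_{1}^{N/2}(t)$ are orthogonal in this space. Write $v_{0,1}=\sum_{n}a_{n}L_{n}^{N/2}(r^{2})$. The best constant in layers $0,1,2$ is the projection onto $1$, namely $c=a_{0}$. The best fit in layer $3$ is the projection onto $\mathrm{span}\{1,L_{1}^{N/2}\}$, which by orthogonality is $a_{0}+a_{1}L_{1}^{N/2}$. It therefore uses the same $c=a_{0}$ together with $d=a_{1}$.

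This orthogonality is the step I expect to carry the argument. It is exactly why the naive parametrization $c+d|x|^{2}$ fails: the optimal constant term there would differ between layers $2$ and $3$. With $c=a_{0}$, $d=a_{1}$ and the projection coefficients for $\mathbf{a}$ and $\mathbf{b}$, all four bracketed integrals equal their separate infima. The complex-valued case is handled in the same way, since all projections are complex-linear.
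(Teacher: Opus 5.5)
Your proposal is correct and follows essentially the same route as the paper: reduce to the equality of the two right-hand sides via Theorem~\ref{TimprovedPoincare}, then use the spherical harmonic decomposition and the orthogonality of $L_{0}^{N/2}$ and $L_{1}^{N/2}$ in $L^{2}(t^{N/2}e^{-t}dt)$. This shows that $c=a_{0}$, $d=a_{1}$ and the node-wise projection coefficients minimize all four integrals simultaneously. Your explicit remark that $c+d\,L_{1}^{N/2}(|x|^{2})$ and $c+d|x|^{2}$ range over the same set is a small detail the paper leaves implicit.
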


\begin{proof}
Since the infimum of a sum is always at least the sum of the infima,
\eqref{coupledPoincare} is stronger than \eqref{improvedPoincare}. Hence it is enough
to prove that the two right-hand sides are equal, and then \eqref{coupledPoincare}
follows from Theorem \ref{TimprovedPoincare}.

We decompose $v$ as in \eqref{sphdecomp} and we keep the notation
$\mathcal{N}_{k,i}, A_{0}, B_{0}, E_{k,i}$ from the proof of Theorem
\ref{TfracPoincare}, with the profile $r^{D_{k}/2}$ on the node $k$ in place of
$r^{k}$. On the radial node, we write $t=|x|^{2}$,
$\psi(t):=v_{0,1}(\sqrt{t})$ and $\psi=\sum_{k \geq 0}a_{k}L_{k}^{N/2}$. Then, by
\eqref{laguerre}, and with
$h_{k}:=\frac{|\mathbb{S}^{N-1}|}{2}\frac{\Gamma\left(k+\frac{N}{2}+1\right)}{k!}$, we
have
\begin{equation}\label{radialexpansion}
\int_{\mathbb{R}^{N}}\left|v_{0,1}-c-d\,L_{1}^{N/2}(|x|^{2})\right|^{2}d\nu
=|a_{0}-c|^{2}h_{0}+|a_{1}-d|^{2}h_{1}+\sum_{k \geq 2}|a_{k}|^{2}h_{k},
\end{equation}
and, in particular,
$\int_{\mathbb{R}^{N}}|v_{0,1}-c|^{2}d\nu
=|a_{0}-c|^{2}h_{0}+\sum_{k \geq 1}|a_{k}|^{2}h_{k}$. Similarly, on the node
$k \geq 1$, we expand $v_{k,i}=\sum_{j}b_{j}^{(k,i)}e_{j}$ in an orthogonal basis of
$L^{2}\left((0,\infty),e^{-r^{2}}r^{N+1}\,dr\right)$ whose first element is
$e_{0}=r^{D_{k}/2}$, and we get
\begin{equation}\label{nodeexpansion}
\int_{0}^{\infty}\left|v_{k,i}-\kappa\,r^{D_{k}/2}\right|^{2}
e^{-r^{2}}r^{N+1}\,dr
=\left|b_{0}^{(k,i)}-\kappa\right|^{2}\|e_{0}\|^{2}
+\sum_{j \geq 1}\left|b_{j}^{(k,i)}\right|^{2}\|e_{j}\|^{2}.
\end{equation}

Now we evaluate the bracket in \eqref{coupledPoincare} at a fixed
$(c,d,\mathbf{a},\mathbf{b})$. By \eqref{radialexpansion} and \eqref{nodeexpansion},
each of the four integrals is a sum over the nodes of terms of the form
\[
\left|\text{(coefficient of $v$)}-\text{(parameter)}\right|^{2}
\times\text{(weight)}
\ +\ \text{(terms free of the parameters)}.
\]
The key point is that each parameter is always paired with one and the same expansion
coefficient of $v$. Indeed, the parameter $c$ is always paired with $a_{0}$. This is
exactly where the orthogonality $L_{0}^{N/2} \perp L_{1}^{N/2}$ is used: if we had
written the last competitor as $c+d|x|^{2}$, then $c$ would be paired with $a_{0}$ in
the first three integrals, but with $a_{0}+a_{1}\left(\frac{N}{2}+1\right)$ in the
fourth one. Similarly, the parameter $d$ is always paired with $a_{1}$, the $j$-th
entry of $\mathbf{a}$ is always paired with $b_{0}^{(1,j)}$, and the $i$-th entry of
$\mathbf{b}$ is always paired with $b_{0}^{(2,i)}$. Consequently, as a function of
$(c,d,\mathbf{a},\mathbf{b})$, the bracket is a sum of nonnegative quadratic terms, and
all of them attain their minimum at the same point, namely
\[
c=a_{0},\qquad d=a_{1},\qquad
(\mathbf{a})_{j}=b_{0}^{(1,j)},\qquad
(\mathbf{b})_{i}=b_{0}^{(2,i)}.
\]
Moreover, this common minimizer realizes each of the four infima in
\eqref{improvedPoincare} separately. Therefore the two right-hand sides are equal.
\end{proof}

\begin{remark}\label{Rnaive}
The orthogonal parametrization is essential here. Suppose that we keep the naive family
$c+d|x|^{2}$ and that we couple its constant term with the parameter $c$ of the first
integral, that is, suppose that we ask for
\begin{equation}\label{claim}
\int_{\mathbb{R}^{N}}|\nabla v_{0,1}|^{2}d\nu
\geq 4\inf_{c,d \in \mathbb{R}}
\left(\int_{\mathbb{R}^{N}}|v_{0,1}-c|^{2}d\nu
+\int_{\mathbb{R}^{N}}\left|v_{0,1}-c-d|x|^{2}\right|^{2}d\nu\right).
\end{equation}
Then \eqref{claim} is false for every $N \geq 2$. Indeed, with the notation above,
\eqref{laguerre} gives
\[
\int_{\mathbb{R}^{N}}|\nabla v_{0,1}|^{2}d\nu
=2|\mathbb{S}^{N-1}|\sum_{k \geq 1}
\frac{|a_{k}|^{2}\,\Gamma\left(k+\frac{N}{2}+1\right)}{(k-1)!},
\]
whence
\begin{equation}\label{gap}
\int_{\mathbb{R}^{N}}|\nabla v_{0,1}|^{2}d\nu-4A_{0}-4B_{0}
=4\sum_{k \geq 3}(k-2)|a_{k}|^{2}h_{k}.
\end{equation}
On the other hand, we write $c+dt=c'L_{0}^{N/2}+d'L_{1}^{N/2}$, so that
$c=c'+d'm_{1}$ with $m_{1}=\frac{N}{2}+1$. Then, using $h_{1}=m_{1}h_{0}$, a direct
minimization of
\[
(c',d') \longmapsto
|a_{0}-c'-d'm_{1}|^{2}h_{0}+|a_{0}-c'|^{2}h_{0}+|a_{1}-d'|^{2}h_{1}
\]
gives the value $h_{0}\frac{m_{1}^{2}}{m_{1}+2}a_{1}^{2}$, which is attained at
$a_{0}-c'=a_{1}-d'=\frac{a_{1}m_{1}}{m_{1}+2}$. Therefore,
\begin{equation}\label{coupledgap}
4\inf_{c,d}\left(\int_{\mathbb{R}^{N}}|v_{0,1}-c|^{2}d\nu
+\int_{\mathbb{R}^{N}}\left|v_{0,1}-c-d|x|^{2}\right|^{2}d\nu\right)
-4A_{0}-4B_{0}
=4h_{0}\frac{m_{1}^{2}}{m_{1}+2}a_{1}^{2}.
\end{equation}
Now, if we choose $a_{k}=0$ for all $k \geq 3$ and $a_{1} \neq 0$, then the left-hand
side of \eqref{gap} is zero, while the right-hand side of \eqref{coupledgap} is
strictly positive. Hence \eqref{claim} cannot hold. We note that the quantity
$4h_{0}m_{1}^{2}a_{1}^{2}/(m_{1}+2)$ measures exactly the non-orthogonality of $1$ and
$|x|^{2}$ in $L^{2}(d\nu)$, and this is precisely what is removed by the substitution
$|x|^{2} \rightsquigarrow L_{1}^{N/2}(|x|^{2})$.
\end{remark}

\subsection{Inverting the Fourier transform}\label{Sinversion}

We now compute the inverse Fourier transforms of the four competitor families that
appear in Theorems \ref{TimprovedPoincare} and \ref{TcoupledPoincare}, that is, we make
Theorem \ref{Tfrac2cascade} explicit in the case $s=2$. For this purpose, we will use
the
Hecke--Bochner formula; see, for instance, \cite[Chapter IV, Theorem 3.10]{SW71}. It
says that if $P$ is a solid harmonic of degree $m$ on $\mathbb{R}^{N}$ and $f$ is
radial, then
\begin{equation}\label{heckebochner}
\mathcal{F}\left(Pf\right)(x)=i^{-m}P(x)\,H(|x|),
\qquad
H(\rho)=2\pi\rho^{-\left(\frac{N}{2}+m-1\right)}\int_{0}^{\infty}f(t)
J_{\frac{N}{2}+m-1}(2\pi\rho t)\,t^{\frac{N}{2}+m}\,dt,
\end{equation}
where $J_{\nu}$ is the Bessel function of the first kind.

\begin{lemma}\label{LinvFourier}
Let $\lambda>0$ and put $\beta:=2\pi^{2}/\lambda$. Then the following hold.
\begin{enumerate}
\item[(i)] $\displaystyle
\mathcal{F}^{-1}\left(e^{-\frac{\lambda|\xi|^{2}}{2}}\right)(x)
=\left(\frac{2\pi}{\lambda}\right)^{N/2}e^{-\beta|x|^{2}}$.

\item[(ii)] $\displaystyle
\mathcal{F}^{-1}\left(|\xi|^{2}e^{-\frac{\lambda|\xi|^{2}}{2}}\right)(x)
=\left(\frac{2\pi}{\lambda}\right)^{N/2}
\left(\frac{N}{\lambda}-\frac{4\pi^{2}}{\lambda^{2}}|x|^{2}\right)e^{-\beta|x|^{2}}$.

\item[(iii)] For $\mathbf{a} \in \mathbb{C}^{N}$,
\[
\mathcal{F}^{-1}\left((\mathbf{a}\cdot\xi)\,|\xi|^{\frac{D_{1}}{2}-1}
e^{-\frac{\lambda|\xi|^{2}}{2}}\right)(x)
=i\,A(\lambda)\,
{}_{1}F_{1}\!\left(\frac{D_{1}}{4}+\frac{N+1}{2},\ \frac{N+2}{2};\
-\beta|x|^{2}\right)\mathbf{a}\cdot x,
\]
where $\displaystyle
A(\lambda)=\left(\frac{2}{\lambda}\right)^{\frac{D_{1}}{4}+\frac{N+1}{2}}
\pi^{\frac{N+2}{2}}
\frac{\Gamma\left(\frac{D_{1}}{4}+\frac{N+1}{2}\right)}
{\Gamma\left(\frac{N}{2}+1\right)}$.

\item[(iv)] For $\mathbf{b} \in \mathbb{C}^{N_{2}}$,
\[
\begin{aligned}
&\mathcal{F}^{-1}\left(\left(\mathbf{b}\cdot\mathbf{\Phi}_{2}\right)
|\xi|^{\frac{D_{2}}{2}}e^{-\frac{\lambda|\xi|^{2}}{2}}\right)(x)\\
&\qquad=-B(\lambda)\,
{}_{1}F_{1}\!\left(\frac{D_{2}}{4}+\frac{N}{2}+1,\ \frac{N}{2}+2;\
-\beta|x|^{2}\right)|x|^{2}\,\mathbf{b}\cdot\mathbf{\Phi}_{2}(x),
\end{aligned}
\]
where $\displaystyle
B(\lambda)=\left(\frac{2}{\lambda}\right)^{\frac{D_{2}}{4}+\frac{N}{2}+1}
\pi^{\frac{N}{2}+2}
\frac{\Gamma\left(\frac{D_{2}}{4}+\frac{N}{2}+1\right)}
{\Gamma\left(\frac{N}{2}+2\right)}$.
\end{enumerate}
\end{lemma}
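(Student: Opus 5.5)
Parts (i) and (ii) are handled directly, while (iii) and (iv) go through the Hecke--Bochner formula \eqref{heckebochner} followed by a standard Bessel--Gaussian integral.

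For (i), we use the Gaussian integral $\int e^{-\pi a|\xi|^{2}}e^{2\pi i x\cdot\xi}d\xi=a^{-N/2}e^{-\pi|x|^{2}/a}$ with $\pi a=\lambda/2$, that is, $a=\lambda/(2\pi)$. This gives $(2\pi/\lambda)^{N/2}e^{-2\pi^{2}|x|^{2}/\lambda}$.

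For (ii), we use $\mathcal{F}^{-1}(|\xi|^{2}g)=-\frac{1}{4\pi^{2}}\Delta\mathcal{F}^{-1}(g)$ and apply $\Delta$ to the Gaussian $e^{-\beta|x|^{2}}$:
\[
\Delta e^{-\beta|x|^{2}}=(4\beta^{2}|x|^{2}-2N\beta)e^{-\beta|x|^{2}} .
\]
With $\beta=2\pi^{2}/\lambda$, multiplying by $-1/(4\pi^{2})$ gives $N/\lambda-4\pi^{2}|x|^{2}/\lambda^{2}$, which is (ii).

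For (iii) and (iv), the inverse transform of $Pf$ is obtained from \eqref{heckebochner} by replacing $i^{-m}$ with $i^{m}$, since $\mathcal{F}^{-1}g(x)=\mathcal{F}g(-x)$ and $P(-x)=(-1)^{m}P(x)$. This accounts for the factor $i$ when $m=1$ and the factor $i^{2}=-1$ when $m=2$.

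In (iii), the solid harmonic is $P(\xi)=\mathbf{a}\cdot\xi$ with $m=1$. The radial factor is $f(t)=t^{D_{1}/2-1}e^{-\lambda t^{2}/2}$ and $\nu=N/2$. We need to evaluate
\[
H(\rho)=2\pi\rho^{-N/2}\int_{0}^{\infty}t^{\frac{D_{1}}{2}-1+\frac N2+1}e^{-\lambda t^{2}/2}J_{N/2}(2\pi\rho t)\,dt .
\]
For this we use the classical Weber--Sonine type formula
\[
\int_{0}^{\infty}t^{\mu-1}e^{-pt^{2}}J_{\nu}(ct)\,dt=\frac{\Gamma(\frac{\mu+\nu}{2})(c/2)^{\nu}}{2p^{(\mu+\nu)/2}\Gamma(\nu+1)}\,{}_{1}F_{1}\!\Big(\tfrac{\mu+\nu}{2},\nu+1;-\tfrac{c^{2}}{4p}\Big),
\]
valid for $\mathrm{Re}(\mu+\nu)>0$. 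Here $p=\lambda/2$, $c=2\pi\rho$, $\nu=N/2$ and $\mu=D_{1}/2+N/2+1$. Then $\frac{\mu+\nu}{2}=\frac{D_{1}}{4}+\frac{N+1}{2}$ and $\nu+1=\frac{N+2}{2}$, while $c^{2}/(4p)=2\pi^{2}\rho^{2}/\lambda=\beta\rho^{2}$. This reproduces exactly the Kummer function of $\Theta_{1}$.

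In (iv), we first write $\phi_{2,i}(\xi/|\xi|)|\xi|^{D_{2}/2}=Y_{2,i}(\xi)|\xi|^{D_{2}/2-2}$, where $Y_{2,i}=|\xi|^{2}\phi_{2,i}$ is a solid harmonic with $m=2$. Then $\nu=N/2+1$, $f(t)=t^{D_{2}/2-2}e^{-\lambda t^{2}/2}$, $\mu=D_{2}/2+N/2$, and $\frac{\mu+\nu}{2}=\frac{D_{2}}{4}+\frac N2+\frac12$. This last value does \emph{not} match the stated first parameter $\frac{D_{2}}{4}+\frac{N}{2}+1$, so this is a point to check carefully. My count of the power of $t$ is: the Hecke--Bochner weight $t^{N/2+m}=t^{N/2+2}$, times $f$, gives $t^{D_{2}/2+N/2}$, hence $\mu=D_{2}/2+N/2+1$. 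Then $\frac{\mu+\nu}{2}=\frac{D_{2}}{4}+\frac{N}{2}+1$, which agrees with the statement.

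The same recount applies to (iii). There the weight is $t^{N/2+1}$ times $t^{D_{1}/2-1}$, giving $t^{D_{1}/2+N/2}$ and $\mu=D_{1}/2+N/2+1$, which is consistent with what I used above.

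The only remaining work is bookkeeping of the constants. After the $\rho^{-\nu}$ and $(c/2)^{\nu}=(\pi\rho)^{\nu}$ factors cancel, one is left with
\[
2\pi\cdot\pi^{\nu}\,\Gamma\!\Big(\tfrac{\mu+\nu}{2}\Big)\Big/\Big(2(\lambda/2)^{(\mu+\nu)/2}\Gamma(\nu+1)\Big)=\pi^{\nu+1}(2/\lambda)^{(\mu+\nu)/2}\,\Gamma\!\Big(\tfrac{\mu+\nu}{2}\Big)\Big/\Gamma(\nu+1).
\]
This is exactly $A(\lambda)$ with $\nu+1=\frac{N+2}{2}$, and exactly $B(\lambda)$ with $\nu+1=\frac N2+2$.

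The main obstacle is getting the exponents right in the Hecke--Bochner/Bessel integral and the $i^{\pm m}$ sign convention for $\mathcal{F}^{-1}$. Convergence is not an issue, since $\mu+\nu>0$ and there is Gaussian decay.
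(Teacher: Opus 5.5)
Your proposal is correct and follows the paper's route: (i) from the Gaussian integral, (ii) from $\mathcal{F}^{-1}(|\xi|^{2}g)=-\frac{1}{4\pi^{2}}\Delta\mathcal{F}^{-1}g$, and (iii)--(iv) from the Hecke--Bochner formula with the factor $i^{m}$ coming from $\mathcal{F}^{-1}g(x)=\mathcal{F}g(-x)$. The one difference is that you quote the closed-form Bessel--Gaussian integral, whereas the paper expands $J_{\nu}$ in its power series and integrates term by term, which is exactly how that closed form is derived; in a write-up, drop the false start in (iv) and state directly that $\mu=\frac{D_{2}}{2}+\frac{N}{2}+1$.
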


\begin{proof}
Part (i) is the classical Gaussian computation, and (ii) follows from (i) together
with $\mathcal{F}^{-1}(|\xi|^{2}g)
=-\frac{1}{4\pi^{2}}\Delta_{x}\mathcal{F}^{-1}(g)$.

For (iii), the function $P(\xi)=\mathbf{a}\cdot\xi$ is a solid harmonic of degree
$m=1$, and $f(t)=t^{\frac{D_{1}}{2}-1}e^{-\lambda t^{2}/2}$ is radial. We apply
\eqref{heckebochner}, together with $\mathcal{F}^{-1}g(x)=\mathcal{F}g(-x)$, which only
changes the sign of the odd factor $\mathbf{a}\cdot x$. Then, expanding
\[
J_{N/2}(z)=\sum_{k=0}^{\infty}
\frac{(-1)^{k}(z/2)^{2k+\frac{N}{2}}}{k!\,\Gamma\left(k+\frac{N}{2}+1\right)},
\]
we get, with $\rho=|x|$,
\[
H(\rho)=2\pi\rho^{-\frac{N}{2}}\sum_{k=0}^{\infty}
\frac{(-1)^{k}(\pi\rho)^{2k+\frac{N}{2}}}
{k!\,\Gamma\left(k+\frac{N}{2}+1\right)}
\int_{0}^{\infty}t^{\frac{D_{1}}{2}+N+2k}
e^{-\frac{\lambda t^{2}}{2}}\,dt.
\]
Since $\int_{0}^{\infty}t^{2\kappa-1}e^{-\lambda t^{2}/2}\,dt
=\frac{1}{2}\left(\frac{2}{\lambda}\right)^{\kappa}\Gamma(\kappa)$, the integral inside
is equal to
\[
\frac{1}{2}\left(\frac{2}{\lambda}\right)^{\frac{D_{1}}{4}+\frac{N+1}{2}+k}
\Gamma\left(\frac{D_{1}}{4}+\frac{N+1}{2}+k\right).
\]
Now, using
$(\kappa)_{k}=\Gamma(\kappa+k)/\Gamma(\kappa)$ and
$\Gamma\left(k+\frac{N}{2}+1\right)
=\Gamma\left(\frac{N}{2}+1\right)\left(\frac{N}{2}+1\right)_{k}$, we recognize the
series of
${}_{1}F_{1}\left(\frac{D_{1}}{4}+\frac{N+1}{2},\frac{N}{2}+1;
-\frac{2\pi^{2}\rho^{2}}{\lambda}\right)$, and this gives (iii).

For (iv), the function
$P(\xi)=\mathbf{b}\cdot\left(|\xi|^{2}\mathbf{\Phi}_{2}(\xi)\right)$ is a solid harmonic of
degree $m=2$, and we write
$\left(\mathbf{b}\cdot\mathbf{\Phi}_{2}\right)|\xi|^{\frac{D_{2}}{2}}
e^{-\frac{\lambda|\xi|^{2}}{2}}=P(\xi)f(|\xi|)$ with
$f(t)=t^{\frac{D_{2}}{2}-2}e^{-\lambda t^{2}/2}$. The formula \eqref{heckebochner}
with $m=2$, so that the Bessel index is $\nu=\frac{N}{2}+1$, then gives, after the same
expansion,
\[
H(\rho)=-\left(\frac{2}{\lambda}\right)^{\frac{D_{2}}{4}+\frac{N}{2}+1}
\pi^{\frac{N}{2}+2}
\frac{\Gamma\left(\frac{D_{2}}{4}+\frac{N}{2}+1\right)}
{\Gamma\left(\frac{N}{2}+2\right)}
{}_{1}F_{1}\!\left(\frac{D_{2}}{4}+\frac{N}{2}+1,\ \frac{N}{2}+2;\
-\frac{2\pi^{2}\rho^{2}}{\lambda}\right).
\]
Here the sign comes from $i^{-2}=-1$, and since $P$ is even, replacing $x$ by $-x$ does
not change anything.
\end{proof}

\subsection{Proofs of Theorem \ref{Tcascade} and of Corollary \ref{Ccurlfree}}

\begin{proof}[Proof of Theorem \ref{Tcascade}]
Let $u \in X$. We apply Proposition \ref{Pfourierstab} with $(a,b)=(-2,-1)$ to the chain
\eqref{coupledPoincare} of Theorem \ref{TcoupledPoincare}, following its proof but
without taking the infimum over $\lambda$ in the last step, and we combine the result
with the proof of Theorem \ref{TC}. In this way we obtain, with one single
$\lambda=\mu$, where $\mu$ is as in Lemma \ref{LCKNidentity}, and one single set of
parameters $c,d,\mathbf{a},\mathbf{b}$, the lower bound
\[
\begin{aligned}
\delta_{2}(\widehat{u}) \geq \inf\Bigg[
&\frac{D_{1}}{2}\int_{\mathbb{R}^{N}}
\left|\widehat{u}-\Pi_{0}\right|^{2}|\xi|^{2}\,d\xi
+\frac{D_{2}-D_{1}}{2}\int_{\mathbb{R}^{N}}
\left|\widehat{u}-\Pi_{1}\right|^{2}|\xi|^{2}\,d\xi\\
&+\frac{4-D_{2}}{2}\int_{\mathbb{R}^{N}}
\left|\widehat{u}-\Pi_{2}\right|^{2}|\xi|^{2}\,d\xi
+\frac{D_{3}-4}{2}\int_{\mathbb{R}^{N}}
\left|\widehat{u}-\Pi_{3}\right|^{2}|\xi|^{2}\,d\xi\Bigg],
\end{aligned}
\]
where
\[
\begin{aligned}
\Pi_{0}&=c\,e^{-\frac{\lambda|\xi|^{2}}{2}},
&\Pi_{1}&=\Pi_{0}+|\xi|^{\frac{D_{1}}{2}-1}\mathbf{a}\cdot\xi\,
e^{-\frac{\lambda|\xi|^{2}}{2}},\\
\Pi_{2}&=\Pi_{1}+|\xi|^{\frac{D_{2}}{2}}\mathbf{b}\cdot\mathbf{\Phi}_{2}\,
e^{-\frac{\lambda|\xi|^{2}}{2}},
&\Pi_{3}&=\Pi_{2}+d\,L_{1}^{N/2}\!\left(\lambda|\xi|^{2}\right)
e^{-\frac{\lambda|\xi|^{2}}{2}}.
\end{aligned}
\]
Moreover, by \eqref{dictA} with $\kappa=1$, for any $F \in L^{2}$ we have
\[
(2\pi)^{2}\int_{\mathbb{R}^{N}}
\left|\widehat{u}(\xi)-F(\xi)\right|^{2}|\xi|^{2}\,d\xi
=\int_{\mathbb{R}^{N}}
\left|\nabla\left(u-\mathcal{F}^{-1}F\right)\right|^{2}dx,
\]
and therefore it remains to identify $\mathcal{F}^{-1}\Pi_{j}$. By Lemma
\ref{LinvFourier}, writing $\beta=2\pi^{2}/\lambda>0$ and absorbing the nonzero
constants $\left(\frac{2\pi}{\lambda}\right)^{N/2}$, $A(\lambda)$ and $B(\lambda)$ into
the free parameters, we obtain
\[
\begin{aligned}
\mathcal{F}^{-1}\left(c\,e^{-\frac{\lambda|\xi|^{2}}{2}}\right)
&=\alpha e^{-\beta|x|^{2}},\\
\mathcal{F}^{-1}\left(|\xi|^{\frac{D_{1}}{2}-1}\mathbf{a}\cdot\xi\,
e^{-\frac{\lambda|\xi|^{2}}{2}}\right)
&=\Theta_{1}(x;\beta)\,\mathbf{a}\cdot x,\\
\mathcal{F}^{-1}\left(|\xi|^{\frac{D_{2}}{2}}\mathbf{b}\cdot\mathbf{\Phi}_{2}\,
e^{-\frac{\lambda|\xi|^{2}}{2}}\right)
&=\Theta_{2}(x;\beta)\,|x|^{2}\,\mathbf{b}\cdot\mathbf{\Phi}_{2}(x),
\end{aligned}
\]
where $\Theta_{1}$ and $\Theta_{2}$ are as in \eqref{Theta}. For the last profile, Lemma
\ref{LinvFourier} (i)--(ii) together with $L_{1}^{N/2}(t)=\frac{N}{2}+1-t$ give
\[
\begin{aligned}
\mathcal{F}^{-1}\left(L_{1}^{N/2}\!\left(\lambda|\xi|^{2}\right)
e^{-\frac{\lambda|\xi|^{2}}{2}}\right)(x)
&=\left(\frac{2\pi}{\lambda}\right)^{N/2}
\left[\left(\frac{N}{2}+1\right)-N+\frac{4\pi^{2}}{\lambda}|x|^{2}\right]
e^{-\beta|x|^{2}}\\
&=\left(\frac{2\pi}{\lambda}\right)^{N/2}\mathcal{L}(x;\beta),
\end{aligned}
\]
where $\mathcal{L}$ is as in \eqref{Lprofile}. Absorbing again the nonzero normalizing
constants into $\alpha,\tau,\mathbf{a},\mathbf{b}$, we get
$\mathcal{F}^{-1}\Pi_{j}=w_{j}(\cdot;P)$ with
$P=(\alpha,\tau,\mathbf{a},\mathbf{b},\beta)$ as in \eqref{wj}, and this proves
\eqref{coupled}. We stress that no reality assumption on $u$ has been used here, because
Theorem \ref{TcoupledPoincare} and Proposition \ref{Pfourierstab} hold for
complex-valued functions and complex parameters.
Finally, if $u$ is real-valued, then
$w_{j}(\cdot;\overline{P})=\overline{w_{j}(\cdot;P)}$, where $\overline{P}$ is the
tuple obtained by conjugating $\alpha,\tau,\mathbf{a},\mathbf{b}$ and keeping $\beta$.
Therefore, denoting by $P_{\mathrm{re}}$ and $P_{\mathrm{im}}$ the tuples of the real
and of the imaginary parts of $\alpha,\tau,\mathbf{a},\mathbf{b}$, the rate $\beta$
being kept in both, we have
\[
\left\|\nabla\left(u-w_{j}(\cdot;P)\right)\right\|_{2}^{2}
=\left\|\nabla\left(u-w_{j}(\cdot;P_{\mathrm{re}})\right)\right\|_{2}^{2}
+\left\|\nabla w_{j}(\cdot;P_{\mathrm{im}})\right\|_{2}^{2}
\]
for all $j$ at the same time, and hence the infimum can be restricted to the real $P$.
\end{proof}

\begin{proof}[Proof of Corollary \ref{Ccurlfree}]
A smooth curl-free vector field on $\mathbb{R}^{N}$ is a gradient, that is,
$\mathbf{U}=\nabla u$. Then, after an integration by parts,
$\int_{\mathbb{R}^{N}}|\nabla\mathbf{U}|^{2}dx=\int_{\mathbb{R}^{N}}|\Delta u|^{2}dx$,
and hence $\delta_{\mathrm{CF}}(\mathbf{U})=\delta_{\mathrm{S}}(u)$ and
$\|\mathbf{U}-\nabla w\|_{2}^{2}=\int_{\mathbb{R}^{N}}|\nabla(u-w)|^{2}dx$ for every
$w$. Therefore \eqref{curlcoupled} is exactly \eqref{coupled}. The sharpness of the
constant $\frac{D_{1}}{2}$ follows from \cite{DoLL26}; see also \cite{HY25}.
\end{proof}

%---------------------------------------------------------------------------------------------
\subsection*{Acknowledgements}
 
N. Lam was partially supported by an NSERC Discovery Grant. G. Lu were partially supported by grants from the Simons Foundation. V. H. Nguyen was supported
by the Vietnam National Foundation for Science and Technology Development (NAFOSTED)
[101.02-2025.33].

%---------------------------------------------------------------------------------------------

\end{document}